\newtheorem{theo}{Theorem}
\newtheorem{lem}{Lemma}[section]
\newtheorem{defi}{Definition}
\newtheorem{pro}{Proposition}[section]
\newtheorem{coro}{Corollary}[section]
\newtheorem{remark}{Remark}[section]
\def\Xint#1{\mathchoice
   {\XXint\displaystyle\textstyle{#1}}%
   {\XXint\textstyle\scriptstyle{#1}}%
   {\XXint\scriptstyle\scriptscriptstyle{#1}}%
   {\XXint\scriptscriptstyle\scriptscriptstyle{#1}}%
   \!\int}
\def\XXint#1#2#3{{\setbox0=\hbox{$#1{#2#3}{\int}$}
     \vcenter{\hbox{$#2#3$}}\kern-.5\wd0}}
\def\dashint{\Xint-}
\def\({\left(}
\def\){\right)}
\def\1{\mathbf{1}}
\def\D{\displaystyle}
\def\dt0{{{\frac{d}{dt}}_{|t=0}}}
\def\Et{{\widetilde E}}
\def\Eo{{E_\ep^{\om_0}}}
\def\ep{\varepsilon}
\def\E{{E_\ep^\om}}
\def\En{{E_{\ep_n}^\om}}
\def\eam{{F_\ep^\om}}
\def\FH{{\widehat{F}_\ep^\om}}
\def\form{{ f^{\om}_{R,m}}}
\def\formi{{ f^{\om}_{R,m_i}}}
\def\fb{{\underline f^{\om,x}}}
\def\fbe{{\underline f^{\om+\frac x\ep,x}}}
\def\fdm{{\mathcal{L}}}
\def\fe{{F_\ep}}
\def\fy{{f_\ep^{\om+y,x}}}
\def\F{{\mathcal F}}
\def\f{{f_\ep^{\om,x}}}
\def\gdm{{\mathcal{G}}}
\def\hal{\frac{1}{2}}
\def\indic{{\mathbf 1}}
\def\lag{{e_\ep^{\om,x}}}
\def\lip{\text{Lip}}
\def\loc{{\text{loc}}}
\def\lp{{L^2_{\text{loc}}}}
\def\Lp{{L^p_{\text{loc}}}}
\def\mt{{\tilde m}}
\def\mn{\mathbb{N}}
\def\mr{\mathbb{R}}
\def\mz{\mathbb{Z}}
\def\nab{\nabla}
\def\Om{\Omega}
\def\om{\omega}
\def\Pe{{P_\ep^\om}}
\def\Pey{{P_\ep^{\om+y}}}
\def\Pen{{P_{\ep_n}^\om}}
\def\Peny{{P_{\ep_n}^{\om+y}}}
\def\Poy{{P^{\om+y}}}
\def\Po{{P^\om}}
\def\Pox{{P^\om_x}}
\def\Qe{{Q_\ep^\om}}
\def\Qeb{{\underline Q_\ep^\om}}
\def\Qoy{{Q^{\om+y}}}
\def\Qo{{Q^\om}}
\def\sd{\bigtriangleup}
\def\sm{\setminus}
\def\tog{{\overset{\Gamma}{\to}}}
\def\ue{{u_\ep}}
\def\uen{{u_{\ep_n}}}
\def\ve{{v_\ep}}
\def\tve{{\tilde v_\ep}}
\def\tuei{{\tilde u_{\ep,i}}}
\def\tu{{\tilde u}}
\def\vp{\varphi}
\def\xe{{\frac x\ep}}
\def\txi{{\tilde x_i}}
\DeclareMathOperator{\bv}{BV}
\title{A Two scale $\Gamma$-convergence  Approach for  Random Non-Convex Homogenization}
\author{Leonid Berlyand, Etienne Sandier and Sylvia Serfaty}
\address[Leonid Berlyand]{Pennsylvania State University\\
University Park, PA 16802, USA.}
\email{berlyand@math.psu.edu}
\address[Etienne Sandier]{Universit\'e Paris-Est\\
LAMA -- CNRS UMR 8050,\\
61, Avenue du G\'en\'eral de Gaulle, 94010 Cr\'eteil, France.}
\email{sandier@u-pec.fr}
\address[Sylvia Serfaty]{Sorbonne Universit\'es, UPMC Univ. Paris 06, CNRS, UMR 7598, Laboratoire Jacques-Louis Lions, 4, place Jussieu 75005, Paris, France.
 \newline \& Institut Universitaire de France \newline \& 
Courant Institute, New York University, 251 Mercer st, New York, NY 10012, USA.} 
\email{serfaty@ann.jussieu.fr}
\begin{document}
\begin{abstract}
We propose an abstract framework for the homogenization of random functionals which may contain  non-convex terms,  based on  a two-scale $\Gamma$-convergence approach and a definition of  Young measures on micropatterns which encodes the profiles of the oscillating functions and of functionals. Our abstract result is a lower bound for such energies in terms of a cell problem (on large expanding cells) and the $\Gamma$-limits of the functionals at the microscale. We show that our method allows to retrieve the results of Dal Maso and Modica in the well-known case of  the stochastic homogenization of convex Lagrangians. As an application, we also show how our method allows to stochastically homogenize a variational problem introduced and studied by Alberti and M\"uller,  which is a paradigm of a problem where an additional mesoscale arises naturally due to the non-convexity of the singular perturbation (lower order)  terms in the functional.
\end{abstract}
\maketitle

\noindent
{\bf keywords: }  stochastic homogenization, $\Gamma$-convergence, integral functionals
\\
{\bf MSC classification: }35B27, 60H25, 35J20

\section{Introduction}
The goal of this work is to develop an abstract framework for two-scale $\Gamma$-convergence of random non-convex functionals, and to show how this framework applies to specific problems in random homogenization. 
 
Random homogenization of convex functionals was studied in the seminal papers of Dal Maso and Modica \cite{dalmasopaper,dmm}, which introduced the fundamental idea of using sub-additivity combined with the (sub-additive) ergodic theorem, thus generalizing the homogenization of random linear elliptic equations (which correspond to  quadratic energies), an issue  which has attracted significant attention since the seminal papers \cite{kozlov,pv}.
For recent developments and state of the art  in the  homogenization of convex energy functionals, one may see  \cite{armstrongsmart} (which develops a quantitative approach) and \cite{duerinckxgloria} (which contains extensions to nonconvex and unbounded situations) and references therein.
Here we are rather interested in the homogenization of variational problems, in particular, allowing for non-convex lower order terms (possibly singular perturbations of the leading order terms) in the energy. 

It is well known in the mathematical theory of  homogenization that the non-convexity of the Lagrangian presents a major difficulty in applying the usual homogenization techniques to nonlinear problems  (see  e.g.~\cite{allaire,braidesf,muller87}).  We recall the issue in the simplest context of periodic homogenization. Then the computation of the effective coefficients   leads to a so-called expanding cell problem (whose size $R$ goes to infinity). For convex problems, this cell problem reduces to a problem on a single periodicity cell due to the uniqueness of the minimizers. However, for non-convex problems, non-uniqueness implies that  minimizers with multiple of the basic period may exist and such a reduction is no longer possible.  Moreover,
analysis of the oscillating test functions that are used in establishing convergence shows that a new {\it mesoscopic scale} may arise in the process of convexification (see, e.g.,  the work of M\"uller  \cite{muller87}, where the phenomenon of
a  mesoscale arising in minimizers of non-convex problems was demonstrated in a vectorial  elasticity  problem). This lies at the heart of the major computational challenges in the numerical analysis of such problems. The heuristic idea behind the concept of mesoscale can be explained as follows. Applying the $\Gamma$-convergence method one can see that the system may try to reach multiple local equilibrium positions due to non-uniqueness but the average
macroscopic gradient condition penalizes any deviation from the linear behavior. A balance between these two trends results in the convexification of the energy and the rise of a new mesoscale much larger than the microscopic period (cf. \cite{muller}). Note that the  homogenization of convex random (stationary and ergodic) problems results in a cell problem defined on the entire space and therefore  does not give rise to a mesoscale.

Let us now  recall briefly the  variational setting of convex nonlinear random homogenization problem as proposed  by Dal Maso-Modica in \cite{dmm,dalmasopaper}. Consider a functional 
\begin{equation}\label{dalmasosetting} 
F_\ep(v, G)= \dashint_G \fdm( x/\ep, \nabla v(x)) dx,
\end{equation}
 where $\fdm: \mathbb{R}^n \times \mathbb{R}^n \to \mathbb{R}_+ $ is a random Lagrangian which satisfies the usual growth and convexity  conditions.

The homogenization problem  can be stated as the question of determining the $\Gamma$-limit of this functional as $\ep \to 0$. Assuming stationarity and ergodicity,  it is proved in  \cite{dmm, dalmasopaper} that the $\Gamma$-limit of $F_\ep$ is an integral functional of the same type as  \eqref{dalmasosetting}, where the limiting (homogenized)   Lagrangian is  computed as the expectation of  a minimization problem for a local Lagrangian over an expanding  cell.   

For more general Lagrangians, for instance of the form $f_\ep(x, v, \nabla v)$, one can no longer expect the $\Gamma$-limit to be of the same integral form.  A classical example  due to Modica-Mortola \cite{mm}  features the convergence of  scalar soft interface models of the integral functional form to a sharp interface model that is no longer an integral functional. Another  striking example of this phenomenon was  studied by Müller in \cite{muller} and generalized by Alberti and Müller in  \cite{am}. In particular, in \cite{am} the important  notion of the Young measure on micropatterns was introduced for  two-scale 
$\Gamma$-convergence  and applied to the  following non-convex Lagrangian
\begin{equation}\label{am_Lagrangian} 
 L(x, v):=\ep^2 {v''}^2 +W(v') +a(x) v^2,
 \end{equation}
 where $W$ is the standard double-well potential $W(x) = (1-|x|^2)^2$.   A crucial feature of the Lagrangian $L(x,v)$ is the non-convexity of  the double well potential $W(t):=(1-t^2)^2$, which is mainly responsible for  the more complex form of the $\Gamma$-limit, the latter being  expressed  in terms of jumps of BV functions rather than by an integral functional. 

In this work we  extend, in  a way inspired by the abstract method in  \cite{ss1,ss2} (itself following a suggestion of Varadhan), the notion of Young measure on micropatterns introduced  in \cite{am}. While the standard Young measure encodes the frequency of taking certain values of  an oscillating sequence of functions,  the Young measure on micropatterns additionally encodes the profiles (or shape of the graph) of the oscillating functions at a given  scale. Note that this scale is determined by the problem.  In our work we  incorporate to the Young measure on micropatterns  the profiles of a sequence of  {\em oscillating functionals} that is, in the particular case of integral functionals, the profiles of the  oscillating coefficients of these functionals. For instance, in the case of a Lagrangian of the type $a_\ep(x)|\nabla u(x)|^2$ the profiles of  the oscillating functionals $\int a_\ep(x)|\nabla u(x)|^2$ reduce to the profiles of the oscillating functions $a_\ep$. 

However, using the natural action of translations on a function space and the idea from Dal Maso-Modica of metrizing the space of functionals endowed with the topology of $\Gamma$-convergence, defining the Young measure of oscillating functionals can be done in an  abstract setting without referring to a specific form of the functionals. This leads to what we consider to be the natural lower bound for variational problems where minimizers and  coefficients oscillate on the same scale.
This is of course completely natural for linear or more generally for convex problems, due to uniqueness. However here we consider non-convex nonlinear problems where minimizers can develop their own new scale of oscillations (see again \cite{ am}).

Loosely speaking, our main result (Theorem~\ref{th2}) may then be explained as follows: we are able to extend the lower-bound of Dal Maso-Modica to nonconvex local Lagrangians, whose coefficients may oscillate randomly. This lower bound is computed as follows: first we compute the $\Gamma$-limit of the local Lagrangian and then the desired lower bound is expressed in terms of a cell problem in the entire space for this $\Gamma$-limit. Note that the upper bound is computed by constructing appropriate test functions and  is usually  problem-specific, which is why we do not address it in a general abstract framework but  rather provide an example.

As a first application we  recover the theory of Dal Maso-Modica for random convex homogenization using our  abstract approach (Theorem~\ref{dalmasoth}).  Then we show how our framework applies to   a random version  of the one-dimensional model studied by Alberti-Müller in \cite{am}. In this second application, the techniques of Dal Maso-Modica would not apply due to non-convexity.   On the other hand the framework of \cite{am} developed for non-convex problems would not suffice because of the randomness --- or more precisely because of the oscillations of the functionals (brought in due to randomness) at the same scale as the oscillating functions (in fact, this direction is mentioned in \cite{am} at the end of \S~6.2 as ``particularly interesting"). Our approach is able to handle both randomness and non-convexity. Moreover,  once a matching upper bound is derived, it  provides the leading term for the asymptotics  of  the minimal energy for this model (Theorem~\ref{minam}).  
The  $\Gamma$-convergence itself, as is well-known, follows the same lines but can be considerably more technically involved. We believe that a number of problems with  functionals featuring oscillations either deterministic or random could be studied using our approach.


The plan of the paper is the following. In Section~\ref{recast} we describe the  abstract setting in which the Young measure on micropatterns and functionals is defined. Here we also  give the abstract version of the $\Gamma$-convergence lower bound in this setting. In Section~\ref{convex} we show how the theory of Dal Maso-Modica  \cite{dmm, dalmaso}  can be recovered in our framework. Finally in Section~\ref{nonconvex} we analyze the $\ep\to 0$ limit of a generalization of the Alberti-Müller functional  \eqref{am_Lagrangian} to a random setting by applying our approach. 



\section{Abstract Setting}

\subsection{$\Gamma$-convergence of extended functionals}\label{recast}

Hereafter, $(X,d_X)$ is a Polish space which should be thought of as a function space to which the unknown function $u$ belongs, $\Om$ is a probability space whose generic element is denoted by $\om$ and on which the probability measure is simply denoted by $d\om$. 

The space $\mr^n$ acts on $\Om$ by measurable isomorphisms, this action is denoted by $(\om,y)\to\om+y$. It also acts on $X$ and this  is  denoted by $(u,y)\to \theta_y u$, where $u\in X$ and $y\in\mr^n$ ($\theta_y u$ should be thought of as $u(\cdot+y)$ if $X$ is a space of functions defined on $\mr^n$). We assume this action is continuous in $u$ uniformly with respect to $y\in K$ for any compact subset $K$ of $\mr^n$, and  also continuous in $y$. 

As mentioned above we wish to relax the convexity assumption of \cite{dmm, dalmasopaper} and allow for lower-order terms. This requires the introduction of a more general class than integral functionals, which will be closed with respect to $\Gamma$-convergence. For this we replace the notion of Lagrangian with the notion of {\em integrand} --- which is a functional ---  analogous to the notion introduced in \cite{am}.

An {\em integrand} is a map from $X\times\mr^n\to \mr_+\cup\{+\infty\}$, denoted by $f:(u,y)\to f(u,y)$. To see how this relates to \eqref{dalmasosetting}  let us write the functional \eqref{dalmasosetting} in terms of the rescaled function $u(y) = \ep^{-1} v(\ep y)$ as 
\begin{equation}\label{4} F_\ep(v,G) = \dashint_{G} \fdm(x/\ep,\nabla u(x/\ep))\,dx = \dashint_{G/\ep} \fdm(y,\nabla u(y))\,dy.
\end{equation}
Next we approximate $\fdm(x/\ep,\nabla u(x/\ep))$ by its  local average: $$
F_\ep(v,G) \approx   \dashint_{G}\dashint_{B(x/\ep,1)} \fdm(z,\nabla u(z))\,dz\,dx.
$$
The approximation property easily follows from Fubini's theorem. Now we define an integrand $f$   as follows
\begin{equation}\label{5} f(u, y) = \dashint_{B(y,1)} \fdm(z,\nabla u(z))\,dz,
\end{equation}
and we find 
\begin{equation}\label{6}
F_\ep(v,G) \approx   \dashint_{G} f(u, x/\ep)\,dx.
\end{equation}
This procedure was already present in \cite{am}, and used in the abstract method of \cite{ss1,ss2}.
Note that we have now replaced the Lagrangian, which is a function on $\mr^n\times\mr^n$ by an integrand, which is a functional on $X\times \mr^n$, where $X$ is a function space. This integrand is  defined as the local average of the Lagrangian. This rewriting allows us to consider very general problems, e.g. non-convex, and also to establish a two-step $\Gamma$-convergence procedure as in \cite{am}, where first the $\Gamma$-limit of the integrand (which is trivial in the case of \eqref{5}) is computed, followed by computing the $\Gamma$-limit of the full energy. 

We define the topological structure on our space of functionals following \cite{dmm, dalmasopaper} (see also the book by Dal Maso \cite{dalmaso}). We will say that a sequence of functionals $\{f_n:X\times\mr^n\to \overline\mr_+\}_n$ $\Gamma$-converges to a functional $f$ if for any $(u,y)\in X\times \mr^n$ we have 
$$\lim_{\ep\to 0} \limsup_{n\to +\infty} \inf_{B_\ep(u,y)} f_n = \lim_{\ep\to 0} \liminf_{n\to +\infty} \inf_{B_\ep(u,y)} f_n = f(u,y).$$
In our case, where the topology of $X\times\mr^n$ has a countable basis, this is equivalent to the following sequential characterization.

\begin{itemize}
\item[i)] For any convergent sequence $(u_n,y_n)\to (u,y)$, it holds that $\liminf_n f_n(u_n,y_n)\ge f(u,y).$
\item[ii)] For any $(u,y)$ there exists a sequence $(u_n,y_n)$ converging to $(u,y)$ such that \\ $\limsup_n f_n(u_n,y_n)\le f(u,y)$.
\end{itemize}
This is the usual definition of $\Gamma$-convergence, {\em with respect to the couple} $(u,y)$. 

Note that the topology of $\Gamma$-convergence is not even separated. However, when restricted to suitable sets of functionals, it becomes metrizable (see \cite{dalmaso} and the sketch below).

The topology of $\Gamma$-convergence is compatible with the action of $\mr^n$ on $X\times\mr^n$, defined by 
\begin{equation}\label{act}\theta_y f(u,z)= f(\theta_{-y} u, z+y).\end{equation}
\begin{lem}\label{gammatrans}
If $f$ is a lower semicontinuous functional, then $y\mapsto \theta_y f$ is continous.
\end{lem}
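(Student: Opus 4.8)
The plan is to verify the sequential characterization i)--ii) of $\Gamma$-convergence recalled above. Since $\mr^n$ is metrizable, hence first countable, continuity of the map $y\mapsto\theta_y f$ is equivalent to its sequential continuity, so I would fix a sequence $y_k\to y_0$ in $\mr^n$ and show that $\theta_{y_k}f$ $\Gamma$-converges to $\theta_{y_0}f$. (Each $\theta_y f$ is again lower semicontinuous, being the composition of $f$ with the continuous map $(u,z)\mapsto(\theta_{-y}u,z+y)$, so the statement is meaningful.)

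For the lower bound i), I would take an arbitrary convergent sequence $(u_k,z_k)\to(u,z)$ in $X\times\mr^n$ and unwind \eqref{act} as $\theta_{y_k}f(u_k,z_k)=f(\theta_{-y_k}u_k,z_k+y_k)$. The key point is that $(\theta_{-y_k}u_k,z_k+y_k)\to(\theta_{-y_0}u,z+y_0)$: the second coordinate is immediate, and for the first I would use $d_X(\theta_{-y_k}u_k,\theta_{-y_0}u)\le d_X(\theta_{-y_k}u_k,\theta_{-y_k}u)+d_X(\theta_{-y_k}u,\theta_{-y_0}u)$, controlling the first summand by the continuity of the action in $u$ uniform over the compact set $\{-y_k\}_k\cup\{-y_0\}$ and the second by the continuity of $y\mapsto\theta_y u$. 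Lower semicontinuity of $f$ then yields $\liminf_k\theta_{y_k}f(u_k,z_k)\ge f(\theta_{-y_0}u,z+y_0)=\theta_{y_0}f(u,z)$, which is i).

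For the upper bound ii), given $(u,z)$ I would use an explicit recovery sequence along which the functionals are in fact constant: set $u_k:=\theta_{y_k-y_0}u$ and $z_k:=z+y_0-y_k$. Because $\theta$ is an action one has $\theta_{-y_k}u_k=\theta_{-y_0}u$ and $z_k+y_k=z+y_0$, so $\theta_{y_k}f(u_k,z_k)=f(\theta_{-y_0}u,z+y_0)=\theta_{y_0}f(u,z)$ for all $k$, hence $\limsup_k\theta_{y_k}f(u_k,z_k)=\theta_{y_0}f(u,z)$; moreover $(u_k,z_k)\to(u,z)$ since $z_k\to z$ trivially and $u_k=\theta_{y_k-y_0}u\to\theta_0 u=u$ by continuity of $y\mapsto\theta_y u$ at the origin. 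This proves ii) and hence the lemma.

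The whole argument is bookkeeping with the group action; the reduction to sequences and the upper bound are routine. The only point requiring some care is the convergence $\theta_{-y_k}u_k\to\theta_{-y_0}u$ in step i): there the translation vector and the base point vary simultaneously, which is exactly the situation the \emph{uniform}-in-compact-$y$ continuity of the action in $u$ is designed to handle, separate continuity in $u$ and in $y$ not obviously being enough. The lower semicontinuity hypothesis on $f$ is used only (and crucially) to pass from that convergence to the $\liminf$ inequality in i).
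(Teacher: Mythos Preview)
Your proof is correct and follows essentially the same approach as the paper: verify the sequential lower and upper bound characterizations of $\Gamma$-convergence, using the uniform-in-$y$ continuity of the action in $u$ together with continuity in $y$ for the lower bound, and an explicit constant-value recovery sequence $u_k=\theta_{y_k-y_0}u$, $z_k=z+y_0-y_k$ for the upper bound. The paper merely reduces first to $y_0=0$, which is cosmetic.
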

\begin{proof} Clearly it suffices to prove continuity at $y=0$. Thus we consider a sequence $y_n$ converging to zero and prove that $\theta_{y_n} f$ $\Gamma$-converges to $f$.

For the lower bound part, assume $(u_n,z_n)$ converges to $(u,z)$. Then, from  definition \eqref{act}, $\theta_{y_n} f(u_n,z_n) = f\(\theta_{-y_n} u_n, z_n+y_n\)$. From the uniform continuity of $\theta_y u$ with respect to $u$ and the continuity with respect to $y$ we have $\theta_{-y_n} u_n \to u$ while $z_n+y_n \to z$, hence from the lower semicontinuity of $f$ we find 
$$ \liminf_n \theta_{y_n} f(u_n,z_n)\ge f(u,z).$$
For the upper bound part, assume $(u,z)\in X\times \mr^n$. Then $u_n:= \theta_{y_n} u$ converges to $u$, $z_n = z - y_n$ converges to $z$, and for any $n$ we have 
$$ f(u,z) =  \theta_{y_n} f(u_n,z_n).$$
\end{proof}

This topology is compact and  metrizable on suitable sets of functionals. First we define the Yosida regularization of $f$ with parameter $\lambda$ to be the functional
\begin{equation}\label{yosida}R_\lambda f (u,y) := \inf_{(v,z)\in X\times \mr^n} g(v,z),\quad\text{where}\quad g(v,z) = f(v,z) + \lambda\(d_X(u,v) + |y-z|\).\end{equation}

Then we say a functional $f$ is {\em coercive} if  for any function $C:(0,+\infty)\to (0,+\infty)$, the set $\{u\in X\mid \forall R>0,\ \dashint_{B(0,R)} f(u,y)\,dy\le C(R)\}$ is compact in $X$.

Then we have (see \cite[Theorem~17.14]{dalmaso}):
\begin{pro} \label{compact} Let $\F$ be a family of lower semicontinuous functionals $f$ which are bounded below by a coercive lower semicontinuous functional $f_0$. Assume that for any $M, \lambda>0$, any $u\in X$ and any $y\in\mr^n$ there exists a  compact subset $K_{M,u,y,\lambda}$ of $X\times \mr^n$, such that if $R_\lambda f (u,y)\leq M$, then  the infimum defining $R_\lambda f (u,y)$ is in fact the same as the infimum on $K_{M,u,y,\lambda}$.

Then the topology of $\Gamma$-convergence is metrizable on $\F$, and $\F$ is compact.
\end{pro}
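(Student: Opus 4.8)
The plan is to deduce Proposition~\ref{compact} from the general metrization and compactness theorem for $\Gamma$-convergence in \cite[Theorem~17.14]{dalmaso} by verifying its hypotheses in our abstract setting, where the ambient space is $X\times\mr^n$. Recall that the classical statement says: on a family $\F$ of lower semicontinuous functions on a separable metric space $Y$ which are uniformly bounded below by a fixed function whose sublevel sets are compact (a ``coercive'' lower bound), the topology of $\Gamma$-convergence is metrizable and $\F$ is sequentially compact; moreover if each $f\in\F$ satisfies a uniform ``local compactness of the infimum in the Yosida regularization'' property, then the $\Gamma$-limit of any sequence in $\F$ again lies in a set with the same property, so compactness is genuine. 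Our task is thus essentially a translation exercise: identify $Y=X\times\mr^n$ with its product metric, check it is Polish (hence separable metric) since $X$ is Polish and $\mr^n$ is, and check that our notion of ``coercive functional'' on $X$ (via averaged sublevel sets $\dashint_{B(0,R)}f(u,y)\,dy\le C(R)$) indeed produces a lower bound $f_0$ on $X\times\mr^n$ whose sublevel sets on $X\times\mr^n$ are relatively compact.

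First I would spell out the metric: $d_{X\times\mr^n}((u,y),(v,z))=d_X(u,v)+|y-z|$, which is exactly the quantity appearing inside the Yosida regularization \eqref{yosida}, so $R_\lambda f$ is literally the Yosida--Moreau regularization of $f$ on the metric space $(X\times\mr^n, d_{X\times\mr^n})$ with parameter $\lambda$. Second, I would verify the coercivity of the lower bound: we are told each $f\in\F$ satisfies $f\ge f_0$ with $f_0$ coercive lower semicontinuous. The point is that if $f_0$ is coercive on $X$ in the sense defined above, then the function $(u,y)\mapsto f_0(u,y)$ on $X\times\mr^n$ has the property that for each $t$ the set $\{(u,y): f_0(u,y)\le t\}$ is contained in (the closure of) a product $K_t\times\overline{B(0,1)}$ after a harmless thickening --- one extracts from the averaged bound, via lower semicontinuity and Fatou/Fubini, that $u$ must lie in the compact set appearing in the definition of coercivity while $y$ is bounded. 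Strictly, one should pass through a modified lower bound $\tilde f_0(u,y) = \inf_{|z|\le R_0} f_0(\theta_z u, y-z+\ldots)$ type construction or simply invoke the version of the Dal Maso theorem phrased with ``mildly coercive'' families; in any case the content is that sublevel sets of the effective lower bound on $X\times\mr^n$ are relatively compact.

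Third, the local-compactness hypothesis is handed to us directly: for any $M,\lambda>0$ and any $(u,y)$ there is a compact $K_{M,u,y,\lambda}\subset X\times\mr^n$ such that whenever $R_\lambda f(u,y)\le M$ the infimum defining $R_\lambda f(u,y)$ is attained (as an infimum) over $K_{M,u,y,\lambda}$. This is precisely condition needed to invoke \cite[Theorem~17.14]{dalmaso} to conclude both that the $\Gamma$-convergence topology on $\F$ is metrizable and that $\F$ is compact. So the proof is: set $Y=X\times\mr^n$ with the sum metric; note $Y$ is separable metric; observe $R_\lambda f$ is the Moreau--Yosida regularization on $Y$; check that the uniform coercive lower bound $f_0$ together with the given uniform local-compactness of the regularization infima are exactly the hypotheses of \cite[Theorem~17.14]{dalmaso}; apply it.

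The main obstacle, and the only place requiring genuine care rather than bookkeeping, is the coercivity translation: reconciling our definition of a coercive functional on $X$ --- stated through \emph{averages} of $f_0(u,\cdot)$ over balls $B(0,R)$ for \emph{all} $R$ --- with the form of coercive lower bound that \cite[Theorem~17.14]{dalmaso} wants on the product space $X\times\mr^n$, namely one with compact sublevel sets. One must check that boundedness of the average $\dashint_{B(0,R)}f_0(u,y)\,dy$ forces pointwise-in-$y$ (or at least compact-in-$y$) control after accounting for the $\mr^n$ factor, using lower semicontinuity of $f_0$ and the continuity of the translation action (Lemma~\ref{gammatrans} and the standing assumptions on $\theta_y$); this is where one invokes that a lower semicontinuous function with small average on a ball cannot be large on a set of positive measure, combined with translation to upgrade ``small somewhere'' to ``controlled on a compact set.'' Once this identification is made the rest is a direct citation of \cite[Theorem~17.14]{dalmaso}.
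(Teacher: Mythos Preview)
Your approach has a genuine gap at exactly the point you flag as the ``main obstacle'': the translation of coercivity. The functionals live on $X\times\mr^n$, and the sublevel sets $\{(u,y):f_0(u,y)\le t\}$ are \emph{never} relatively compact in this product, because the $\mr^n$ factor is unbounded and there is no mechanism forcing $y$ into a compact set (think of the Dal Maso--Modica example \eqref{fo}, where $f_0(u,y)=c_0\dashint_{B(y,1)}|\nabla u|^p$: translating $(u,y)$ simultaneously leaves $f_0$ invariant, so sublevel sets are invariant under an unbounded group action). Your proposed ``harmless thickening into $K_t\times\overline{B(0,1)}$'' cannot work, and the sketched $\tilde f_0$ construction does not repair this. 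So the hypotheses of \cite[Theorem~17.14]{dalmaso}, read as a black box requiring an equi-coercive lower bound on the ambient space, simply are not satisfied here.

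The paper does not apply Theorem~17.14 directly; it \emph{adapts its proof}. The substitute for equi-coercivity on $X\times\mr^n$ is precisely the $(M)$ property: the compact sets $K_{M,u,y,\lambda}$ localize the infimum defining the Yosida regularization, which is all one needs. Concretely, the paper proves two lemmas: (i) if $f_n\tog f$ in $\F$ then $R_\lambda f_n\to R_\lambda f$ pointwise (here the $(M)$ property is used to extract a convergent minimizing sequence for $R_\lambda f_n(u_0,y_0)$); (ii) conversely, pointwise convergence of $R_\lambda f_n$ for all $\lambda\in\mn^*$ implies $f_n\tog f$ (this direction is elementary). These lemmas show that the explicit metric built from $R_\lambda f$ at a countable dense set of points metrizes $\Gamma$-convergence on $\F$. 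Compactness then follows from the general fact (\cite[Theorem~8.5]{dalmaso}) that on a second-countable space every sequence of functionals has a $\Gamma$-convergent subsequence. Note that the averaged-coercivity of $f_0$ plays no role in this argument; it is used later (Proposition~\ref{tight}) for tightness of the Young measures, not here.
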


This motivates the following

\begin{defi} \label{defiM} We say that a family of functionals  has the (M) property  if there exist a coercive lower semicontinuous functional $f_0$ and a family of compact subsets $K_{M,u,y,\lambda}$ of $X\times \mr^n$,  such that for any $f$ in the family, $f_0\le f$ and  $R_\lambda f (u,y)<M$ implies that   the infimum defining $R_\lambda f (u,y)$ is in fact the same as the infimum on $K_{M,u,y,\lambda}$.\end{defi}

The proof of Proposition~\ref{compact} relies on two lemmas.

\begin{lem} Assume $\F$ satisfies the hypotheses of Proposition~\ref{compact}.  If $\{f_n\}$ is a sequence in $\F$ which $\Gamma$-converges to $f$, then the Yosida regularizations $R_\lambda f_n$ converge pointwise to $R_\lambda f$ for any $\lambda>0$.
\end{lem}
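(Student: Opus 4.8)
The plan is to establish pointwise convergence of the Yosida regularizations by proving two inequalities: $\liminf_n R_\lambda f_n(u,y) \ge R_\lambda f(u,y)$ and $\limsup_n R_\lambda f_n(u,y) \le R_\lambda f(u,y)$, for each fixed $(u,y)\in X\times\mr^n$ and $\lambda>0$. Throughout I would write $g_n(v,z) = f_n(v,z) + \lambda(d_X(u,v)+|y-z|)$ and $g(v,z)=f(v,z)+\lambda(d_X(u,v)+|y-z|)$, so that $R_\lambda f_n(u,y)=\inf g_n$ and $R_\lambda f(u,y)=\inf g$. Note that all $g_n$ and $g$ are bounded below by $f_0(v,z)+\lambda(d_X(u,v)+|y-z|)$, which is itself coercive and lower semicontinuous and goes to $+\infty$ as $(v,z)$ leaves compact sets, so the infima are finite (or $+\infty$ trivially) and, where finite, essentially attained on a compact set.

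\emph{Upper bound.} Fix $\delta>0$ and pick $(v,z)$ with $g(v,z)\le R_\lambda f(u,y)+\delta$; in particular $f(v,z)<+\infty$. By the $\Gamma$-convergence of $f_n$ to $f$ (property (ii), the recovery-sequence condition), there is a sequence $(v_n,z_n)\to(v,z)$ with $\limsup_n f_n(v_n,z_n)\le f(v,z)$. Since $d_X(u,\cdot)$ and $|y-\cdot|$ are continuous, $g_n(v_n,z_n)=f_n(v_n,z_n)+\lambda(d_X(u,v_n)+|y-z_n|)\to$ has $\limsup \le g(v,z)$. Hence $\limsup_n R_\lambda f_n(u,y)\le \limsup_n g_n(v_n,z_n)\le g(v,z)\le R_\lambda f(u,y)+\delta$, and letting $\delta\to0$ gives the upper bound.

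\emph{Lower bound.} Suppose for contradiction (or just directly, passing to a subsequence) that $R_\lambda f_n(u,y)\to L$ along a subsequence with $L<R_\lambda f(u,y)$; we may assume $L<+\infty$. Pick $M$ with $L<M<R_\lambda f(u,y)$, so that for $n$ large $R_\lambda f_n(u,y)<M$. This is where the hypothesis of Proposition~\ref{compact} enters: the infimum defining $R_\lambda f_n(u,y)$ coincides with the infimum over the fixed compact set $K:=K_{M,u,y,\lambda}$. Since $f_n$ and the continuous perturbation are lower semicontinuous and $K$ is compact, the infimum is attained: choose $(v_n,z_n)\in K$ with $g_n(v_n,z_n)=R_\lambda f_n(u,y)$. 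By compactness of $K$, extract a further subsequence so that $(v_n,z_n)\to(v_\infty,z_\infty)\in K$. Then by property (i) of $\Gamma$-convergence, $\liminf_n f_n(v_n,z_n)\ge f(v_\infty,z_\infty)$, and by continuity of $d_X(u,\cdot)+|y-\cdot|$, $\liminf_n g_n(v_n,z_n)\ge g(v_\infty,z_\infty)\ge R_\lambda f(u,y)$. This contradicts $g_n(v_n,z_n)=R_\lambda f_n(u,y)\to L<R_\lambda f(u,y)$. Therefore $\liminf_n R_\lambda f_n(u,y)\ge R_\lambda f(u,y)$, and combined with the upper bound we get pointwise convergence.

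\emph{Main obstacle.} The delicate point is the lower bound: one must ensure the near-minimizers $(v_n,z_n)$ of $g_n$ do not escape to infinity, so that the $\Gamma$-liminf inequality can be applied to a convergent sequence. This is precisely what the uniform compact-support hypothesis on the Yosida regularizations (the $(M)$ property, Definition~\ref{defiM}) is designed to give — once $R_\lambda f_n(u,y)$ is below a fixed threshold $M$, all these infima live on the same compact $K_{M,u,y,\lambda}$, independent of $n$. The only mild subtlety is choosing the threshold $M$ strictly between the (subsequential) limit $L$ and the target value $R_\lambda f(u,y)$, which is possible exactly when $L<R_\lambda f(u,y)$, i.e. in the case one is trying to rule out; the complementary case $L\ge R_\lambda f(u,y)$ is the desired conclusion. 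Everything else is routine use of lower semicontinuity and continuity of the added Lipschitz term.
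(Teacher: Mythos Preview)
Your proof is correct and follows essentially the same approach as the paper's: both add the continuous penalty to reduce to convergence of infima, use a recovery sequence for the upper bound, and invoke the compactness hypothesis (the $K_{M,u,y,\lambda}$ sets) to trap near-minimizers and apply the $\Gamma$-liminf inequality for the lower bound. The only cosmetic difference is that the paper argues by showing every subsequential limit of $\inf g_n$ equals $\inf g$, while you split into explicit $\limsup$/$\liminf$ inequalities; the substance is identical.
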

\begin{proof} The proof follows \cite{dalmaso}, Theorem~17.14. Assume $f_n\tog f$, and $(u_0,y_0)\in X\times\mr^n$. Let $f_{n,\lambda}(u,y)  = f_n(u,y) + \lambda\(d_X(u,u_0) + |y-y_0|\)$. Since $f_{n,\lambda} - f_n$ is continuous we have $f_{n,\lambda}\tog f_\lambda$, where $f_\lambda$ is defined in the obvious way. We now prove that this implies that the infimum of $f_{n,\lambda}$ converges to the infimum of $f_\lambda$, i.e. that $R_\lambda f_n(u_0,y_0)\to R_\lambda f(u_0,y_0)$, which will prove the lemma. 

If $\inf f_{n,\lambda}\to +\infty$, then $\inf f_\lambda=+\infty$ by using a recovery sequence. Thus we assume, after extracting a subsequence,  that $\lim_n \inf f_{n,\lambda}=\ell\in\mr$, and we prove that $\ell = \inf f_\lambda$, which will prove the convergence of the whole sequence. 

There exists $(u_n,y_n)$ such that $f_{n,\lambda}(u_n,y_n)\le \ell + o(1)$ as $n\to +\infty$, and from the hypotheses we may choose $(u_n,y_n)$ in the compact set $ K_{M,u_0,y_0,\lambda}$, where $M=\ell+1$. Then a subsequence (not relabeled) converges to $(u,y)$ and since $f_{n,\lambda}\tog f_\lambda$ we deduce $f_\lambda(u,y)\le \ell$. The fact that $f_\lambda(u,y)\ge \ell$ is clear by  using a recovery sequence.
\end{proof}

\begin{lem} Assume $\F$ satisfies the hypothesis of Proposition~\ref{compact}. If $\{f_n\}$ is a sequence in $\F$ such that  $R_\lambda f_n$ converges pointwise to $R_\lambda f$ for any $\lambda\in\mn^*$, then  $\{f_n\}$  $\Gamma$-converges to $f$.
\end{lem}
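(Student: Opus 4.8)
The plan is to verify directly the two conditions (i) and (ii) characterizing $\Gamma$-convergence of $\{f_n\}$ to $f$, using only two elementary features of the Yosida regularization \eqref{yosida}: first, for each $\lambda>0$ the functionals $R_\lambda f$ and $R_\lambda f_n$ are $\lambda$-Lipschitz on $X\times\mr^n$ for the metric $(u,y),(v,z)\mapsto d_X(u,v)+|y-z|$ (an immediate consequence of the triangle inequality applied inside the infimum), so that all the $R_\lambda f_n$ share the same Lipschitz constant; second, $\lambda\mapsto R_\lambda f$ is non-decreasing with $R_\lambda f\le f$ and $\sup_{\lambda>0}R_\lambda f=f$, the last equality holding because $f$ is lower semicontinuous (and everything is $\ge 0$). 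A preliminary remark is that equi-Lipschitz continuity upgrades the hypothesis ``$R_\lambda f_n\to R_\lambda f$ pointwise'' to locally uniform convergence; in fact only continuity at one point will be used.

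For the lower bound (i), I would take a convergent sequence $(u_n,y_n)\to(u,y)$ and fix an integer $\lambda\ge 1$. Using $(v,z)=(u_n,y_n)$ as a competitor in the infimum \eqref{yosida} defining $R_\lambda f_n(u_n,y_n)$ gives $f_n(u_n,y_n)\ge R_\lambda f_n(u_n,y_n)$. On the other hand, the equi-Lipschitz bound yields $|R_\lambda f_n(u_n,y_n)-R_\lambda f_n(u,y)|\le \lambda\bigl(d_X(u_n,u)+|y_n-y|\bigr)\to 0$, while by hypothesis $R_\lambda f_n(u,y)\to R_\lambda f(u,y)$; hence $R_\lambda f_n(u_n,y_n)\to R_\lambda f(u,y)$ and therefore $\liminf_n f_n(u_n,y_n)\ge R_\lambda f(u,y)$. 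Letting $\lambda\to+\infty$ along integers (which suffices by monotonicity) gives $\liminf_n f_n(u_n,y_n)\ge \sup_{\lambda}R_\lambda f(u,y)=f(u,y)$.

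The construction of the recovery sequence (ii) is the only step requiring a little care. Fix $(u,y)$, and assume $f(u,y)<+\infty$ (otherwise the constant sequence works). First I would perform a diagonal extraction: since for each $k\in\mn^*$ one has $R_k f_n(u,y)\to R_k f(u,y)$ as $n\to\infty$, and $R_k f(u,y)\uparrow f(u,y)$ as $k\to\infty$, one can pick an increasing sequence $N_1<N_2<\cdots$ with $|R_k f_n(u,y)-R_k f(u,y)|<1/k$ for all $n\ge N_k$, and set $\lambda_n=\max\{k:\,N_k\le n\}$, so that $\lambda_n\in\mn^*$, $\lambda_n\to+\infty$, and $R_{\lambda_n}f_n(u,y)\to f(u,y)$. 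Then for each $n$ (large enough that this quantity is finite) I would choose an almost-minimizer $(v_n,z_n)$ in \eqref{yosida} with $f_n(v_n,z_n)+\lambda_n\bigl(d_X(u,v_n)+|y-z_n|\bigr)\le R_{\lambda_n}f_n(u,y)+\tfrac1n$. Since the right-hand side stays bounded (by, say, $f(u,y)+2$) and $f_n\ge 0$, the penalty term forces $d_X(u,v_n)+|y-z_n|\le (f(u,y)+2)/\lambda_n\to 0$, so $(v_n,z_n)\to(u,y)$, while simultaneously $f_n(v_n,z_n)\le R_{\lambda_n}f_n(u,y)+\tfrac1n\to f(u,y)$. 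Thus $\limsup_n f_n(v_n,z_n)\le f(u,y)$, which is the desired recovery sequence; together with (i) this establishes $f_n\tog f$.

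The main (and quite mild) obstacle is the bookkeeping in step (ii): the convergence $R_{\lambda_n}f_n(u,y)\to f(u,y)$ must be arranged with $\lambda_n\to+\infty$, precisely so that the Lipschitz penalty in \eqref{yosida} is strong enough to drag the almost-minimizers back to $(u,y)$. It is also worth noting that, unlike the preceding lemma, this argument does not use the (M) hypothesis of Proposition~\ref{compact} at all: it rests solely on the equi-Lipschitz regularity of the Yosida regularizations and on the monotone recovery $R_\lambda f\uparrow f$, the latter being where lower semicontinuity of $f$ is needed.
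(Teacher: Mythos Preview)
Your proof is correct and follows essentially the same route as the paper's: the paper obtains the lower bound via the one-line inequality $f_n(u_n,y_n)+\lambda\bigl(d_X(u_n,u)+|y_n-y|\bigr)\ge R_\lambda f_n(u,y)$ (which is your two steps $f_n\ge R_\lambda f_n$ plus the $\lambda$-Lipschitz shift, combined), and for the recovery sequence it performs the same diagonal extraction $\lambda_n\to+\infty$ with $R_{\lambda_n}f_n(u,y)\to f(u,y)$ followed by a choice of near-minimizers. Your observation that the (M) hypothesis is not used in this direction is also correct.
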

\begin{proof} Assume $R_\lambda f_n$ converges pointwise to $R_\lambda f$ for any $\lambda\in\mn^*$. Then for any sequence $(u_n,y_n)\to (u,y)$ we have 
$$f_n(u_n,y_n) +\lambda\(d_X(u_n,u) + |y_n-y|\)\ge R_\lambda f_n(u,y)\to R_\lambda f(u,y).$$
It follows that $\liminf_n f_n(u_n,y_n)\ge R_\lambda f(u,y)$ for any $\lambda\in\mn^*$. But $f$ is lower semicontinuous, therefore $f(u,y) = \sup_{\lambda\in\mn^*} R_\lambda f(u,y)$ and the lower bound part of $f_n\tog f$ follows.

For the recovery sequence part assume $(u,y)\in X\times \mr^n$. Then, since $R_\lambda f_n(u,y)\to R_\lambda f(u,y)$ and since $R_\lambda f(u,y) \to f(u,y)$ as $\lambda\to +\infty$, there is a subsequence $\lambda_n$ of integers tending to $+\infty$ such that $R_{\lambda_n} f_n(u,y) \to f(u,y)$. For each $n$ we may therefore find $(u_n,y_n)$ such that $R_{\lambda_n} f_n(u,y) \le f(u,y)+ o(1)$, which from the definition of $R_\lambda f$ clearly implies that $\limsup_n f_n(u,n,y_n)\le f(u,y)$, and that $(u_n,y_n)\to (u,y)$.
\end{proof}

\begin{proof}[Proof of the Proposition] Consider a dense sequence $(u_k,y_k)_{k\in\mn^*}$ in $X\times \mr^n$. Since $R_\lambda f$ is $\lambda$-Lipschitz for any $f$, for any $\lambda>0$ the pointwise convergence of $R_\lambda f_n$ to $f$ is equivalent to the convergence of $R_\lambda f_n(u_k,y_k)$ to $R_\lambda f(u_k, y_k)$ for any $k\in\mn^*$. Then we define 
$$d(f,g) = \sum_{\lambda,k\in\mn^*} 2^{-\lambda-k}\left|\frac{R_\lambda f(u_k,y_k)}{1+R_\lambda f(u_k,y_k)} - \frac{R_\lambda g(u_k,y_k)}{1+R_\lambda g(u_k,y_k)}\right|.$$
Then if $\{f_n\}$ is a sequence in $\F$, and $f\in\F$,  we have that $d(f_n,f)\to 0$ is equivalent to $R_\lambda f_n(u_k,y_k)\to R_\lambda f(u_k,y_k)$ for any $\lambda,k\in\mn^*$ which is equivalent to the $\Gamma$-convergence of $f_n$ to $f$, using the two previous lemmas. It is clear that $d$ is a distance on $\F$.

It remains to prove that $\F$ is compact for the topology of $\Gamma$-convergence. This is Theorem 8.5 in \cite{dalmaso}, which applies here without modification.
\end{proof}

\subsection{Stationarity}
Recall that in \cite{dmm, dalmasopaper}, the Lagrangian $\fdm$ is random although the random parameter does not appear in the notation. From now on we will make it appear explicitly, and also allow for a dependence both on the slow variable $x$ and on $\ep$, which is required to deal with the case studied in \cite{am}. Thus, we will consider a family of integrands $\{\f\}$ indexed by a positive number $\ep$, an element $\om$ in a probability space $\Omega$, and a variable $x$ belonging to a smooth bounded open subset $G\subset \mr^n$. A family of functionals $\{f^\om\}$ indexed by the random parameter is said to be stationary if $\theta_y f^\om = f^{\om+y}$, where the action  $\theta$ is as in \eqref{act}. In terms of our family of functionals $\{\f\}$, this means that for any $x\in G$, $\om\in\Om$, $\ep>0$ and $y\in\mr^n$ we have
\begin{equation}\label{invariance}\theta_y\f = \fy.\end{equation}
Note that  $\ep$ and $x$ are fixed in this definition. 

Going back to problem \eqref{dalmasosetting}, and introducing explicitly the parameter $\om$, we let
\begin{equation}\label{dict1} \f(u,y') = \dashint_{B(y',1)} \fdm(\om, z, \nabla u(z))\,dz.\end{equation}
Then 
\begin{equation}\label{dict2} \theta_{y}\f(u,y') = \dashint_{B(y'+y,1)} \fdm(\om, z, \nabla u(z-y))\,dz = \dashint_{B(y',1)} \fdm(\om, z+y, \nabla u(z))\,dz.\end{equation}
Therefore stationarity in this case means that $\fdm(\om, z+y ,p) = \fdm(\om+y,z,p)$. Note that the 
the fact that for any $y$ the map $\om\to \om+y$ is measure preserving and such that $\fdm(\om, z+y ,p) = \fdm(\om+y,z,p)$ implies that $\fdm(\cdot, z+y,p)$ and $\fdm(\cdot,y,p)$ have the same distribution, which is the hypothesis made in \cite{dalmasopaper}. 

From now on we assume that the family $\{\f\}$ is stationary and has the (M) property as defined above, hence from Proposition~\ref{compact} is included in a family $\F$ which is compact metrizable, and in particular is a Polish space. The distance function on $\F$ will be denoted by  $d$. 

\subsection{Probability measure on profiles and functionals}

Given a family $\{\f\}$ as above, we define the random functional $\E$ by letting, for any $u\in X$, 
\begin{equation}\label{energy}\E(u) = \dashint_G \f\(u,\frac x\ep\)\,dx.\end{equation}

To see that \eqref{energy} makes sense we first note that, from the very definition of $\Gamma$-convergence we have
\begin{lem} \label{sci} The map $(y,f,u)\mapsto f(u,y)$ is lower semi-continuous on $\mr^n\times\F\times X$. 
\end{lem}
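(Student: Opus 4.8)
The claim is that $(y,f,u)\mapsto f(u,y)$ is lower semicontinuous on $\mr^n\times\F\times X$, where $\F$ carries the topology of $\Gamma$-convergence (which by Proposition~\ref{compact} is metrizable on $\F$). The plan is to reduce everything to the sequential characterization of $\Gamma$-convergence stated just before Lemma~\ref{gammatrans}. Since $\mr^n\times\F\times X$ is metrizable (being a product of metrizable spaces), lower semicontinuity is equivalent to its sequential version: given $(y_n,f_n,u_n)\to (y,f,u)$, we must show $\liminf_n f_n(u_n,y_n)\ge f(u,y)$.

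First I would unwind the convergence $(y_n,f_n,u_n)\to (y,f,u)$: it means $y_n\to y$ in $\mr^n$, $u_n\to u$ in $X$, and $f_n\tog f$ in the sense of $\Gamma$-convergence with respect to the couple. Then the key observation is that the pair $(u_n,y_n)$ converges to $(u,y)$ in $X\times\mr^n$, so property (i) in the definition of $\Gamma$-convergence applies \emph{directly}: it states precisely that for any convergent sequence $(u_n,y_n)\to(u,y)$ one has $\liminf_n f_n(u_n,y_n)\ge f(u,y)$. This is exactly the desired inequality, so once the bookkeeping of the product topology is dispatched, there is essentially nothing left to prove.

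The only point requiring a bit of care is that property (i) in the definition is stated for a \emph{fixed} sequence $f_n$ that $\Gamma$-converges to $f$, whereas here $f_n$ is the sequence arising from the product-space convergence; but these are the same thing, since convergence of $f_n$ to $f$ in $\F$ is by definition (via the metric $d$ and the two lemmas preceding the proof of Proposition~\ref{compact}) equivalent to $f_n\tog f$. One should also note that the ``$\inf_{B_\ep(u,y)}$'' formulation of $\Gamma$-convergence quoted in the excerpt is equivalent, on a space with countable basis, to the sequential characterization (i)--(ii), which is explicitly stated there; since $X\times\mr^n$ is Polish and hence second countable, this equivalence is available. I would simply invoke (i).

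\textbf{Main obstacle.} There is no serious obstacle: the statement is essentially a restatement of the lower-bound half of the definition of $\Gamma$-convergence once one observes that the topology on $\F\times X\times\mr^n$ is metrizable and that convergence of a point $(f_n,u_n,y_n)$ to $(f,u,y)$ unpacks coordinatewise, with the $\F$-coordinate convergence being $\Gamma$-convergence. The only thing to be slightly careful about is making sure the countable-basis hypothesis needed for the sequential characterization is in force, which it is because all spaces involved are Polish.
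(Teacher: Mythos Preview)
Your proposal is correct and matches the paper's own treatment: the paper states the lemma as an immediate consequence of the definition of $\Gamma$-convergence and offers no further proof. Your unpacking of the sequential characterization (property (i)) together with the metrizability of the product space is exactly the one-line argument the paper has in mind.
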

Given a fixed $u\in X$, the map $x\mapsto \f\(u,\frac x\ep\)$ is the composition of $x\mapsto \(\frac x\ep, \f, u\)$ --- which is Borel measurable since each component is ---  and $(y,f,u)\mapsto f(u,y)$ which is lower semicontinuous. Thus it is Borel measurable and positive, and \eqref{energy} makes sense.

We may rewrite the energy $\E$ in a way which is convenient to take limits.

\begin{defi} Assume $\{\ue\}_\ep$ is a family in $X$. For any $\om$ we define $\Pe$ to be the image of the normalized Lebesgue measure on $G$ by the map $x\mapsto\(x,\theta_\xe\f,\theta_\xe\ue\)$. We write this 
\begin{equation}\label{pe}\Pe = \dashint_G \delta_{\(x,\theta_\xe\f,\theta_\xe\ue\)}.\end{equation}
Then $\Pe$ is a Borel measure on $G\times\F\times X$. 
\end{defi}

To prove that this definition actually defines a Borel measure, it suffices to show that  $x\mapsto\(x,\theta_\xe\f,\theta_\xe\ue\)$ is Borel measurable. This is obvious for the components $x$ and 
$\theta_\xe\ue$ which are continuous with respect to $x$. For the last component we have 

\begin{lem} The map $x\mapsto \theta_\xe\f$ is Borel measurable.\end{lem}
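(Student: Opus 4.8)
The plan is to obtain measurability from continuity: any continuous map between metric spaces is Borel measurable, so it is enough to check that $x\mapsto\theta_\xe\f$ is continuous from $G$ into the Polish space $(\F,d)$ furnished by Proposition~\ref{compact}.

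First I would factor the map as the composition of the affine map $x\mapsto x/\ep$, which is continuous from $G$ to $\mr^n$, with the orbit map $y\mapsto\theta_y\f$ from $\mr^n$ to $\F$; the whole question thus reduces to the continuity of this orbit map. Since the stationary family $\{\f\}$ is, by assumption, contained in the family $\F$ of lower semicontinuous functionals, the functional $\f$ is lower semicontinuous, so Lemma~\ref{gammatrans} applies and tells us that $y\mapsto\theta_y\f$ is continuous for the topology of $\Gamma$-convergence. To turn this into genuine $d$-continuity with values in $\F$, I would check two routine points. One is that $\theta_y\f$ lies in $\F$ for every $y\in\mr^n$: this is precisely the stationarity relation $\theta_y\f=\fy$, which displays $\theta_y\f$ as another member of the family and hence as an element of $\F$. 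The other is that, on $\F$, $\Gamma$-convergence of sequences coincides with convergence for the metric $d$ --- this is exactly what the construction of $d$ in the proof of Proposition~\ref{compact} achieves. Putting these together: if $y_n\to y$ in $\mr^n$ then $\theta_{y_n}\f\tog\theta_y\f$ with all terms and the limit in $\F$, whence $d(\theta_{y_n}\f,\theta_y\f)\to 0$. As $\mr^n$ is metrizable, this sequential continuity is the continuity of $y\mapsto\theta_y\f$ into $(\F,d)$.

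Composing with the continuous map $x\mapsto x/\ep$ then gives that $x\mapsto\theta_\xe\f$ is continuous from $G$ into $(\F,d)$, hence in particular Borel measurable, which is the assertion. I do not anticipate a real difficulty here: the only thing requiring a moment's attention is the bookkeeping just described, namely that the ``continuity'' produced by Lemma~\ref{gammatrans} is honest continuity into the metric space $(\F,d)$ and not merely sequential continuity for the (non-separated) $\Gamma$-convergence topology on the space of all functionals --- and this is secured by stationarity, which confines the whole orbit of $\f$ to $\F$, together with the metrization statement of Proposition~\ref{compact}.
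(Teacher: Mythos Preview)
There is a genuine gap: you have overlooked that the functional $\f=f_\ep^{\om,x}$ depends on the slow variable $x$, not only through the translation $\theta_{x/\ep}$. Your factorization $x\mapsto x/\ep$ followed by $y\mapsto\theta_y\f$ treats $\f$ as a fixed element of $\F$, whereas in the abstract setting of this section the family $\{\f\}$ is indexed by $x\in G$ as well as by $\ep$ and $\om$ (this is precisely what allows the Alberti--M\"uller application, where $m(x)$ enters). Thus the map in question is really $x\mapsto\theta_{x/\ep}\,f_\ep^{\om,x}$, and your continuity argument only handles the translation part.

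The paper's proof deals with this by factoring through the two-variable map $(x',y)\mapsto\theta_y\,f_\ep^{\om,x'}$, precomposed with the continuous $x\mapsto(x,x/\ep)$. In the first variable this map is only Borel measurable (there is no continuity hypothesis on $x'\mapsto f_\ep^{\om,x'}$), while in the second it is continuous by Lemma~\ref{gammatrans}. One then needs a nontrivial joint-measurability result --- a function on a product space that is measurable in one variable and continuous in the other is jointly Borel measurable --- for which the paper invokes \cite{gow}. Your argument as written would be complete in the special case of Section~\ref{convex}, where the family does not depend on $x$, but not in the general framework.
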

\begin{proof} We write this map as the composition of $x\mapsto (x,\xe)$ and $(x,y)\mapsto \theta_y\f$, hence it suffices to show that this last map is Borel measurable. It is Borel measurable with respect to $x$ and therefore it will be Borel measurable if we prove it is continuous with respect to $y$ \cite{gow}. This is the content of Lemma~\ref{gammatrans}.
\end{proof}

We may now rewrite $\E$ as follows
\begin{pro} With the above notation,
\begin{equation}\label{ebis} \E(\ue) = \int \Phi(f,u)\,d\Pe(x,f,u),\end{equation}
where 
$$\Phi(f,u):=f(u,0).$$
\end{pro}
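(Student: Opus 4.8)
The plan is to unwind both sides of \eqref{ebis} using the definition \eqref{pe} of $\Pe$ as a pushforward measure. By the change-of-variables formula for pushforward (image) measures, for any bounded Borel (or nonnegative Borel) function $\Psi$ on $G\times\F\times X$ we have
\[
\int \Psi(x,f,u)\,d\Pe(x,f,u) = \dashint_G \Psi\(x,\theta_\xe\f,\theta_\xe\ue\)\,dx,
\]
provided the integrand on the right is Borel measurable in $x$, which is exactly what the preceding lemmas established (the map $x\mapsto(x,\theta_\xe\f,\theta_\xe\ue)$ is Borel measurable). So the first step is simply to apply this with $\Psi(x,f,u) = \Phi(f,u) = f(u,0)$; one should first check that $\Psi$ is Borel measurable, which follows from Lemma~\ref{sci} since $(y,f,u)\mapsto f(u,y)$ is lower semicontinuous, hence Borel, and $(x,f,u)\mapsto(0,f,u)$ is continuous.

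The second and essentially only substantive step is to identify the resulting right-hand side with $\E(\ue)$. After the change of variables we obtain
\[
\int \Phi(f,u)\,d\Pe(x,f,u) = \dashint_G \(\theta_\xe\f\)\(\theta_\xe\ue,0\)\,dx,
\]
and it remains to verify the pointwise identity $\(\theta_\xe\f\)\(\theta_\xe\ue,0\) = \f\(\ue,\xe\)$ for every $x\in G$. This is immediate from the definition \eqref{act} of the action: $\theta_y g(v,z) = g(\theta_{-y}v, z+y)$, so taking $g=\f$, $v=\theta_\xe\ue$, $z=0$, $y=\xe$ gives $\(\theta_\xe\f\)(\theta_\xe\ue,0) = \f(\theta_{-\xe}\theta_\xe\ue, \xe) = \f(\ue,\xe)$, using that $\theta_{-y}\theta_y = \mathrm{id}$ on $X$. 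Plugging this back in yields $\dashint_G \f(\ue,\xe)\,dx$, which is precisely the definition \eqref{energy} of $\E(\ue)$.

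The proof is therefore essentially a bookkeeping computation chaining together the pushforward change of variables with the definition of the translation action; there is no genuine obstacle. The one place requiring a small amount of care is the measurability/integrability hypothesis needed to legitimately apply the change-of-variables formula: one must confirm that $\Phi\circ(\text{projection to }(f,u))$ is Borel measurable on $G\times\F\times X$ (from lower semicontinuity, Lemma~\ref{sci}) and nonnegative (which holds since integrands take values in $\mr_+\cup\{+\infty\}$), so that the integral against the pushforward measure is well defined in $[0,+\infty]$ even before knowing finiteness. With that in place the identity \eqref{ebis} follows, and it automatically also re-justifies that the right-hand side of \eqref{energy} was well defined, consistently with the discussion preceding the Definition of $\Pe$.
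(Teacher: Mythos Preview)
Your proof is correct and follows essentially the same approach as the paper's one-line computation, just written in the reverse direction and with more detail on the measurability justifications. The paper starts from the definition of $\E(\ue)$, applies the action identity $\f(\ue,\xe) = \theta_\xe\f(\theta_\xe\ue,0)$, and then recognizes the result as the pushforward integral; you start from the pushforward integral and work back to $\E(\ue)$, which amounts to the same chain of equalities.
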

Note that $\Phi$ is lower semicontinuous from Lemma~\ref{sci} hence the integral in \eqref{ebis} makes sense. The one line proof of \eqref{ebis} is 
\begin{multline*} \E(\ue) = \dashint_G \f\(\ue,\xe\)\,dx =  \dashint_G \theta_\xe \f\(\theta_\xe\ue,0\)\,dx \\=  \dashint_G \Phi\(\theta_\xe \f,\theta_\xe\ue\)\,dx = \int \Phi(f,u)\,d\Pe(x,f,u).\end{multline*}

For the integral \eqref{energy} to make sense, it suffices that  $\f$ be Borel measurable with respect to $x$. We actually make the stronger assumption that it is {\em uniformly measurable} with respect to $x$ in the following sense.

\begin{defi} The family  $\{\f\}$  is {\em uniformly measurable} with respect to $x$ if for any $\delta>0$, 
\begin{equation}\label{um} \lim_{h\to 0}\left|\{x\in G\mid \sup_{\om} d(\f, \theta_y f_\ep^{\om, x+h}) >\delta\}\right| = 0,\end{equation}
and the limit is uniform with respect to $\ep$. 
\end{defi}

\subsection{Passing to the limit}
We now wish to compute a $\Gamma$-convergence lower bound for $\E$. The first step is  to pass to the limit in \eqref{ebis}  in terms of the probability measure $\Pe$ given in  \eqref{pe}. The  limiting measure $\Po= \lim \Pe$ is precisely what is referred to in the introduction as the Young measure on micropatterns and functionals.

\begin{pro}\label{tight} Assume that $\E(\ue) \le C$ for every $\ep>0$. Then for any sequence $\{\ep_n\}_n$ converging to $0$,  the family of probability measures  $\{\Pen\}_n$ is relatively compact, and any accumulation point $\Po$ is a probability measure on $G\times\F\times X$.
\end{pro}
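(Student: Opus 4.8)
The plan is to prove relative compactness of $\{\Pen\}_n$ via Prokhorov's theorem, which requires tightness, and then to identify any accumulation point as a probability measure on the product space $G\times\F\times X$. Tightness must be established separately on each of the three factors; since $G$ is bounded (hence its closure is compact) that factor is automatic, and since $\F$ is compact by Proposition~\ref{compact} that factor is automatic as well. The only real work is tightness in the $X$-factor, and this is where the energy bound $\E(\ue)\le C$ and the coercivity built into the (M) property enter.

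First I would recall that, by \eqref{pe}, the $X$-marginal of $\Pen$ is the law of $\theta_{x/\ep_n}\uen$ when $x$ is uniformly distributed on $G$, equivalently the image of normalized Lebesgue measure on $G/\ep_n$ under the translation-composition map. To get tightness in $X$ I would use that $\Pen$-integrals of the functional $f\mapsto \dashint_{B(0,R)} f(u,y)\,dy$ can be controlled. Concretely, for each fixed $R>0$,
\[
\int \left(\dashint_{B(0,R)} f(u,y)\,dy\right) d\Pen(x,f,u)
= \dashint_G \dashint_{B(0,R)} \theta_{x/\ep_n}\f\big(\theta_{x/\ep_n}\uen, y\big)\,dy\,dx,
\]
and using the translation action \eqref{act} together with a change of variables this rewrites, up to boundary terms that vanish as $\ep_n\to 0$ since $G$ is bounded and $R$ fixed, as a local average of $\f(\uen,\cdot/\ep_n)$ over a slightly enlarged domain, hence is bounded by a constant $C_R$ depending only on $C$, $R$, and $G$ (using $f\ge f_0\ge 0$ to discard sign issues and the energy bound $\E(\uen)\le C$). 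Therefore, by Markov's inequality applied in the variable $(x,f,u)$, for every $R$ and every $\lambda>0$ the set of $(x,f,u)$ with $\dashint_{B(0,R)} f(u,y)\,dy > C_R/\lambda$ has $\Pen$-measure at most $\lambda$, uniformly in $n$. Combining over a countable family of radii $R$ (say $R\in\mn^*$) with weights chosen so the total excluded mass is at most $\eta$, the complementary set $A_\eta = \{(x,f,u)\mid \forall R\in\mn^*,\ \dashint_{B(0,R)} f(u,y)\,dy\le C(R)\}$ for a suitable function $C$ has $\Pen(A_\eta)\ge 1-\eta$ for all $n$. But the projection of $A_\eta$ to $X$ is contained in a set of the form $\{u\in X\mid \forall R,\ \dashint_{B(0,R)} f_0(u,y)\,dy\le C(R)\}$ (since $f_0\le f$), which is compact in $X$ by the coercivity of $f_0$ from the (M) property. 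Together with compactness of $\overline G$ and of $\F$, this gives a compact set in $G\times\F\times X$ carrying $\Pen$-mass at least $1-3\eta$ (or $1-\eta$ after rescaling), i.e. tightness; Prokhorov then yields relative compactness and every accumulation point is a probability measure.

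The step I expect to be the main obstacle is the rewriting of $\int \dashint_{B(0,R)} f(u,y)\,dy\,d\Pen$ in terms of a genuine energy quantity and showing it stays bounded: one must carefully use the stationarity \eqref{invariance}, the definition \eqref{act} of the action on integrands, and a Fubini argument to move the inner average $\dashint_{B(0,R)}$ out, picking up only a controllable error coming from the mismatch between $G/\ep_n$ and its $R$-neighborhood — a purely boundary-layer term of order $R\,\ep_n/\diam(G)$ that tends to $0$. A secondary point requiring care is that $f\mapsto \dashint_{B(0,R)} f(u,y)\,dy$ is only lower semicontinuous (by Lemma~\ref{sci} and Fatou), not continuous, on $\F\times X$, so the set $A_\eta$ is closed and hence its intersection with the relevant compact sets is compact — which is all that is needed for the tightness argument — but one cannot directly invoke continuity when passing to the limit; fortunately, for tightness only the uniform mass estimate is used, so lower semicontinuity suffices.
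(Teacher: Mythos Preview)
Your overall strategy---tightness via Prokhorov, with the $G$ and $\F$ factors trivially compact and the $X$-factor handled through coercivity of $f_0$ and Markov's inequality---is the right one, and is in spirit what the paper does. However there are two genuine gaps.

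\textbf{The Fubini step.} Your claim that $\int \dashint_{B_R} f(u,y)\,dy\,d\Pen$ rewrites as a local average of the energy density over a slightly enlarged domain is not correct when $\f$ genuinely depends on the slow variable $x$. After the change of variables you obtain
\[
\dashint_G \dashint_{B(x,R\ep_n)} f_{\ep_n}^{\om,x}\bigl(\uen,w/\ep_n\bigr)\,dw\,dx,
\]
in which the superscript $x$ and the evaluation point $w/\ep_n$ do not match; Fubini does \emph{not} recover the energy. The paper proceeds in the opposite order: it starts from the energy $\int_G f_{\ep}^{\om,x'}(\ue,x'/\ep)\,dx'$, averages it by Fubini over balls $B(x,2^k\ep)$ with $x$ in a slightly \emph{shrunk} domain $G_\ep$, then uses stationarity to translate, and only at the end invokes $f_0\le f_{\ep}^{\om',x''}$ to get
\[
C\ge \int_{G_\ep}\dashint_{B_{2^k}} f_0(\theta_{x/\ep}\ue,y)\,dy\,dx.
\]
The domain is shrunk, not enlarged, precisely because nothing controls $\ue$ outside $G/\ep$.

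\textbf{Summability of boundary errors.} Even granting such a bound for each fixed $R=2^k$, your plan of producing a \emph{single} compact set carrying mass $\ge 1-\eta$ for all $n$ fails: the exceptional set for radius $2^k$ carries, besides the Markov term, a boundary contribution $|G\setminus G_{\ep_n}^{(2^k)}|/|G|$ of order $2^k\ep_n$, and summing this over all $k\in\mn^*$ diverges. The paper circumvents this by using a tightness criterion (attributed to Lesigne) allowing the sets $K_{n,\delta}$ to depend on $n$: one takes
\[
K_{\ep} = \Bigl\{u\ \Big|\ \forall\,1\le k\le k_\ep,\ \dashint_{B_{2^k}} f_0(u,y)\,dy\le A\,2^k\Bigr\}
\]
with $k_\ep\to\infty$ but $\ep\,2^{k_\ep}\to 0$, so that the single boundary layer $G\setminus G_\ep$ has vanishing measure while the range of radii grows without bound. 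The criterion then only requires that any sequence $(x_n,f_n,u_n)\in G\times\F\times K_{\ep_n}$ be relatively compact, which follows from coercivity since for each fixed $k$ the bound holds for all large $n$. This $n$-dependence of the tightness sets is the missing idea in your argument.
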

\begin{proof} Since $\{\Pen\}_n$ are probability measures on a Polish space, we must prove that the family is tight. We use a criterion for tightness whose proof is due to E. Lesigne (private communication, see \cite[Lemma~2.1]{ss1}):  $\{\Pen\}_n$ is tight if and only if for any $\delta>0$ and any $n$ there exists $K_{n,\delta}\subset G\times\F\times X$ (which {\em need not} be compact) such that (i) $\Pen(K_{n,\delta})>1-\delta$ and (ii) if $(x_n, f_n, u_n)\in K_{n,\delta}$ for every $n$, then $\{(x_n, f_n, u_n)\}_n$ is relatively compact. Note that for any given $\delta$, (i) need only be satisfied for $n$ large enough, since we may replace $K_{n,\delta}$ by $G\times\F\times X$ for $n<n_0$ without altering condition (ii).

We let, recalling that $f_0$ is a coercive lower semicontinuous functional which bounds from below every $f\in\F$, 
$$K_\ep= \left\{u\in X \mid \forall 1\le k\le k_\ep,\ \dashint_{B_{2^k}} f_0(u,y)\,dy\le A 2^k\right\},$$
where $k_\ep$ is chosen such that, as $\ep\to 0$, 
$$k_\ep\to +\infty,\quad \ep 2^{k_\ep} \to 0.$$
We will prove that for any  sequence $\{\ep_n\}_n$ converging to $0$ and any $\delta>0$, if we choose $A$ large enough then $K_{n,\delta} := G\times\F\times K_{\ep_n}$ satisfies  (i) and (ii) above, hence proving the proposition. 

To lighten notation, we denote by  $\ep$ an element of the sequence  $\ep_n$, and write $\lim_{\ep\to 0}$ instead of $\lim_{n\to +\infty}$.

Let $G_\ep$ be the set of $x\in G$ such that $B(x,\ep 2^{k_\ep})\subset G$. Then, for every $1\le k\le k_\ep$ we have 
\begin{equation}\begin{split} C&\ge \int_G \f(\ue,x)\,dx \ge \int_{G_\ep}  \dashint_{B(x,2^k\ep)} \f(\ue,x'/\ep)\,dx'\,dx\\
                        &= \int_{G_\ep} \dashint_{B(0,2^k\ep)} f_\ep^{\om,x+x'}\(\ue,\frac{x+x'}\ep\)\,dx'\,dx\\
                        &= \int_{G_\ep} \dashint_{B(0,2^k)} f_\ep^{\om+\xe,x+y}\(\theta_\xe\ue,y\)\,dy\,dx\\
                        &\ge \int_{G_\ep} \dashint_{B(0,2^k)} f_0\(\theta_\xe\ue,y\)\,dy\,dx.\\
                        \end{split}\end{equation}
It follows that 
$$\left|\left\{x\in G_\ep \mid \dashint_{B(0,2^k)} f_0\(\theta_\xe\ue,y\)\,dy>A 2^k\right\}\right| \le \frac CA2^{-k},$$
and therefore, that 
$$\Pe\(G\times\F\times K_\ep\) \ge 1 - \frac 1{|G|}\(\sum_{k=1}^{k_\ep} \frac C{2^k A} + |G\setminus G_\ep|\right).$$
 From our choice of $k_\ep$, we have that $|G\setminus G_\ep|$ tends to $0$ as $\ep\to 0$, hence for any $\delta>0$, choosing $A$ large enough we find that $\Pe\(G\times\F\times K_\ep\) \ge 1 - \delta$ for any small enough $\ep$. This proves (i). 

Now we prove (ii). Assume $(\xe,f_\ep,\ue)\in G\times\F\times K_\ep$ for any $\ep$ belonging to the sequence $\{\ep_n\}$ which converges to $0$. Then there exists a subsequence, which we do not relabel, such that $\xe$ and $\fe$ both converge, since $G$ and $\F$ are relatively compact. Moreover, $\ue\in K_\ep$ implies that for any integer $k>0$ we have 
$$\limsup_{\ep\to 0} \dashint_{B_{2^k}} f_0(\ue,y)\,dy\le A 2^k,$$
hence for a further subsequence $\{\ue\}$ converges as well, using the coercivity of $f_0$. 
\end{proof}

We also have, using \eqref{ebis}, the fact that $\Phi$ is lower semicontinuous and Lemma~2.2 in \cite{ss2}, the following
\begin{pro}\label{lb1}  If $\E(\ue) \le C$ for every $\ep>0$, and if $\{\ep_n\}_n$ is a sequence converging to $0$  such that $\Pen\to\Po$ as $n\to +\infty$, then
$$\liminf_{n\to +\infty} \En(\uen) \ge \int f(u,0)\,d\Po(x,f,u).$$
\end{pro}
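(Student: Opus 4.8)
The plan is to deduce Proposition~\ref{lb1} from Proposition~\ref{tight} together with the representation formula \eqref{ebis} and a lower-semicontinuity argument for integral functionals under weak convergence of measures. Recall from \eqref{ebis} that $\En(\uen) = \int \Phi(f,u)\,d\Pen(x,f,u)$ with $\Phi(f,u) = f(u,0)$, and by Proposition~\ref{tight} (whose hypothesis $\En(\uen)\le C$ is exactly our assumption) the $\Pen$ form a tight family of probability measures on the Polish space $G\times\F\times X$, so that $\Pen\to\Po$ (after passing to the assumed convergent subsequence) with $\Po$ a probability measure on the same space. What remains is to show
\begin{equation*}
\liminf_{n\to+\infty} \int \Phi\,d\Pen \ge \int \Phi\,d\Po .
\end{equation*}

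The key point is that $\Phi$ is a \emph{nonnegative lower semicontinuous} function on $G\times\F\times X$: nonnegativity holds because integrands take values in $\mr_+\cup\{+\infty\}$, and lower semicontinuity is precisely the content of Lemma~\ref{sci} (evaluated at $y=0$). For nonnegative lower semicontinuous $\Phi$ and weak convergence of probability measures $\Pen\to\Po$ on a Polish space, the standard Portmanteau-type inequality $\int\Phi\,d\Po\le\liminf_n\int\Phi\,d\Pen$ holds; this is exactly the ``Lemma~2.2 in \cite{ss2}'' invoked in the statement. First I would write $\Phi$ as an increasing limit of bounded continuous functions $\Phi = \sup_k \Phi_k$ with $\Phi_k\nearrow\Phi$, each $\Phi_k$ bounded continuous and nonnegative (possible since $G\times\F\times X$ is metric, using $d$ on $\F$ and $d_X$ on $X$). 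Then for each fixed $k$, weak convergence gives $\int\Phi_k\,d\Po = \lim_n \int\Phi_k\,d\Pen \le \liminf_n \int\Phi\,d\Pen$, and letting $k\to+\infty$ and applying the monotone convergence theorem on the left yields $\int\Phi\,d\Po \le \liminf_n\int\Phi\,d\Pen$, which is the claim.

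Combining, $\liminf_n \En(\uen) = \liminf_n \int\Phi\,d\Pen \ge \int\Phi\,d\Po = \int f(u,0)\,d\Po(x,f,u)$, which is the asserted inequality.

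The only delicate point is the lower semicontinuity input: one must make sure Lemma~\ref{sci} is applied with the correct joint topology on $\mr^n\times\F\times X$ (the product of the metric $d$ on $\F$ coming from Proposition~\ref{compact} and the given topologies), so that $\Phi(f,u)=f(u,0)$ is lower semicontinuous as a function of $(x,f,u)$ (trivially constant in $x$). Granting that — and it is already recorded as Lemma~\ref{sci} — the argument is the routine approximation of a nonnegative l.s.c.\ function from below by bounded continuous functions together with weak convergence and monotone convergence; there is no real obstacle beyond bookkeeping. One should also note that $\Po$ being a probability measure (not merely subprobability), guaranteed by Proposition~\ref{tight}, is what makes the Portmanteau inequality applicable without loss of mass.
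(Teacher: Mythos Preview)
Your proof is correct and follows exactly the same route as the paper: rewrite $\En(\uen)$ via \eqref{ebis} as $\int\Phi\,d\Pen$ with $\Phi(f,u)=f(u,0)$ nonnegative and lower semicontinuous (Lemma~\ref{sci}), then apply the Portmanteau-type inequality for l.s.c.\ integrands under weak convergence of measures, which the paper simply cites as Lemma~2.2 in \cite{ss2} while you spell out the standard monotone-approximation argument. One minor remark: your final comment that $\Po$ being a genuine probability (not subprobability) is ``what makes the Portmanteau inequality applicable'' is not quite right --- the inequality $\int\Phi\,d\Po\le\liminf_n\int\Phi\,d\Pen$ for nonnegative l.s.c.\ $\Phi$ holds regardless of mass loss, and in any case the convergence $\Pen\to\Po$ is given as a hypothesis here rather than something you need Proposition~\ref{tight} to supply.
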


The next step consists in studying some properties of $\Po$.

\subsection{Invariance properties of $\Po$}

We may define an action on $G\times\F\times X$ by letting, for any $y\in\mr^n$, 
\begin{equation}\label{action} \theta_y(x,f,u) = (x,\theta_y f, \theta_y u).\end{equation}

Then we have
\begin{pro} \label{inv1} Assume that $\E(\ue)\le C$ for any $\ep>0$, and let $\{\ep_n\}_n$ be a sequence tending to $0$ such that $\Pen\to \Po$ as $n\to +\infty$, where $\Pe$ is defined in \eqref{pe}. 

Then $\Po$ is invariant under the action of $\theta$.
\end{pro}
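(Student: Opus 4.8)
The plan is to show that for any fixed $y \in \mr^n$, the pushforward $(\theta_y)_* \Po$ equals $\Po$. Since $\Po = \lim_n \Pen$ along the chosen subsequence and $\theta_y$ is continuous on $G \times \F \times X$ (continuity of $\theta_y$ on $\F$ is Lemma~\ref{gammatrans}, on $X$ it is an assumption, and on $G$ it is the identity), it suffices to compare $\Pen$ with $(\theta_y)_* \Pen$ and show their difference tends to zero weakly as $n \to \infty$. Equivalently, testing against a bounded continuous $\Psi$ on $G \times \F \times X$, I would estimate
$$\left| \int \Psi \, d\Pen - \int \Psi \circ \theta_y \, d\Pen \right|$$
and show it vanishes in the limit.

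\textbf{Key computation.} By definition \eqref{pe}, $\int \Psi \, d\Pen = \dashint_G \Psi\bigl(x, \theta_{x/\ep_n} \eam, \theta_{x/\ep_n} \uen\bigr)\, dx$, where I write $\eam$ for $f_{\ep_n}^{\om, x}$. Using the stationarity relation \eqref{invariance}, $\theta_z f_\ep^{\om,x} = f_\ep^{\om + z, x}$ --- wait, more carefully: I want to relate $\theta_{x/\ep_n} f_{\ep_n}^{\om,x}$ evaluated at a shifted argument. The point is that $\theta_y \circ \theta_{x/\ep_n} = \theta_{x/\ep_n + y}$, so $\theta_y\bigl(\theta_{x/\ep_n}\eam, \theta_{x/\ep_n}\uen\bigr) = \bigl(\theta_{(x + \ep_n y)/\ep_n}\eam, \theta_{(x+\ep_n y)/\ep_n}\uen\bigr)$. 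Thus $\int \Psi \circ \theta_y\, d\Pen = \dashint_G \Psi\bigl(x, \theta_{(x+\ep_n y)/\ep_n}\eam, \theta_{(x+\ep_n y)/\ep_n}\uen\bigr)\, dx$. Substituting $x' = x + \ep_n y$ (a translation of the domain $G$ by the vanishing vector $\ep_n y$) converts this, up to the boundary error $|G \triangle (G - \ep_n y)|/|G| \to 0$ and the first-coordinate discrepancy between $x$ and $x' = x - \ep_n y$, into $\dashint_G \Psi\bigl(x', \theta_{x'/\ep_n} f_{\ep_n}^{\om, x' - \ep_n y}, \theta_{x'/\ep_n}\uen\bigr)\, dx'$. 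So the difference from $\int \Psi\, d\Pen$ is controlled by (a) the uniform continuity of $\Psi$ in its first coordinate, giving a term $\le \omega_\Psi(\ep_n |y|)$; (b) the boundary term $\|\Psi\|_\infty \cdot |G \triangle (G-\ep_n y)|/|G|$; and (c) a term measuring how far $\theta_{x'/\ep_n} f_{\ep_n}^{\om, x' - \ep_n y}$ is from $\theta_{x'/\ep_n} f_{\ep_n}^{\om, x'}$, i.e. roughly $d\bigl(f_{\ep_n}^{\om, x'-\ep_n y}, \theta_{(x'-x')/\ep_n}\cdots\bigr)$ --- this is precisely where the \emph{uniform measurability} hypothesis \eqref{um} enters: for all but a set of $x'$ of vanishing measure, $\sup_\om d(f_{\ep_n}^{\om,x'}, \theta_y f_{\ep_n}^{\om, x' - \ep_n y})$ is small, and on the bad set we use $\|\Psi\|_\infty$ and smallness of the measure. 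Since $\F$ is compact, $\Psi$ is uniformly continuous in the $\F$-coordinate too, so this translates into a small contribution to the integral.

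\textbf{Main obstacle.} The delicate point is step (c): reconciling the $x$-dependence of the integrand $f_\ep^{\om,x}$ with the translation argument. In the purely stationary case without $x$-dependence this is immediate, but here the translation by $\ep_n y$ shifts the "slow" label $x$ inside $f_\ep^{\om,x}$, and one must invoke \eqref{um} to argue that this shift is asymptotically negligible uniformly in $\om$ (so that it survives the $\dashint_G$ average, which in $\Pen$ plays the role of the randomness). I would carry this out by splitting $G$ into the good set where $\sup_\om d(f_{\ep_n}^{\om, x}, \theta_y f_{\ep_n}^{\om, x - \ep_n y}) < \delta$ --- whose complement has measure $o(1)$ as $\ep_n \to 0$ by uniform measurability with $h = \ep_n y$ --- and its complement, then let $\delta \to 0$. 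Assembling (a), (b), (c) shows $\int \Psi\, d\Po = \int \Psi \circ \theta_y\, d\Po$ for every bounded continuous $\Psi$ and every $y$, which is the claimed invariance.
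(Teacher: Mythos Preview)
Your proposal is correct and follows essentially the same route as the paper's proof: test against a nice function, unwind the definition of $\Pen$, use $\theta_y\circ\theta_{x/\ep_n}=\theta_{(x+\ep_n y)/\ep_n}$, change variables by $x'=x+\ep_n y$, and control the three discrepancies (boundary term $|G\triangle(G+\ep_n y)|$, first-coordinate shift by $\ep_n y$, and the $x$-label shift in $f_\ep^{\om,x}$) --- the last one via the uniform measurability hypothesis \eqref{um}, exactly as you identify in step~(c). The only cosmetic difference is that the paper explicitly reduces to bounded \emph{Lipschitz} test functions before carrying out the estimate, which cleanly justifies your use of a modulus $\omega_\Psi$ in steps (a) and (c); you should make this reduction explicit, since a merely bounded continuous $\Psi$ on the non-compact space $G\times\F\times X$ need not be uniformly continuous.
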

\begin{proof} In the course of the proof and to lighten notation, we denote by $\ep$ a generic element of the sequence $\{\ep_n\}_n$ and write $\lim_{\ep\to 0}$ instead of $\lim_{n\to +\infty}$.

We need to prove that for any bounded continuous function $\vp$ on $G\times\F\times X$ and for any $y\in\mr^n$ we have 
\begin{equation}\label{eqinv}\int \vp(x,f,u)\,d\Po(x,f,u) = \int \vp(x,\theta_y f,\theta_y u)\,d\Po(x,f,u).\end{equation}
It is well known that it suffices to consider Lipschitz continuous functions. 

Consider then a bounded Lipschitz continuous function $\vp$ and a sequence $\{\ep_n\}_n$ converging to $0$ such that $\Pe\to\Po$. Then from the definition of $\Pe$, 
\begin{equation}\label{invsplit}\begin{split} 
\int \vp(x,\theta_y f,\theta_y u)\,d\Po(x,f,u) &= \lim_{\ep\to 0} \dashint_G \vp\(x,\theta_{\xe+y} \f, \theta_{\xe+y} \ue\)\,dx \\
								   &= \lim_{\ep\to 0} \dashint_{G+\ep y} \vp\(x-\ep y,\theta_{\xe} f_\ep^{\om, x-\ep y}, \theta_{\xe} \ue\)\,dx \\
								   &= \lim_{\ep\to 0} \dashint_{G} \vp\(x-\ep y,\theta_{\xe} f_\ep^{\om, x-\ep y}, \theta_{\xe} \ue\)\,dx, 
                        \end{split}\end{equation}
since $|G\triangle (G+\ep y)|$ tends to $0$ as $\ep\to 0$, where $\triangle$ denotes the symmetric difference of sets. Now, because of the uniform measurability \eqref{um},  for any $\delta>0$ we have that the measure of the set of $x\in G$ such that $d(\theta_{\xe} f_\ep^{\om, x}, \theta_{\xe} f_\ep^{\om, x-\ep y})>\delta$ tends to $0$ as $\ep\to 0$. Therefore,  as $\ep\to 0$, 
$$\left|\dashint_{G} \vp\(x-\ep y,\theta_{\xe} f_\ep^{\om, x-\ep y}, \theta_{\xe} \ue\) -  \vp\(x, \theta_{\xe} f_\ep^{\om, x}, \theta_{\xe} \ue\)\,dx\right|\le |G|(\delta +\ep |y|) \|\vp\|_{\lip} + o(1) |\vp|_\infty.$$
Since this is true for any $\delta>0$, we deduce that 
\begin{multline*}\lim_{\ep\to 0} \dashint_{G} \vp\(x-\ep y,\theta_{\xe} f_\ep^{\om, x-\ep y}, \theta_{\xe} \ue\)\,dx =\\ \lim_{\ep\to 0} \dashint_{G} \vp\(x,\theta_{\xe}\f, \theta_{\xe} \ue\)\,dx = \lim_{\ep\to 0} \int \vp\,d\Pe = \int \vp\,d\Po.\end{multline*}
Together with \eqref{invsplit}, this proves \eqref{eqinv}.
\end{proof}

Another useful invariance property of $\Po$ is 
\begin{pro}  Assume that $\E(\ue)\le C$ for any $\ep>0$, and let $\{\ep_n\}_n$ be a sequence tending to $0$ such that $\Pen\to \Po$ as $n\to +\infty$, where $\Pe$ is defined in \eqref{pe}. 

Then, for any $y\in\mr^n$, we have $\Peny\to\Poy$ as $n\to+\infty$, where  $\Poy$ is  the push-forward of $\Po$ by the map $(x,f,u)\to(x,f,\theta_{-y} u)$. In particular, denoting  by $\Qo$  the marginal of $\Po$ with respect to the first two variables, we have  $\Qoy = \Qo$.
\end{pro}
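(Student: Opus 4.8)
The statement asserts two things: first, that $\Peny \to \Poy$ along the chosen subsequence, where $\Poy$ is the push-forward of $\Po$ under $(x,f,u)\mapsto(x,f,\theta_{-y}u)$; and second, that the marginal $\Qo$ of $\Po$ on the first two variables is translation-invariant in the sense $\Qoy = \Qo$. The key observation is that $\Pey$ is built from exactly the same family of functionals $\{\f\}$ as $\Pe$, but with the sequence $\{\theta_y\ue\}$ in place of $\{\ue\}$ — indeed, by stationarity \eqref{invariance}, $f_\ep^{\om+y,x} = \theta_y\f$, so replacing $\om$ by $\om+y$ amounts to translating the integrand, and simultaneously the rescaled unknown $\theta_{\xe}(\theta_y\ue)$ appears. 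So the first order of business is to write $\Pey$ explicitly in terms of the data of $\Pe$.

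\emph{Step 1: rewrite $\Pey$.} By definition \eqref{pe} applied to $\om+y$ and to the family $\{\ue\}$ (note $\E^{\om+y}(\ue)\le C$ as well, since stationarity and the change of variables $\om\to\om+y$ preserve the energy bound — or one simply works with $\{\theta_y\ue\}$ directly), one has
\begin{equation*}
\Pey = \dashint_G \delta_{\(x,\ \theta_\xe f_\ep^{\om+y,x},\ \theta_\xe\ue\)}
 = \dashint_G \delta_{\(x,\ \theta_{\xe+y}\f,\ \theta_\xe\ue\)},
\end{equation*}
using the stationarity relation $f_\ep^{\om+y,x}=\theta_y\f$ and the composition rule $\theta_\xe\theta_y=\theta_{\xe+y}$ for the action. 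Comparing with $\Pe = \dashint_G \delta_{(x,\theta_\xe\f,\theta_\xe\ue)}$, the difference is a shift by $\ep y$ in the argument of $\f$ and the absence of the $\theta_y$-shift on the last slot.

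\emph{Step 2: pass to the limit.} Now test against a bounded Lipschitz $\vp$ on $G\times\F\times X$. The integral $\int\vp\,d\Pey = \dashint_G \vp(x,\theta_{\xe+y}\f,\theta_\xe\ue)\,dx$ is handled exactly as in the proof of Proposition~\ref{inv1}: by the uniform measurability \eqref{um}, the measure of $\{x\in G : d(\theta_{\xe+y}\f,\ \theta_\xe f_\ep^{\om,x+\ep y}) > \delta\}$ — which after the change of variables $x\mapsto x-\ep y$ and using $\theta_{\xe}f_\ep^{\om+y,\,x}$ is the set controlled by \eqref{um} — tends to $0$, and $|G\triangle(G+\ep y)|\to 0$. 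Hence, up to $o(1)$,
\begin{equation*}
\int\vp\,d\Pey = \dashint_G \vp\(x,\ \theta_{\xe}\f,\ \theta_{\xe}\theta_{-y}\ue\)\,dx + o(1),
\end{equation*}
after undoing the shift. The right-hand side is $\int \vp(x,f,\theta_{-y}u)\,d\Pe(x,f,u) + o(1)$, which converges to $\int\vp(x,f,\theta_{-y}u)\,d\Po(x,f,u) = \int\vp\,d\Poy$ since $\Pe\to\Po$ and $(x,f,u)\mapsto\vp(x,f,\theta_{-y}u)$ is bounded and continuous (using the assumed continuity of the action on $X$, uniform on compacts, and on $\F$ via Lemma~\ref{gammatrans}). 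This gives $\Peny\to\Poy$. For the marginal statement, apply the above with test functions $\vp$ depending only on $(x,f)$: then $\vp(x,f,\theta_{-y}u)=\vp(x,f)$, so $\Qoy = \Qo$ directly — equivalently, by Proposition~\ref{inv1} applied to the family $\{\theta_y\ue\}$, the limit measure is $\theta$-invariant in a way that collapses the $y$-dependence on the $\F$-slot.

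\emph{Main obstacle.} The genuinely delicate point is the interchange in Step~2: one must justify replacing $\theta_{\xe+y}\f$ (equivalently $\theta_\xe f_\ep^{\om+y,x}$) by $\theta_\xe f_\ep^{\om,x}$ up to a set of vanishing measure, which is precisely where uniform measurability \eqref{um} enters, and one must be careful that the estimate is uniform in $\ep$ so the $o(1)$ is legitimate. The rest — the energy bound for $\Pey$, the change of variables on $G$, the continuity of $\vp\circ\theta_{-y}$ — is routine and essentially a rerun of Propositions~\ref{tight} and~\ref{inv1}. I would present Step~1 as a short computation, cite the argument of Proposition~\ref{inv1} for the measure-theoretic interchange rather than repeating it, and conclude with the one-line deduction of $\Qoy=\Qo$.
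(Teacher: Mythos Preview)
Your proof is correct and follows essentially the same route as the paper: both arguments rewrite $\Pey$ via stationarity as $\dashint_G \delta_{(x,\theta_{\xe+y}\f,\theta_\xe\ue)}$, invoke the shift-and-uniform-measurability manipulation of Proposition~\ref{inv1} to trade the extra $y$ in the functional slot for a $\theta_{-y}$ on the $u$-slot, and then pass to the limit using $\Pe\to\Po$ and the continuity of $(x,f,u)\mapsto\vp(x,f,\theta_{-y}u)$. The only cosmetic difference is the direction of the chain --- you start from $\Pey$ and land on $\Poy$, while the paper starts from $\Poy$ and identifies it as $\lim_\ep\Pey$ --- and your deduction of $\Qoy=\Qo$ by restricting to test functions independent of $u$ matches the paper's ``in particular'' remark.
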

\begin{proof} Consider a sequence $\{\ep\}$ converging to $0$ and such that $\Pe\to\Po$. As in the proof of the previous proposition and to lighten notation, we denote by $\ep$ a generic element of the sequence $\{\ep_n\}_n$ and write $\lim_{\ep\to 0}$ instead of $\lim_{n\to +\infty}$ in the rest of the proof.

For any $y\in\mr^n$, the push-forward $\Poy$ of $\Po$ by the map $(x,f,u)\to(x,f,\theta_{-y} u)$ is the limit of the push-forward of $\Pey$ by the same map as $\ep\to 0$. Thus, considering as in the proof of Proposition~\ref{inv1}  a bounded Lipschitz continuous function $\vp$ on $G\times\F\times X$ we have 
$$\int \vp(x,f,u)\,d\Poy(x,f,u) = \lim_{\ep\to 0} \dashint_G \vp\(x, \theta_\xe \f, \theta_{-y}(\theta_\xe\ue)\)\,dx.$$
As in the previous proposition, this is inturn equal to 
$$\lim_{\ep\to 0} \dashint_G \vp\(x, \theta_{\xe+y} \f, \theta_{-y}(\theta_{\xe+y}\ue)\)\,dx,$$
and since $\theta_{\xe+y} \f = f_\ep^{\om+y,x}$ we obtain
$$\int \vp(x,f,u)\,d\Poy(x,f,u) =\lim_{\ep\to 0} \dashint_G \vp\(x, \theta_{\xe} f_\ep^{\om+y,x}, \theta_{\xe}\ue)\)\,dx,$$
which proves precisely that $\Pey\to\Poy$.
\end{proof}

\subsection{Lower bounds}
We now reformulate the lower bound from Proposition~\ref{lb1}  with the help of the ergodic theorem and the invariance properties it implies for the Young measure $\Po$. Recall that a family $\{U_R\}_{R>0}$  of subsets of $\mr^n$ is a Vitali family (see \cite{nmriv}) if  (i) the intersection of their closures is $\{0\}$,   (ii)  $R\mapsto|U_R|$ is left continuous, and (iii) $|U_R - U_R|\le C|U_R|$ for some constant $C>0$ independent of $R$.

\begin{pro}\label{th1} Given $\om\in\Omega$, assume $\{\f\}_{x,\ep}$ is a family of nonnegative lower semicontinuous random  functionals satisfying the invariance property \eqref{invariance}, which is uniformly measurable with respect to $x$ and bounded below by a lower semicontinuous coercive functional $f_0$. Also assume $\{\f\}$ has the (M) property. Assume that $\E(\ue)\le C$ for any $\ep>0$, and let $\{\ep_n\}_n$ be a sequence tending to $0$ such that $\Pen\to \Po$ as $n\to +\infty$, where $\Pe$ is defined in \eqref{pe}. 

Then, denoting $Q^\om$ the marginal of $P^\om$ with respect to the first two variables,  we have 
\begin{equation}\label{binf}\begin{split}
\liminf_{n\to +\infty} \En(\uen) &\ge \int\(\limsup_{R\to+\infty} \dashint_{B_R} f(u,y)\,dy\) \,d\Po(x,f,u)\\
&\ge\int \inf_{u\in X} \(\limsup_{R\to+\infty} \dashint_{B_R} f(u,y)\,dy\) \,d\Qo(x,f).
\end{split}\end{equation}
Moreover, in the above statement, the family of balls $\{B_R\}_R$ may be replaced by any Vitali family of bounded open sets $\{U_R\}_{R>0}$ such that,  
\begin{equation}\label{hypsets}  
\forall\lambda\in\mr^n,\quad  \lim_{R\to +\infty} \frac{|(\lambda + U_R)\sd U_R
|}{|U_R|} = 0. \end{equation}
\end{pro}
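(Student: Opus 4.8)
The plan is to start from the lower bound already obtained in Proposition~\ref{lb1}, namely $\liminf_n \En(\uen) \ge \int f(u,0)\,d\Po(x,f,u)$, and to upgrade it using the invariance of $\Po$ under the action $\theta$ established in Proposition~\ref{inv1} together with the ergodic theorem. The first step is to disintegrate $\Po$ over its $(x,f)$-marginal $\Qo$, writing $d\Po(x,f,u) = d\Po_{x,f}(u)\,d\Qo(x,f)$; for $\Qo$-almost every $(x,f)$ the conditional measure $\Po_{x,f}$ is a probability measure on $X$. The goal is then to show that for $\Qo$-a.e.\ $(x,f)$,
$$\int_X f(u,0)\,d\Po_{x,f}(u) \ \ge\ \int_X \(\limsup_{R\to\infty}\dashint_{B_R} f(u,y)\,dy\)\,d\Po_{x,f}(u),$$
after which integrating against $d\Qo$ and bounding the inner integral below by $\inf_{u\in X}$ gives both lines of \eqref{binf}.

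To prove the displayed inequality, I would fix a generic $(x,f)$ and exploit $\theta$-invariance of $\Po$. The key point is that, because $\Po$ is $\theta$-invariant and $\theta_y$ acts on the $(f,u)$ slot simultaneously, one can apply a (multiparameter) pointwise ergodic theorem along the flow $y\mapsto\theta_y$ acting on $G\times\F\times X$: for any $L^1(\Po)$ observable $\Psi$, the Birkhoff averages $\dashint_{B_R}\Psi(\theta_y(x,f,u))\,dy$ converge $\Po$-a.e.\ as $R\to\infty$ to the conditional expectation of $\Psi$ with respect to the $\sigma$-algebra of $\theta$-invariant sets. Taking $\Psi(x,f,u) = f(u,0) = \Phi(f,u)$ (which is lower semicontinuous, hence measurable, and integrable since its integral is finite by Proposition~\ref{lb1}), we have $\Phi(\theta_y(x,f,u)) = (\theta_y f)(\theta_y u,0) = f(u,y)$ by the definition \eqref{act} of the action on functionals. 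Hence $\dashint_{B_R}\Phi(\theta_y(x,f,u))\,dy = \dashint_{B_R} f(u,y)\,dy$, and the ergodic theorem tells us this converges $\Po$-a.e.\ to $E[\Phi\mid\mathcal{I}]$, where $\mathcal{I}$ is the invariant $\sigma$-algebra. In particular the $\limsup$ is in fact a limit $\Po$-a.e., and its integral equals $\int\Phi\,d\Po = \int f(u,0)\,d\Po$ by the invariance of the conditional expectation under integration. This gives equality (not merely $\ge$) in the first line of \eqref{binf}; one then passes to $\Qo$ by disintegration and uses $\int_X(\cdots)\,d\Po_{x,f}(u)\ge \inf_{u\in X}(\cdots)$ pointwise in $(x,f)$ to get the second line, the measurability of $(x,f)\mapsto\inf_u(\limsup_R\dashint_{B_R}f(u,y)\,dy)$ following from lower semicontinuity arguments (Lemma~\ref{sci}) and separability of $X$.

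For the last sentence of the statement, replacing $\{B_R\}$ by a Vitali family $\{U_R\}$ satisfying \eqref{hypsets}: the condition \eqref{hypsets} is exactly the Følner-type asymptotic invariance property needed for the pointwise ergodic theorem to hold along the averaging sets $U_R$ (together with the Vitali conditions (i)--(iii), which provide the covering/regularity needed for the maximal inequality). So the same argument applies verbatim with $B_R$ replaced by $U_R$ throughout.

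\textbf{Main obstacle.} The technical heart is the justification of the multiparameter pointwise ergodic theorem for the $\mr^n$-action $\theta$ on the (merely Polish, non-locally-compact) space $\F\times X$ along the averaging family $\{U_R\}$ — in particular verifying that $y\mapsto\Phi(\theta_y(x,f,u))$ is jointly measurable (this uses Lemma~\ref{gammatrans} and the measurability lemmas already proved) and that the integrability $\int\Phi\,d\Po<\infty$ plus the Følner condition \eqref{hypsets} are enough to invoke a Wiener-type ergodic theorem. One must also be slightly careful that the action $\theta$ on $\Po$ need not be ergodic, so the limit is the conditional expectation $E[\Phi\mid\mathcal{I}]$ rather than a constant; this is harmless because we only integrate it back, but it explains why the bound is phrased as an integral over $\Po$ (resp.\ $\Qo$) rather than a single number. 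Everything else — the disintegration, the $\inf_u$ bound, the symmetric-difference estimates — is routine.
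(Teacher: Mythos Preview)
Your proposal is correct and follows essentially the same approach as the paper: start from Proposition~\ref{lb1}, invoke the $\theta$-invariance of $\Po$ from Proposition~\ref{inv1}, apply Wiener's multiparameter ergodic theorem to the observable $\Phi(f,u)=f(u,0)$ using the identity $\Phi(\theta_y(x,f,u))=f(u,y)$, and note that the same ergodic theorem covers the general Vitali families satisfying \eqref{hypsets}. The only difference is cosmetic: the paper dispenses with disintegration entirely (the second inequality is called ``trivial'' since replacing the integrand by $\inf_{u\in X}(\cdots)$ yields a function of $(x,f)$ alone, so the integral against $\Po$ automatically reduces to one against its marginal $\Qo$).
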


\begin{proof} The first inequality in \eqref{binf} is deduced from Proposition~\ref{lb1} and the invariance of $\Po$ under $\theta_y$. Indeed this invariance implies, using Wiener's multiparameter ergodic theorem (see \cite{becker}),  that 
\begin{multline*}\int f(u,0) \,d\Po(x,f,u) = \int\(\limsup_{R\to+\infty} \dashint_{B_R} \theta_yf(\theta_y u,0)\,dy\) \,d\Po(x,f,u) = \\
=\int\(\limsup_{R\to+\infty} \dashint_{B_R} f(u,y)\,dy\) \,d\Po(x,f,u).\end{multline*}
The same multiparameter ergodic theorem allows the more general families of sets described above. The second inequality in \eqref{binf} is trivial.
\end{proof}

We will now prove that under an ergodicity assumption the convergence holds almost surely, not only along subsequences. This is because under our assumptions, the ergodic theorem allows to identify a unique limit.

\begin{theo}\label{th2}  Assume that $\{\f\}$ is as in the previous proposition, and assume in addition that $\f$ $\Gamma$-converges as $\ep\to 0$ to  $\fb$, uniformly with respect to $\om$, $x$. Assume also that  the action $(\om,y)\to\om+y$ is ergodic.

Then, denoting $\Qe$  the marginal  of $\Pe$ with respect to the variables $(x,f)$, it holds almost surely that  
$$ \lim_{\ep\to 0} \Qe = \dashint_G\int\delta_{(x,\fb)}\,d\om\,dx.$$

In particular, if  $\{\Eo(\ue)\}$ is bounded then \eqref{binf} becomes, for almost every $\om_0$, 
\begin{equation}\label{relax}
\liminf_{\ep\to 0} \Eo(\ue) \ge \dashint_G\int\( \inf_{u\in X} \(\limsup_{R\to+\infty} \dashint_{B_R} \fb(u,y)\,dy\)\) \,d\om\,dx.\end{equation}
\end{theo}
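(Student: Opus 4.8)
The plan is to prove that the random measures $\Qe$ converge almost surely to the deterministic measure $\dashint_G\int\delta_{(x,\fb)}\,d\om\,dx$ by combining three ingredients: the $\Gamma$-convergence of $\f$ to $\fb$ uniform in $\om,x$, the stationarity of the family, and Birkhoff's (multiparameter) ergodic theorem. First I would reduce the statement to testing against a countable dense family of bounded Lipschitz functions $\vp$ on $G\times\F$, since $\F$ is a compact metric space and weak-$*$ convergence of probability measures is metrizable. So it suffices to show that, almost surely, for every such $\vp$,
\begin{equation*}
\lim_{\ep\to 0}\dashint_G \vp\(x,\theta_\xe\f\)\,dx = \dashint_G\int \vp(x,\fb)\,d\om\,dx.
\end{equation*}

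Next I would use the uniform $\Gamma$-convergence hypothesis to replace $\theta_\xe\f$ by $\theta_\xe\,\overline{f_\ep^{\om,x}}$ up to an error that vanishes uniformly in $x,\om$: more precisely, since $\f\tog\fb$ uniformly and $\Gamma$-convergence on $\F$ is metrized by $d$, we have $\sup_{x,\om} d(\f,\fb)\to 0$ as $\ep\to 0$; hence $\sup_{x,\om} d(\theta_\xe\f,\theta_\xe\fb)\to 0$ as well, because the action $\theta_y$ is continuous on $\F$ and in fact one can check it is equicontinuous enough for this purpose (this is where Lemma~\ref{gammatrans} and the metrizability from Proposition~\ref{compact} are used). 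Thus the problem reduces to proving
\begin{equation*}
\lim_{\ep\to 0}\dashint_G \vp\(x,\theta_\xe\fb\)\,dx = \dashint_G\int \vp(x,\fb)\,d\om\,dx \quad\text{a.s.}
\end{equation*}
Now $\fb$ is stationary: $\theta_y\fb = \overline{f^{\om+y,x}}$ by passing to the $\Gamma$-limit in \eqref{invariance} (using that $\Gamma$-convergence commutes with the continuous action $\theta_y$). So for fixed $x$, the map $\om\mapsto \vp(x,\overline{f^{\om+y,x}}) = \vp(x,\theta_y\fb)$ is a stationary function of $y\in\mr^n$ under the ergodic action. Applying the multiparameter (Wiener) ergodic theorem to the function $\om\mapsto\vp(x,\fb)$ — which is bounded and measurable — gives, for each fixed $x$ and for a.e.\ $\om$,
\begin{equation*}
\lim_{R\to\infty}\dashint_{B_R}\vp(x,\theta_y\fb)\,dy = \int\vp(x,\fb)\,d\om.
\end{equation*}
Here ergodicity is exactly what forces the limit to be the deterministic constant $\int\vp(x,\fb)\,d\om$ rather than a $\sigma$-invariant random variable.

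The remaining work is to go from this pointwise-in-$x$ statement at scale $R\to\infty$ to the averaged statement at scale $\xe$ with $\ep\to 0$, and to handle the joint measurability in $(x,\om)$ so that one can invoke Fubini and select a single null set valid for all $x$ simultaneously. I would argue as follows: write $\dashint_G\vp(x,\theta_\xe\fb)\,dx$ and change variables; a standard covering/averaging argument (the same one underlying the use of Vitali families in Proposition~\ref{th1}, together with the condition that $G$ is a fixed bounded domain and $\ep 2^{k_\ep}\to 0$ is not needed here, just $\ep\to0$) shows this is asymptotically the same as $\dashint_G\(\dashint_{B_{R_\ep}}\vp(x,\theta_y\fb)\,dy\)dx$ for a suitable $R_\ep\to\infty$, up to $o(1)$ using the boundedness and uniform continuity of $\vp$ in its second argument on the compact $\F$. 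Then dominated convergence (the integrand is bounded by $|\vp|_\infty$) together with the ergodic theorem applied $x$ by $x$ — after invoking Fubini to ensure the convergence holds for a.e.\ $(x,\om)$, hence for a.e.\ $\om$ and a.e.\ $x$ — yields the claimed limit. Finally, once $\Qe\to\dashint_G\int\delta_{(x,\fb)}\,d\om\,dx$ a.s., the lower bound \eqref{relax} follows by feeding this into the second line of \eqref{binf} in Proposition~\ref{th1}: the function $(x,f)\mapsto \inf_{u\in X}\(\limsup_{R\to\infty}\dashint_{B_R} f(u,y)\,dy\)$ is the one being integrated against $\Qo$, and since $\Qo = \dashint_G\int\delta_{(x,\fb)}\,d\om\,dx$ almost surely, the right-hand side of \eqref{binf} becomes exactly the right-hand side of \eqref{relax}.

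The main obstacle I anticipate is the measurability bookkeeping needed to combine the ergodic theorem (which a priori gives, for each fixed $x$, convergence off an $x$-dependent null set) into a single statement valid almost surely for the whole family; this requires verifying that $(x,\om)\mapsto\vp(x,\fb)$ is jointly measurable so that Fubini applies, which in turn rests on the Borel measurability statements established earlier in the excerpt (the maps $x\mapsto\theta_\xe\f$ and $(y,f,u)\mapsto f(u,y)$) passed to the $\Gamma$-limit. A secondary technical point is making rigorous the passage from the $\ep\to 0$ spatial average over $G$ to the $R\to\infty$ ball average, i.e.\ checking that the uniform $\Gamma$-convergence error and the averaging error both vanish; this is where the compactness and metrizability of $\F$ from Proposition~\ref{compact} do the essential work, since they make ``uniform $\Gamma$-convergence'' a genuinely quantitative statement in the metric $d$.
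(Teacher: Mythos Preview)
Your overall strategy matches the paper's: reduce from $\f$ to $\fb$ via the uniform $\Gamma$-convergence, then appeal to an ergodic theorem, then test against countably many Lipschitz $\vp$. But there is a genuine gap in the middle step, and it is precisely the point where the paper does the real work.

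The quantity you must control is $\dashint_G \vp\bigl(x,\theta_\xe\fb\bigr)\,dx = \dashint_G \vp\bigl(x,\underline f^{\om+x/\ep,x}\bigr)\,dx$. The integrand depends on $x$ in \emph{two} ways: through the slow variable in the second superscript of $\underline f^{\,\cdot\,,x}$, and through the fast shift $\om+x/\ep$. Your ``standard covering/averaging argument'' is meant to turn this into an ergodic average $\dashint_{B_{R_\ep}}\vp(x,\theta_y\fb)\,dy$ with fixed $x$, but that replacement is exactly what requires the \emph{uniform measurability} hypothesis, which you never invoke. Concretely, after the local averaging step one obtains integrands of the form $\vp(x+h,\underline f^{\om+(x+h)/\ep,\,x+h})$ with $h\in B_\eta$, and to freeze the slow slot at $x$ one needs $\sup_\om d(\underline f^{\om,x+h},\underline f^{\om,x})$ to be small for most $(x,h)$; this is precisely \eqref{um}. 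Without it, the slow and fast $x$-dependencies remain entangled and no ergodic theorem for a fixed $x$ applies.

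Relatedly, your Fubini plan for the null sets does not close the argument. The ergodic theorem gives, for each fixed $x$, convergence of $\dashint_{B_R}\vp(x,\underline f^{\om+y,x})\,dy$; Fubini then yields a.e.\ $(x,\om)$ convergence of \emph{these} ball averages. But the object you need to control is $\vp(x,\underline f^{\om+x/\ep,x})$, a single trajectory value, not a ball average; the two are only linked after the local-averaging-plus-uniform-measurability step above. The paper resolves this by (i) inserting a local average over $B_\eta$ in the $x$-variable, (ii) observing the resulting function of $x$ is $C/\eta$-Lipschitz and hence may be sampled on a \emph{finite} grid $G\cap(\mathbf x+\ell\mz^n)$, (iii) using uniform measurability to replace $\underline f^{\om+\cdot,x+h}$ by $\underline f^{\om+\cdot,x}$ at each grid point, and only then (iv) applying the Nguyen--Zessin ergodic theorem at the finitely many grid points, so that the exceptional set is a finite union of null sets and no Fubini juggling is needed. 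The parameters $\ell,\eta,\delta$ are then sent to $0$ in order.

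A minor point: your justification for $d(\theta_\xe\f,\theta_\xe\fb)\to 0$ via ``equicontinuity of $\theta_y$'' is not the right mechanism. The correct reason is stationarity: $\theta_\xe\f=f_\ep^{\om+x/\ep,x}$ and $\theta_\xe\fb=\underline f^{\om+x/\ep,x}$, and the hypothesis is that $d(f_\ep^{\om',x},\underline f^{\om',x})\to 0$ uniformly in $(\om',x)$; take $\om'=\om+x/\ep$.
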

The integrand in the right-hand side of \eqref{relax} can be seen as the effective ``infinite cell problem".

\begin{proof} From the uniform convergence of $\f$ to $\fb$, it is not difficult to check that $\lim_{\ep\to 0} \Qe$ exists iff  $ \lim_{\ep\to 0} \Qeb$ exists, and that both limits must then be equal, where
$$\Qeb := \dashint_G \delta_{(x,\fbe)}\,dx.$$

Then fix  a  function $\vp$ which is bounded and Lipschitz-continuous. We first  prove, and this is the essential fact,  that  almost surely, 
\begin{equation}\label{limvp}\lim_{\ep\to 0} \dashint_G \vp(x,\fbe)\,dx = \dashint_G\int \vp(x,\fb)\,d\om\,dx.\end{equation}
 The convergence will follow from the ergodic Theorem of Nguyen and Zessin \cite{nz} (see also the book by U.Krengel \cite{krengel}, Theorem 2.13) which implies that, given $x\in G$, $\eta>0$,  the local averages
$$ \dashint_{B_\eta} \vp(x,\underline f^{\om+\frac{x+h}\ep,x})\,dh$$
converge almost surely. Since the limit must clearly be invariant under $\om\to \om+y$ for every $y$, the hypothesis of ergodicity implies that it is equal almost surely to its expectation, and therefore 
\begin{equation}\label{pointwise}\forall x\in G,\forall\eta>0,\quad \lim_{\ep\to 0} \dashint_{B_\eta} \vp(x,\underline f^{\om+\frac{x+h}\ep,x})\,dh = \int \vp(x,\fb)\,d\om,\quad\text{almost surely}.\end{equation}

To make good use of this fact we transform the left-hand side of \eqref{limvp} by using local averages. We have 
\begin{equation}\label{jeps}I_\ep^\om := \dashint_G \vp(x,\fbe)\,dx  = \dashint_G\dashint_{B_\eta} \vp(x+h,\underline f^{\om+\frac{x+h}\ep,x+h})\,dh\,dx + O(\eta),\end{equation}
the error $O(\eta)$ being due to that part of the integral occuring in an $\eta$-neighbourhood of $\partial G$. In the course of this  proof we will denote $O(a)$ a quantity bounded by $C a$, where $C$ depends only on quantities which are fixed in the proof. In \eqref{jeps}, we have $O(\eta)<C\eta$ where  the constant $C$ depends only on $G$ and $\vp$ which remain fixed throughout the proof.

It is not dificult to check, from the boundedness of $\vp$, that the inner integral 
\begin{equation}\label{aeps}  \dashint_{B_\eta} \vp(x+h,\underline f^{\om+\frac{x+h}\ep,x+h})\,dh\end{equation}
is a Lipschitz function of $x$, with Lipschitz constant bounded by  $C/\eta$. Therefore  the right-hand side integral in \eqref{jeps} may be computed by sampling on a grid: For any $x_0\in\mr^d$ and any for any $\ell>0$ we obtain 
\begin{equation}\label{isum}I_\ep^\om =  \overline\sum_{x\in G\cap x_0+\ell\mz^d} \(\dashint_{B_\eta} \vp(x+h,\underline f^{\om+\frac{x+h}\ep,x+h})\,dh\)+  O\(\eta+\frac\ell\eta\).\end{equation}

The expression in \eqref{aeps}  does not quite agree with the one in \eqref{pointwise} for which we have convergence from the ergodic theorem. To relate them we need to use the uniform measurability hypothesis. For any $\delta>0$ let 
$$A_{\eta,\delta} = \left\{(x,h)\in G\times B_\eta\mid \sup_\om d(\underline f^{\om,x+h},\underline f^{\om,x}) > \delta\right\}.$$
Since the family $\{\f\}$ is uniformly measurable, so is the family $\{\fb\}$, which precisely means that the measure of the slices $\{x\mid (x,h)\in A_{\eta,\delta}\}$ tends to zero as $\eta$ tends to $0$, uniformly with respect to $h$, for any fixed $\delta>0$. By integrating with respect to $h\in B_\eta$ the measure of these slices it follows that for any $\delta>0$, 
\begin{equation}\label{unifmeas}\lim_{\eta\to 0}  \frac{|A_{\eta,\delta}|}{|G||B_\eta|} = 0.\end{equation}
Let us denote by $\mu_{x_0,\ell,\eta}$ the uniform probability measure on $G\cap x_0+\ell\mz^d$ tensored with the uniform probability measure on $B_\eta$. Then 
$$ \frac{|A_{\eta,\delta}|}{|G||B_\eta|} = \dashint_{[0,\ell[^d}\mu_{x,\ell,\eta}(A_{\eta,\delta})\,dx,$$
therefore there exists $\mathbf x = \mathbf x(\ell,\eta,\delta)$ such that 
$$ \mu(A_{\eta,\delta}) \le \frac{|A_{\eta,\delta}|}{|G||B_\eta|} ,\quad \text{where}\quad \mu := \mu_{\mathbf x,\ell,\eta}.$$ 
From the definition of $A_{\eta,\delta}$, if $(x,h)\notin A_{\eta,\delta}$ then $d(\underline f^{\om,x+h},\underline f^{\om,x}) \le\delta$ for any $\om$. Inserting this information in \eqref{isum} we find, letting $G_{\ell,\eta} = G\cap( \mathbf x+\ell\mz^d) $, that 
$$I_\ep^\om =  \overline\sum_{x\in G_{\ell,\eta}} \(\dashint_{B_\eta} \vp(x,\underline f^{\om+\frac{x+h}\ep,x})\,dh\) + O\(\eta+\frac\ell\eta+ \delta + \frac{|A_{\eta,\delta}|}{|G||B_\eta|}\).$$
We may now use \eqref{pointwise} for each of the points in $G_{\ell,\eta}$ to deduce that, for any $\eta,\ell, \delta>0$, almost surely, 
\begin{equation}\label{jsum}I_\ep^\om =  \overline\sum_{x\in G_{\ell,\eta}} \(\int \vp(x,\underline f^{\om,x})\,d\om\) + o_\ep(1)+ O\(\eta+\frac\ell\eta+ \delta + \frac{|A_{\eta,\delta}|}{|G||B_\eta|}\).\end{equation}

On the other hand, we may discretize in the same way the right-hand side of $\eqref{limvp}$: The choice of $\mathbf x$ insures as above that for any $\om$ we have 
\begin{equation*}
\begin{split}
\dashint_G \vp(x,\fb)\,dx  &= \dashint_G\dashint_{B_\eta}  \vp(x+h,\underline f^{\om,x+h})\,dh\,dx + O(\eta)\\
&= \overline\sum_{x\in G_{\ell,\eta}}\dashint_{B_\eta}  \vp(x+h,\underline f^{\om,x+h})\,dh + O\(\eta+\frac\ell\eta\)\\
&= \overline\sum_{x\in G_{\ell,\eta}}\dashint_{B_\eta} \vp(x,\underline f^{\om,x})\,dh  + O\(\eta+\frac\ell\eta+\delta + \frac{|A_{\eta,\delta}|}{|G||B_\eta|}\).
\end{split}
\end{equation*}
Since the integrand on the last line is independent of $h$, the integral with respect to $h$ may be removed, and integrating with respect to $\om$ we find that 
$$\dashint_G\int \vp(x,\fb)\,d\om\,dx = \overline\sum_{x\in G_{\ell,\eta}} \int \vp(x,\underline f^{\om,x})\,d\om + O\(\eta+\frac\ell\eta+\delta + \frac{|A_{\eta,\delta}|}{|G||B_\eta|}\).$$
This together with \eqref{jsum} proves that for any for any $\eta,\ell, \delta>0$, almost surely, 
$$\limsup_{\ep\to 0} \left|I_\ep^\om - \dashint_G\int \vp(x,\fb)\,d\om\,dx\right|\le \(\eta+\frac\ell\eta+\delta + \frac{|A_{\eta,\delta}|}{|G||B_\eta|}\).$$
Therefore, almost surely, this inequality will hold true  for $\eta,\ell,\delta$ belonging to the countable set $\{1/n,\ n\in\mn\}$. Letting $\ell$, then $\eta$, then $\delta$ tend to $0$ along this sequence, we deduce using in particular \eqref{unifmeas} that \eqref{limvp} holds.

To conclude the proof of the theorem, we choose a countable dense set of test-functions $\vp$, and note that from the above, almost-surely, \eqref{limvp} holds for every $\vp$ in this set. Now choose $\om$ such that this is the case. Then from Propostion~\ref{th1} and for any sequence $\{\ep_n\}_n$ tending to $0$, a subsequence $\{Q_{\ep_{n'}}^\om\}_{n'}$ converges as $n'\to +\infty$ to some probability measure $Q^\om$. But because \eqref{limvp} holds for a countable dense set of test-functions, we have 
$$Q^\om = \dashint_G\int\delta_{(x,\fb)}\,d\om\,dx.$$
In particular $Q^\om$ is clearly independent of the subsequence. Therefore the whole family $\{\Qe\}_\ep$ converges to $Q^\om$, and this holds almost-surely. 

The lower-bound \eqref{relax} is then simply a restatement of \eqref{binf}.
\end{proof}

\section{Convex random homogenization} \label{convex} In this section we establish all the hypotheses required by our framework for functionals of the type \eqref{dalmasosetting},  and how one can then recover the lower bound in  \cite{dalmasopaper,dmm}. In this case, the family $\{\f\}$ turns out not to depend on $x$ or $\ep$.

For the convenience of the reader we recall the lower-bound part of Theorem~I  in \cite{dalmasopaper}, slightly modifying the language used and the fact that we  replace the action of $\mathbb Z^n$ on $\Om$ by the action of $\mr^n$, which is unimportant as specialists know. 

Note that we have also replaced the assumption that $\fdm( ., y+z, p)$ and $\fdm( ., y, p)$ have same law by the stronger assumption that there exists a group of measure preserving transformations $\om\to\om +y$ such that $\fdm( \om, y+z, p)$ and $\fdm( \om+y, z, p)$.

In the following let 
\begin{equation}\label{QR} Q_R = (-R,R)^n.\end{equation}

\begin{theo}[\cite{dalmasopaper}]\label{dalmasoth} Let $F_\ep^\om$ be a random integral functional defined by 
\begin{equation}\label{lagdm}F_\ep^\om(v)= \dashint_G \fdm( \om, x/\ep, \nabla v(x)) dx,\end{equation}
where the Lagrangian $\fdm(\om, y,q)$ is a positive function convex in $q$ and measurable in $y$,   satisfying the stationarity and growth conditions
\begin{equation}\label{growth}\fdm(\om, z+y ,p) = \fdm(\om+y,z,p),\quad c_0 |q|^p \le \fdm(y,q) \le  C_0 (1+|q|)^p ,\end{equation}
and the action of $\mathbb{R}^n$ on $\Omega$ is ergodic.

Then  for almost every $\om$ and any sequence $\{v_\ep\}_\ep$ such that $\{F_\ep^\om(v_\ep)\}_\ep$ is bounded, we have \begin{equation} \label{lbdm}\liminf_{\ep\to 0} F_\ep^\om(v_\ep) \ge  \dashint_G f_*(\nabla v(x)) dx.\end{equation}
Here $f_*(q)$ satisfies the growth conditions \eqref{growth}  and is computed as follows
\begin{equation}\label{foq}f_*(q) := \lim_{R\to+\infty} \min_{\substack{u:Q_R\to\mr\\ \text{$u(y) =q\cdot y$ on $\partial Q_R$}}} \dashint_{Q_R} \fdm( \om, y, \nabla u(y)) dy.\end{equation}
Due to the ergodicity, the right-hand side of \eqref{foq} is constant a.e. with respect to  $\om.$
\end{theo}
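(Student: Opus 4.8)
The plan is to verify that the convex integral functional \eqref{lagdm} fits the abstract framework of Section~\ref{recast} and then to read off \eqref{lbdm} from \eqref{relax}.

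\medskip

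\noindent\textbf{Step 1: Setting up the function space and the integrand.} I would take $X = H^1_{\loc}(\mr^n)$ endowed with a metric inducing weak $H^1$-convergence on bounded sets together with a normalization that kills the additive constant (for instance identifying functions that differ by a constant, or fixing $\dashint_{B_1} u = 0$), so that $X$ is Polish and the translation action $\theta_y u = u(\cdot+y)$ is continuous in $u$ uniformly on compacts in $y$ and continuous in $y$. Following \eqref{5}, \eqref{dict1} I define $\f(u,y') = \dashint_{B(y',1)}\fdm(\om,z,\nabla u(z))\,dz$, which in this convex case does \emph{not} depend on $\ep$, and whose dependence on $x$ is trivial (so ``uniform measurability'' \eqref{um} is automatic). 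The growth bound \eqref{growth} guarantees $f$ is finite and, by convexity of $\fdm$ in $q$ together with weak lower semicontinuity of convex integrands, $f$ is lower semicontinuous on $X\times\mr^n$; the lower growth bound shows $f_0(u,y) := c_0\dashint_{B(y,1)}|\nabla u|^p$ is a coercive lower semicontinuous functional bounding $f$ from below (coercivity because an $L^p$ bound on $\nabla u$ on all balls, modulo constants, gives $H^1_{\loc}$-compactness via Rellich).

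\medskip

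\noindent\textbf{Step 2: Checking stationarity, (M), ergodicity, and the trivial $\Gamma$-limit.} Stationarity \eqref{invariance} is exactly the computation \eqref{dict2}, which reduces to $\fdm(\om,z+y,p)=\fdm(\om+y,z,p)$, i.e. the first hypothesis in \eqref{growth}. For the (M) property I would use the upper growth bound: if $R_\lambda f(u_0,y_0)\le M$ then any near-minimizer $(v,z)$ in \eqref{yosida} has $|y_0-z|$ and $d_X(u_0,v)$ controlled, and $f(v,z)\le M$ together with the lower bound $c_0\dashint_{B(z,1)}|\nabla v|^p \le M$ confines $v$ to a fixed weakly-$H^1$-compact set near $u_0$; thus the infimum is attained on a compact $K_{M,u_0,y_0,\lambda}$. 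The ergodicity hypothesis is assumed. Finally, since $\f$ is independent of $\ep$, it trivially $\Gamma$-converges to $\fb := \f$ as $\ep\to 0$, uniformly in $\om,x$, so Theorem~\ref{th2} applies. Its conclusion \eqref{relax} becomes, using that $\fb$ does not depend on $x$ and writing $f^\om := \underline f^\om$,
$$\liminf_{\ep\to 0} \Eo(\ue) \ge \dashint_G \int \inf_{u\in X}\Big(\limsup_{R\to\infty}\dashint_{B_R} f^\om(u,y)\,dy\Big)\,d\om\,dx.$$

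\medskip

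\noindent\textbf{Step 3: Identifying the cell-problem quantity with $f_*(q)$.} It remains to show that for a minimizing sequence $v_\ep$ of \eqref{lagdm} the limiting object equals $\dashint_G f_*(\nabla v(x))\,dx$. Here is where the argument is least ``abstract'': I would pass to a subsequence so that $\ue = \ep^{-1}v_\ep(\ep\,\cdot)$ has $\Pen\to\Po$, note that \eqref{binf}/\eqref{relax} gives the lower bound in terms of $\int \inf_u(\limsup_R \dashint_{B_R} f^\om(u,y)\,dy)\,d\Qo$, and then argue two things. First, by the subadditive ergodic theorem (Akcoglu--Krengel), for each fixed $q$ the limit in \eqref{foq} exists a.e., is $\om$-independent by ergodicity, and equals $\inf_u \limsup_R \dashint_{B_R} f^\om(u,y)\,dy$ when the infimum is restricted to functions whose gradient has mean $q$ — the boundary condition $u(y)=q\cdot y$ on $\partial Q_R$ and the affine-on-the-boundary test functions are interchangeable up to $o(1)$ by a standard De Giorgi slicing/cutoff near $\partial Q_R$ using the $p$-growth bound. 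Second, one must keep track of the macroscopic gradient: the marginal $\Po$ must ``see'' $\nabla v(x)$, i.e. under $\Po$ the profile $u$ has $\nabla u$ with mean $\nabla v(x)$; this comes from the fact that $\dashint_{B(x/\ep, R)}\nabla \ue \to \nabla v(x)$ for a.e.\ $x$ because $v_\ep \to v$ with $\nabla v_\ep \rightharpoonup \nabla v$, combined with the disintegration of $\Po$ over the $x$-variable. Jensen's inequality then lets me replace $\inf_u$ over all profiles by $\inf$ over profiles with the correct mean gradient, producing exactly $f_*(\nabla v(x))$, and the $\dashint_G \cdots d\om$ collapses since $f_*$ is deterministic.

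\medskip

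\noindent\textbf{Main obstacle.} The routine verifications (Polishness of $X$, lower semicontinuity, coercivity, (M), trivial $\Gamma$-limit) are genuinely routine. The real work is Step~3: correctly matching the abstract cell problem $\inf_u \limsup_R \dashint_{B_R} f^\om(u,y)\,dy$ — an \emph{unconstrained} infimum over the whole profile space — with the \emph{boundary-constrained} Dal Maso--Modica cell formula \eqref{foq}, and in particular recovering the macroscopic gradient $\nabla v(x)$ inside the integral. This requires (a) the subadditive ergodic theorem to get existence and $\om$-independence of \eqref{foq}, (b) a boundary-layer construction to pass between affine boundary data on $Q_R$ and the free problem on $B_R$, and (c) an argument that the Young measure $\Po$ transports the information $\nabla v(x)$ to the mean gradient of the profile $u$, so that Jensen's inequality upgrades the unconstrained infimum to the value $f_*(\nabla v(x))$. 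I expect (c) to be the subtlest point, since it is exactly the place where the ``two-scale'' nature of $\Po$ does the bookkeeping that convexity makes unnecessary in the classical proof.
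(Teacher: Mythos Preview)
Your Steps~1--2 are fine and essentially match the paper (the paper takes $X=L^p_{\loc}$ modulo constants rather than $H^1_{\loc}$, but this is cosmetic). The gap is in Step~3, and it is structural rather than technical.

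You invoke \eqref{relax}, which already contains the \emph{unconstrained} infimum $\inf_{u\in X}$. Once that infimum is taken, the right-hand side no longer depends on the sequence $\{v_\ep\}$ at all: since here $\fb$ is independent of $x$, the quantity $\inf_{u}\limsup_R\dashint_{B_R} f^\om(u,y)\,dy$ is a constant (equal to $\inf_q f_*(q)$ once you match it with the cell formula), and \eqref{relax} only yields $\liminf_\ep F_\ep^\om(v_\ep)\ge \inf_q f_*(q)$. You cannot recover $\nabla v(x)$ from this, and your sentence ``Jensen's inequality then lets me replace $\inf_u$ over all profiles by $\inf$ over profiles with the correct mean gradient'' goes the wrong way: restricting the domain of an infimum can only increase it, and Jensen does not help here.

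What the paper does---and what you must do---is stop at the \emph{first} line of \eqref{binf}, before the infimum is taken, so that the measure $P^\om$ still carries the profile $u$. The processing then runs: (i) show that for $P^\om$-a.e.\ $u$ the blow-downs $u_R(x)=R^{-1}u(Rx)$ converge in $L^p_{\loc}$ to a linear map $x\mapsto q(u)\cdot x$ (this is a genuine lemma, Proposition~\ref{limblowdown}, using $L^{p^*}$-differentiability of $W^{1,p}$ functions and translation-invariance of $P^\om$; weak convergence $\nabla v_\ep\rightharpoonup\nabla v$ alone does not give it); (ii) use the cutoff/boundary-layer to bound $\limsup_R\dashint_{Q_R}\gdm(y,\nabla u)\,dy$ below by $\liminf_R m_R(q(u),\gdm)$; (iii) use ergodicity to separate the $u$- and $\gdm$-integrations; (iv) apply Jensen to the convex map $q\mapsto \lim_R m_R(q,\gdm)$ to pass from $\int m_R(q(u),\gdm)\,dP^\om_x(u)$ to $m_R(q_x,\gdm)$ with $q_x=\int q(u)\,dP^\om_x(u)$; (v) identify $q_x=\nabla v(x)$. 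Your ingredients (a)--(c) are all present in this list, but the order matters: Jensen acts on $q\mapsto f_*(q)$ \emph{after} $q(u)$ has been extracted, not on the profile infimum.
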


This theorem shows that for Lagrangians of the form \eqref{lagdm}, the general bound \eqref{step1}  implies the simpler  lower bound \eqref{lbdm}  which can be computed via expanding cell problems.

\subsection{Extended coercive integral functionals}

To recast the above problem in our setting, we follow the procedure sketched in Section~\ref{recast}.

For some given $p>1$, we let $X$ be the space of functions on $\mr^n$ modulo constants (i.e. two functions which differ by a constant are considered equal) on which the topology is that of $\Lp$ convergence. Thus a sequence $\{u_n\}_n$ converges to $u$ if there exists a sequence of real numbers $\{c_n\}_n$ such that $u_n +c_n\to u$ in $L^p(K)$ for any compact subset $K$  of $\mr^n$. The space  $\Lp$ is a separable metric space for the distance 
\begin{equation}\label{dp} d_p(f,g) = \sum_{n=1}^{+\infty} 2^{-n} \min\(\|f-g\|_{L^p(B(0,n))},1\).\end{equation}
From this distance we deduce a distance on $X$ defined as 
$$ d_X(f,g) = \inf_{c\in\mr} d_p(f+c, g) =  \inf_{c,c'\in\mr} d_p(f+c, g+c').$$
The last equality insures that $d_X$ is symmetric and satisfies the triangle inequality. The space $X$ is obviously complete and separable.

On $X$ we consider the class $\F_0$ of functionals of the type 
\begin{equation}\label{fo} f(u,y) = \begin{cases}\D \dashint_{B(y,1)} \fdm(z, \nabla u(z))\,dz &\text{if $\nabla u\in L^p(B(y,1))$} \\ +\infty &\text{otherwise,}\end{cases}\end{equation}
where $\fdm(y,q)$ is a positive function convex in $q$ and measurable in $y$ satisfying the growth condition 
$$c_0 |q|^p \le \fdm(y,q) \le  C_0 (1+|q|)^p .$$
Here $c_0$ and $C_0$ are fixed positive constants. 

The action $\theta$ on $X$ is $\theta_y u =u(\cdot+y)$, from which we deduce through \eqref{act} that 
\begin{equation}\label{trans} \theta_{y'} f(u,y) = \begin{cases}\D \dashint_{B(y,1)} \fdm(z+y',\nabla u(z))\,dz &\text{ if $\nabla u\in L^p(B(y,1))$ }\\ +\infty &\text{ otherwise.}\end{cases}
\end{equation}

We check the hypotheses necessary to apply our framework.
\begin{lem} The action $\theta$ is continuous with respect to $y$ and uniformly continuous with respect to $u$ relatively to $y\in K$, for any bounded $K\subset\mr^n$.\end{lem}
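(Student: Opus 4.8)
The plan is to verify the two continuity statements directly from the concrete description of $\theta$ on $X$, namely $\theta_y u = u(\cdot + y)$, using the metric $d_X$ built from $d_p$.

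\textbf{Uniform continuity in $u$, locally uniformly in $y$.} Fix a bounded set $K\subset\mr^n$, say $K\subset B(0,\rho)$, and let $\delta>0$. I would bound $d_X(\theta_y u, \theta_y v)$ in terms of $d_X(u,v)$ and show that a single modulus works for all $y\in K$. Since $d_X$ is translation-defined through the family of seminorms $\|\cdot\|_{L^p(B(0,n))}$, the key point is that translating by $y\in B(0,\rho)$ maps $B(0,n)$ into $B(0,n+\rho)$; hence for the terms with $n\le N$ one gets control by $\|u-v\|_{L^p(B(0,N+\rho))}$ up to a constant, and the tail $\sum_{n>N} 2^{-n}$ is small once $N$ is large. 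More precisely, first choose $N$ so that $\sum_{n>N}2^{-n}<\delta/3$; then choose $\eta$ so small that $d_p(u+c,v)<\eta$ (for the optimal constant $c$, or near-optimal, in the definition of $d_X$) forces $\|u+c-v\|_{L^p(B(0,N+\rho))}<\delta/3$; then $d_X(u,v)$ small enough gives $d_X(\theta_y u,\theta_y v)<\delta$ uniformly in $y\in K$. The only mild technical care needed is handling the infimum over constants in $d_X$: since translation commutes with adding constants, the optimal (or near-optimal) constant for the pair $(\theta_y u,\theta_y v)$ is the same as for $(u,v)$, so this causes no trouble.

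\textbf{Continuity in $y$.} Fix $u\in X$ and $y_0\in\mr^n$; I must show $d_X(\theta_y u,\theta_{y_0}u)\to 0$ as $y\to y_0$. Replacing $u$ by $\theta_{y_0}u$ and $y$ by $y-y_0$, it suffices to prove continuity at $y_0=0$, i.e. $\|u(\cdot+y)-u\|_{L^p(B(0,n))}\to 0$ as $y\to 0$, for each fixed $n$; then the usual $2^{-n}$-tail argument upgrades this to convergence in $d_p$, hence in $d_X$. This is exactly the standard continuity of translations in $L^p_{\mathrm{loc}}$: approximate $u$ in $L^p(B(0,n+1))$ by a continuous (or smooth compactly supported) function $\varphi$, use uniform continuity of $\varphi$ on a neighborhood of $\overline{B(0,n)}$ to handle $\|\varphi(\cdot+y)-\varphi\|_{L^p(B(0,n))}$, and control the two error terms $\|u-\varphi\|$ and $\|u(\cdot+y)-\varphi(\cdot+y)\|$ by the approximation, the latter being bounded uniformly for $|y|\le 1$ by $\|u-\varphi\|_{L^p(B(0,n+1))}$.

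\textbf{Main obstacle.} There is no real obstacle here — both statements are soft consequences of the density of continuous functions in $L^p_{\mathrm{loc}}$ and the explicit form of the metric; the only thing to be careful about is the bookkeeping with the infimum over additive constants in the definition of $d_X$ and the fact that the relevant translates stay inside slightly larger balls, which is why the local uniformity in $y$ (restricting to bounded $K$) is needed. The argument is entirely elementary and I would present it concisely, emphasizing the reduction to $y_0=0$ and the two-scale ($N$ large, then $\eta$ small) structure.
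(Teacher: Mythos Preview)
Your proposal is correct and follows essentially the same approach as the paper: both arguments hinge on the elementary observation that for $y$ in a bounded set $K\subset B(0,\rho)$ the translated ball $B(0,n)+y$ sits inside $B(0,n+\rho)$, so control on a slightly larger ball yields uniform control after translation, and the additive constants commute with $\theta_y$. The only difference is presentational: the paper argues via sequences (if $u_n\to u$ in $X$ and $y_n\in K$, then $\theta_{y_n}u_n\to\theta_{y_n}u$), while you spell out the explicit $\varepsilon$--$\delta$ estimate through the metric $d_X$ with the tail/truncation bookkeeping; similarly, for continuity in $y$ the paper simply invokes continuity of translations in $L^p_{\mathrm{loc}}$ as known, whereas you sketch the standard density-of-continuous-functions proof.
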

\begin{proof} If $y_n$ converges to $y$, $u\in X$ and $R>1$  then $u\in L^p(B(y,R))$ thus $\theta_{y_n}u\to\theta_{y} u$ in $L^p(B(y,R-1))$. Since this is true for any $R$, we have $\theta_{y_n} u\to \theta_y  u$ in $\Lp$.

We now prove the uniform continuity in $u$. Assume $R>0$ and let $\{y_n\}$ be a sequence in $B(0,R)$. If $u_n\to u$ in $X$, there exists constants $\{c_n\}_n$ such that we have for any $R'>0$ that $ u_n+c_n\to u$ in $L^p(B(0,R'))$ therefore
$\theta_{y_n} (u_n +c_n) - \theta_{y_n} u$ tends to $0$ in $L^p(B(0,R'))$. Since this is true for any $R'$, we obtain the convergence of $\theta_{y_n} (u_n+c_n) - \theta_{y_n} \nabla u$ to $0$ in $\Lp$, thus the convergence of $u_n$ to $u$ in $X$  and the desired uniform continuity.
\end{proof}

We also have 
\begin{lem} The functionals in $\F_0$ are lower semicontinuous, and bounded below by a lower semicontinuous coercive functional $f_0$.\end{lem}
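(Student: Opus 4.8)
The plan is to first exhibit an explicit coercive minorant and then to establish lower semicontinuity of every functional in $\F_0$; the minorization statement will then be essentially free. I would take
\[ f_0(u,y) := \begin{cases} c_0\,\dashint_{B(y,1)}|\nabla u(z)|^p\,dz &\text{if }\nabla u\in L^p(B(y,1)),\\ +\infty &\text{otherwise.}\end{cases} \]
Since every Lagrangian $\fdm$ entering \eqref{fo} satisfies $\fdm(z,q)\ge c_0|q|^p$ with the \emph{same} constant $c_0$, one has $f_0\le f$ for every $f\in\F_0$ (in both the finite and the $+\infty$ alternatives). Moreover $f_0$ is itself of the form \eqref{fo} (take $\fdm(z,q)=c_0|q|^p$), so its lower semicontinuity will be a special case of the general statement.

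For lower semicontinuity, I would take $f\in\F_0$ with Lagrangian $\fdm$ and a sequence $(u_n,y_n)\to(u,y)$ in $X\times\mr^n$, and assume after extraction that $\lim_n f(u_n,y_n)=\ell<+\infty$ (otherwise nothing to prove), so that $\nabla u_n\in L^p(B(y_n,1))$ for large $n$. Fixing $\delta\in(0,1)$, for large $n$ one has $B(y,1-\delta)\subset B(y_n,1)$, and the lower growth bound gives a uniform bound on $\|\nabla u_n\|_{L^p(B(y,1-\delta))}$. Because $u_n\to u$ in $X$, after adding suitable constants $c_n$ (which do not change the functional, as it depends on $u_n$ only through $\nabla u_n$) the sequence $u_n+c_n$ is bounded in $W^{1,p}(B(y,1-\delta))$, hence $\nabla u_n\rightharpoonup\nabla u$ weakly in $L^p(B(y,1-\delta))$ along a further subsequence. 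I would then invoke the classical weak lower semicontinuity theorem for convex integral functionals — legitimate since a finite convex function $\fdm(z,\cdot)$ is continuous, so $\fdm$ is a nonnegative Carathéodory integrand — on the \emph{fixed} ball $B(y,1-\delta)$, obtaining $\liminf_n\int_{B(y,1-\delta)}\fdm(z,\nabla u_n)\,dz\ge\int_{B(y,1-\delta)}\fdm(z,\nabla u)\,dz$; since $\fdm\ge0$ and $B(y,1-\delta)\subset B(y_n,1)$, the left-hand side is at most $|B_1|\,\ell$. Finally I would let $\delta\to0$, using the uniform $L^p$-bound and monotone convergence to conclude $\nabla u\in L^p(B(y,1))$, and the upper growth bound together with dominated convergence to identify $\lim_{\delta\to0}\int_{B(y,1-\delta)}\fdm(z,\nabla u)\,dz=|B_1|\,f(u,y)$; this gives $\ell\ge f(u,y)$.

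For coercivity of $f_0$, given a threshold $C:(0,+\infty)\to(0,+\infty)$ and the associated sublevel set $S\subset X$, I would first note that $u\in S$ forces $\nabla u\in L^p_{\loc}$, and then compute $\dashint_{B_R}f_0(u,y)\,dy$ by Fubini, keeping only the contribution of those $z\in B(0,R-1)$ (for which $B(z,1)\subset B(0,R)$), to get $\|\nabla u\|_{L^p(B(0,R-1))}\le c(R,C,c_0)$. As $R$ is arbitrary, the gradients of elements of $S$ are bounded in $L^p$ on every ball; normalizing each $u\in S$ by an additive constant (harmless in $X$) and applying the Poincaré inequality, $S$ is bounded in $W^{1,p}(B_\rho)$ for every $\rho$, hence precompact in $X$ by Rellich--Kondrachov. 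Closedness of $S$ follows because, for each $R$, the map $u\mapsto\dashint_{B_R}f_0(u,y)\,dy$ is lower semicontinuous on $X$ (from the lower semicontinuity of $f_0$ and Fatou's lemma), so $S$ is an intersection of closed sets; thus $S$ is compact.

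The main obstacle is the lower semicontinuity step. Two points in it need care: the domain of integration $B(y_n,1)$ varies with $n$, which is handled by shrinking to a fixed ball $B(y,1-\delta)$ and sending $\delta\to0$ at the end; and the topology on $X$ being $L^p_{\loc}$ modulo constants, which is handled by applying the weak $W^{1,p}$-compactness to the shifted functions $u_n+c_n$ and exploiting that $f$ depends on $u$ only through $\nabla u$. Everything else — the Fubini computation, Poincaré, Rellich--Kondrachov, and the Fatou-type argument for closedness — is routine.
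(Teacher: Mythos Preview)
Your proof is correct and follows essentially the same route as the paper: the same choice of $f_0$, the same lower-semicontinuity argument (shrink to a fixed ball $B(y,1-\delta)\subset B(y_n,1)$, use weak $W^{1,p}$-compactness and weak lower semicontinuity of the convex integrand, then let $\delta\to0$), and the same coercivity argument via a uniform gradient bound, Poincar\'e, and Rellich--Kondrachov. One small improvement over the paper is that you explicitly verify closedness of the sublevel set (via Fatou and lower semicontinuity of $f_0$), whereas the paper only checks relative compactness; your observation that the upper growth bound and dominated convergence identify the $\delta\to0$ limit is harmless but unnecessary, since monotone convergence on the nonnegative integrand already gives $\int_{B(y,1)}\fdm(z,\nabla u)\,dz\le|B_1|\,\ell$.
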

\begin{proof} We begin with the lower semicontinuity. Assume that $f\in\F_0$, that $u_n\to u$ in $X$ (i.e. $u_n+c_n\to u$ in $\Lp$) and that $y_n\to y$. If $\liminf_n f(u_n,y_n) = +\infty$ there is nothing to prove. Otherwise, we consider a subsequence (not relabeled) which realizes the $\liminf$, hence satisfies $f(u_n,y_n)\to \ell\in\mr_+$. Then any ball $B$ such that $\overline B\subset B(y,1)$ is included in $B(y_n,1)$ if $n$ is large enough hence, letting $\fdm$ be the Lagrangian associated to $f$, we have 
$$\limsup_n \dashint_B \fdm(z,\nabla u_n)\,dz \le \limsup_n \dashint_{B(y_n,1)} \fdm(z, \nabla u_n)\,dz = \limsup_n f(u_n,y_n) = \ell.$$
It follows that $\{u_n+c_n\}$ is bounded in $W^{1,p}(B)$ for any such $B$ and then that a subsequence converges weakly in $W^{1,p}(B)$ and strongly in $L^p(B)$ by compact embedding, to $u$. Moreover,
$$\dashint_B \fdm(z,\nabla u)\,dz \le \liminf_n \dashint_{B} \fdm(z, \nabla u_n)\,dz \le \ell.$$
It follows by taking a sequence $B_k\nearrow B(y,1)$ that $u\in W^{1,p}(B(y,1))$ and that $f(u,y)\le \ell$.

As a coercive lower semicontinuous functional bounding from below every $f\in\F_0$ we choose 
$$ f_0(u,y) = \begin{cases} +\infty & \text{If $u\notin W^{1,p}(B(y,1))$}\\\displaystyle
c_0\dashint_{B(y,1)}  |\nabla u(z)|^p \,dz & \text{otherwise}\end{cases}$$
It is clear that $f_0$ bounds $f$ from below. The lower semicontinuity of $f_0$ is proved as above. To see that $f_0$ is coercive, we assume that $\{C_R\}_{R>0}$ are arbitrary positive numbers and that $\{u_n\}_n$ is such that $\dashint_{B_R} f_0(u_n) \le C_R$ for every $R$. 

Then   for any $R>0$, $\{\nabla u_n\}_n$ is bounded in $L^p(B_R)$, and we may choose $c_n\in\mr$ such that for instance $\dashint_{B(0,1)} (u_n+c_n) = 0$. This together with the gradient bound implies for any fixed $R$ a bound in $L^p(B_R)$ for $u_n+c_n$, by a generalized Poincaré inequality as \cite{GT}~(7.45). Thus $\{u_n+c_n\}$ is bounded in $W^{1,p}(B_R)$   and there exists  a subsequence which converges strongly in $L^p(B_R)$. Then, also, by a diagonal argument, the existence of a subsequence which converges in $\Lp$ to some $u$ follows (note that $c_n$ does not depend on $R$) . It follows that $u_n\to u$ in $X$, and thus  the set of $u$'s satisfying $\dashint_{B_R} f_0(u) \le C_R$ for every $R$ is relatively compact, and  $f_0$ is coercive.
\end{proof}

We pursue with  the less trivial  (M) property.
\begin{pro} The family $\F_0$ satisfies the (M) property.\end{pro}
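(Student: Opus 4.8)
The plan is to verify Definition~\ref{defiM}: produce a coercive l.s.c.~functional $f_0$ (already chosen, namely $f_0(u,y)=c_0\dashint_{B(y,1)}|\nabla u|^p$ with the value $+\infty$ off $W^{1,p}(B(y,1))$, valid uniformly for the whole class $\F_0$) and, for each $M,\lambda>0$, $u\in X$, $y\in\mr^n$, a compact subset $K_{M,u,y,\lambda}\subset X\times\mr^n$ such that whenever $R_\lambda f(u,y)<M$ the infimum defining $R_\lambda f(u,y)$ — an infimum over $(v,z)\in X\times\mr^n$ of $g(v,z)=f(v,z)+\lambda(d_X(u,v)+|y-z|)$ — is unchanged if we restrict $(v,z)$ to $K_{M,u,y,\lambda}$. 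The point is that $K$ must be the \emph{same} for every $f$ in the family, which is exactly why the uniform growth constants $c_0,C_0$ are built into the definition of $\F_0$.

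First I would localize the $z$-variable: if $R_\lambda f(u,y)<M$, then $g(v,z)<M$ for some $(v,z)$, and since $f\ge 0$ this forces $\lambda|y-z|<M$, so we may restrict to $z\in\overline{B(y,M/\lambda)}$, a fixed compact set. Next, for such $(v,z)$ we have $f(v,z)<M$, hence $\nabla v\in L^p(B(z,1))$ with $c_0\dashint_{B(z,1)}|\nabla v|^p\le f(v,z)<M$; as $z$ ranges over $\overline{B(y,M/\lambda)}$ the balls $B(z,1)$ are contained in the fixed ball $B(y,1+M/\lambda)=:B$, so $\|\nabla v\|_{L^p(B)}$ is bounded by a constant depending only on $M,\lambda,c_0,n$. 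Simultaneously $d_X(u,v)<M/\lambda$, which pins down $v$ in $X$ up to the quotient by constants near $u$; choosing the representative of $v$ with, say, $\dashint_{B(0,1)}(v-u)=0$ and combining the gradient bound on $B$ with a generalized Poincaré inequality (as in \cite{GT}~(7.45)) gives a $W^{1,p}(B)$ bound and, via the $\Lp$ definition of $d_X$ together with the $d_X$-bound, an $\Lp$ bound on all balls, so that the admissible $v$'s lie in a set that is precompact in $X$ — this is essentially the coercivity argument for $f_0$ run on a fixed ball. Taking closures, $K_{M,u,y,\lambda}$ is the (compact) product of this precompact set of $v$'s with $\overline{B(y,M/\lambda)}$, and it depends on $f$ only through the uniform data $c_0,M,\lambda,u,y$.

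The one subtlety — and the step I expect to be the genuine obstacle — is matching the $d_X$-metric (which compares functions in $L^p$ norm on balls $B(0,n)$, independent of $y$) with the gradient control, which lives on balls centered at $z$ near $y$: one must check that the "free constant" in $X$ can be fixed consistently so that a bound in $L^p$ of $\nabla v$ on $B$ plus a bound $d_X(u,v)<M/\lambda$ really yields precompactness in $X$ (i.e.\ simultaneous $L^p$-bounds on a diagonal sequence of balls $B(0,k)$, with the constants $c_k$ independent of $k$). This is handled exactly as in the proof that $f_0$ is coercive: fix the representative by normalizing the average on $B(0,1)$, use the gradient bound on the relevant ball to get $W^{1,p}$ bounds on larger and larger balls, and then use $d_X(u,v)$ small to transfer these to the quotient space $X$; a diagonal extraction yields a limit in $X$. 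Once this is in place, lower semicontinuity of $g$ (from Lemma giving l.s.c.\ of $f$ and continuity of the $\lambda(d_X(u,\cdot)+|y-\cdot|)$ term) guarantees the infimum over $X\times\mr^n$ is attained on $K_{M,u,y,\lambda}$, or at least is approached by a sequence in it that accumulates in it, which is all that Definition~\ref{defiM} requires.
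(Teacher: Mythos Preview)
Your approach has a genuine gap in the precompactness step, and it is precisely the ``one subtlety'' you flag but do not resolve.

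The bound $f(v,z)<M$ gives you $\|\nabla v\|_{L^p(B(z,1))}$ bounded, \emph{only on the small ball $B(z,1)$}. Your claim that ``the balls $B(z,1)$ are contained in $B=B(y,1+M/\lambda)$, so $\|\nabla v\|_{L^p(B)}$ is bounded'' has the containment going the wrong way: control on a subset does not give control on the superset. So you have a gradient bound on $B(z,1)$ and nothing else. Outside $B(z,1)$, $v$ can be arbitrary. The constraint $d_X(u,v)<M/\lambda$ is far too weak to compensate: recall $d_X$ is a weighted sum of $\min(\|\cdot\|_{L^p(B(0,n))},1)$ with weights $2^{-n}$, so a bound on $d_X$ imposes essentially no $L^p$ constraint on distant balls. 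Thus the set of admissible $(v,z)$ is not precompact in $X\times\mr^n$, and your proposed fix (``use the gradient bound on the relevant ball to get $W^{1,p}$ bounds on larger and larger balls'') cannot work because there is no gradient bound on larger balls.

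The paper's proof supplies exactly the missing idea: it exploits the \emph{locality} of $f$. Since $f(v,z)$ depends only on $\nabla v$ on $B(z,1)$, one may replace $v$ outside $B(z,1)$ by the fixed reference function $u_0$ (and add a constant on $B(z,1)$ chosen to minimize the $L^p$ gap with $u_0$ there). This replacement leaves $f(v,z)$ unchanged and can only decrease $d_X(v,u_0)$, so the infimum defining $R_\lambda f(u_0,y_0)$ is unchanged when restricted to such replaced functions. Now compactness is easy: the replaced functions all equal $u_0$ outside a fixed bounded region, and on $B(z,1)$ they carry a uniform $W^{1,p}$ bound (gradient from $f_0\le M$, $L^p$ from the optimal choice of constant plus Poincar\'e). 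This localization trick is the heart of the argument and is what your proposal is missing.
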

\begin{proof} Assume $\lambda$, $M$ are positive and that $(u_0,y_0)\in \Lp\times\mr$. For any $(u,y)\in\Lp\times \mr$ we let $c\in\mr$  minimize $\|u+c-u_0\|_{L^p(B(y,1))}$  and 
$$v =  \begin{cases} u+c  & \text{on  $B(y,1)$, }\\
 u_0 & \text{elsewhere.}\end{cases}$$
 Then $f(u,y) = f(v,y)$ and $d_X(u,u_0)\ge d_X(v,u_0)$ hence
 $$F^\lambda(u,y) := f(u,y) + \lambda \(d_X(u,u_0) + |y-y_0|\)  \ge f(v,y) + \lambda \(d_X(v,u_0) + |y-y_0|\).$$
 This implies that 
 $$R_\lambda f(u_0,y_0) = \inf_{\Lp\times\mr} F^\lambda = \inf_{(u,y)\in\Lp\times\mr} F^\lambda(v,y).$$
 Therefore, assuming $R^\lambda f\le M$, we find  that the last infimum may be taken over the set $K$ of $v$'s and $y$'s such that 
 \begin{equation}\label{optimalc}\text{or any $c\in\mr$,}\quad \|v-u_0\|_{L^p(B(y,1))}\le \|v+c-u_0\|_{L^p(B(y,1))},\end{equation}
 and 
 $$|y-y_0|\le \frac{2M}\lambda,\quad f_0(v,y)\le M,\quad \text{$v=u_0$ outside $B(y,1)$}.$$
 Let us now show that this set is compact. Assume $\{y_n\}$ and $\{v_n\}$ satisfy the above, then after extracting a subsequence we first  have $y_n\to y$.
 
 Then the bound $f_0(v_n,y_n)\le M$ implies that the norm of $\nabla v_n$ in $L^p(B(y_n,1))$ is bounded. In turn this implies that, denoting by $\bar v_n$ the average of $v_n$ over $B(y_n,1)$, the $L^p$ norm of $v_n - \bar v_n$ over $B(y_n,1)$ is bounded. Using \eqref{optimalc} with $c=-\bar v_n$ we find that $v_n$ is bounded in $L^p(B(y_n,1))$ too, and then that $v_n$ is bounded in $W^{1,p}(B(y_n,1))$.
 
 It follows that  $\theta_{y-y_n} v_n$ is bounded in $W^{1,p}(B(y,1))$ and thus converges in $L^p(B(y,1))$ after extraction. But  $\theta_{y-y_n} v_n= \theta_{y-y_n} u_0$ outside $B(y_n,1)$, thus we have  convergence of $\theta_{y-y_n} v_n$ to some $v$ in $\Lp$. Since $y_n-y \to 0$  we deduce that $v_n\to v$ in $\Lp$. From the lower semicontinuity of $f_0$ it is clear that $(v,y)\in K$, which is therefore compact. This proves property (M) since $K$ is independent of $f$. 
 \end{proof}
 
 It is well known since the paper  \cite{bd}  that the $\Gamma$-limit of an integral functional as defined above is an integral functional of the same type (with the same constants $c_0$ and $C_0$). Thus  $\F_0$ is closed under $\Gamma$-convergence and the above shows that the distance $d$ both metrizes $\Gamma$-convergence on $\F_0$ and makes it a  compact metric space. This is really a restatement of \cite{dmm}.
 
\subsection{Lower bound before convexity}
We now consider a random functional, i.e. a family  $\F = \{f^\om\}_\om$ of functionals in $\F_0$ parametrized by the random parameter $\om$, such that $\om\mapsto f^\om$ is measurable. We have 
\begin{equation}\label{fom} f^\om(u,y) = \begin{cases}\D \dashint_{B(y,1)} \fdm(\om, z, \nabla u(z))\,dz &\text{if $\nabla u\in L^p(B(y,1))$} \\ +\infty &\text{otherwise.}\end{cases}\end{equation}
We assume that the family is stationary i.e. that $\theta_y f^\om = f^{\om+y}$ or, equivalently in view of \eqref{trans}, that 
$$\fdm(\om, z+y ,p) = \fdm(\om+y,z,p).$$

Since we have no dependence on the slow parameter $x$, we need not check the uniform measurability with respect to $x$, and the results of the previous section apply. However, because   our general setting applies to non-convex functionals as well as convex ones, we cannot expect to derive   the result of  \cite{dalmasopaper} directly  from it (even excluding the upper bound part). What we get is rather  an intermediary result which, processed by using the convexity hypothesis, will yield the lower bound part of Theorem~\ref{dalmasoth}.

We let, as in Theorem~\ref{dalmasoth}, 
$$F_\ep^\om(v)= \dashint_G \fdm(\om, x/\ep, \nabla v(x)) dx,$$
and we  assume that $\{v_\ep\}$ is a sequence in $W^{1,p}(G)$ such that $F_\ep^\om(v_\ep, G)\le C$, with $C$ independent of $\ep$. Note that this bound is independent of $\om$ because of the growth  assumption on $\fdm$. We then let
\begin{equation}\label{rescale}
 u_\ep(y) = \ep^{-1} v_\ep(\ep y).
 \end{equation}
We may extend $v_\ep$ to  $G_\ep = \{x\mid d(x,G) <\ep\}$ in such a way that, as $\ep\to 0$, 
$$
F_\ep^\om(v_\ep) \ge  \dashint_{G} f^\om(u_\ep,\frac{x}{\ep}) \,dx - o(1), 
$$
where $f^\om$ is defined in \eqref{fom}.

We now deduce from Proposition~\ref{th1} that 

\begin{pro}\label{beforeconvex} Under the hypotheses of Theorem~\ref{dalmasoth}, and using the notation there, if  $\{v_\ep\}$ is a sequence in $W^{1,p}(G)$ such that $F_\ep^\om(v_\ep, G)\le C$ and if $\{\ep_n\}_n$ is a sequence tending to $0$ such that $\Pen\to \Po$ as $n\to +\infty$, then 
\begin{equation}\label{step1}
\liminf_{n\to+\infty} F_{\ep_n}^\om(v_{\ep_n}) \ge \int\(\limsup_{R\to+\infty} \dashint_{Q_R} f(u,y)\,dy\) \,d\Po(x,f,u),
\end{equation}
where $Q_R = (-R,R)^n$ and 
\begin{equation}\label{po}\Po = \lim_{n \to +\infty} \Pen,\quad \Pe = \dashint_G \delta_{\(x,\theta_\xe\f,\theta_\xe\ue\)},\end{equation}
with $\ue$ defined by \eqref{rescale}.
\end{pro}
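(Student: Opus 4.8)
The plan is to derive Proposition~\ref{beforeconvex} as a direct application of Proposition~\ref{th1} to the family of integrands $\{f^\om\}$ associated with the convex Lagrangian $\fdm$ via formula~\eqref{fom}. The first step is simply to collect the hypotheses already verified in the preceding subsection: the action $\theta$ on $X$ is continuous in $y$ and uniformly continuous in $u$ on compact sets; each $f^\om\in\F_0$ is lower semicontinuous and bounded below by the coercive lower semicontinuous functional $f_0(u,y)=c_0\dashint_{B(y,1)}|\nabla u|^p\,dz$; the family $\F_0$ has the (M) property; and, by stationarity of the Lagrangian, $\theta_y f^\om=f^{\om+y}$, which is precisely the invariance~\eqref{invariance} (with no $x$- or $\ep$-dependence). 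Since $\fdm$ does not depend on the slow variable $x$, uniform measurability with respect to $x$ is vacuous. Thus all hypotheses of Proposition~\ref{th1} are met.

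The second step is to reconcile the energy $F_\ep^\om(v_\ep)$ with the abstract energy $\E(\ue)$. Here I would invoke the rescaling $\ue(y)=\ep^{-1}v_\ep(\ep y)$ from~\eqref{rescale} together with the Fubini-type approximation sketched in Section~\ref{recast}: after extending $v_\ep$ to the slightly larger set $G_\ep$, one has $F_\ep^\om(v_\ep)\ge\dashint_G f^\om(\ue,x/\ep)\,dx-o(1)=\E(\ue)-o(1)$, where the $o(1)$ accounts for the boundary layer of width $\ep$ and for the difference between $\fdm(x/\ep,\nabla\ue(x/\ep))$ and its average over $B(x/\ep,1)$ — both estimates are routine from the $p$-growth of $\fdm$ and the uniform bound $F_\ep^\om(v_\ep)\le C$. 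In particular $\E(\ue)\le C+o(1)$, so the boundedness hypothesis of Proposition~\ref{th1} holds along the sequence $\{\ep_n\}$.

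The third step is to apply Proposition~\ref{th1} verbatim: along a subsequence with $\Pen\to\Po$, the first inequality of~\eqref{binf} gives
$$\liminf_{n\to+\infty}\En(\uen)\ge\int\(\limsup_{R\to+\infty}\dashint_{B_R}f(u,y)\,dy\)\,d\Po(x,f,u),$$
and combining this with $F_{\ep_n}^\om(v_{\ep_n})\ge\En(\uen)-o(1)$ yields~\eqref{step1}, except that the balls $B_R$ appear in place of the cubes $Q_R$. The final step is to replace $B_R$ by $Q_R=(-R,R)^n$, which is legitimate by the last sentence of Proposition~\ref{th1}: the family $\{Q_R\}_{R>0}$ is a Vitali family and satisfies the F\o lner-type condition~\eqref{hypsets}, since $|(\lambda+Q_R)\sd Q_R|/|Q_R|=O(|\lambda|/R)\to0$ for every fixed $\lambda\in\mr^n$.

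The only genuinely delicate point — the ``main obstacle'' in the sense that it is the one place where a short argument is needed rather than a pure citation — is the reduction $F_\ep^\om(v_\ep)\ge\E(\ue)-o(1)$ in step two, i.e. controlling the boundary-layer error from the extension of $v_\ep$ to $G_\ep$ and the local-averaging error; both rely on the $p$-growth bounds on $\fdm$ and on the fact that the $\ep$-neighbourhood of $\partial G$ has measure $o(1)$. Everything else is a verification that the abstract machinery of Section~\ref{recast} applies, which has essentially been done already in this section.
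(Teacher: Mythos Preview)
Your proposal is correct and follows exactly the approach the paper takes: the paper states the reduction $F_\ep^\om(v_\ep)\ge\dashint_G f^\om(u_\ep,x/\ep)\,dx-o(1)$ immediately before the proposition and then simply writes ``We now deduce from Proposition~\ref{th1} that\ldots'', leaving the details you have spelled out (verification of hypotheses, application of~\eqref{binf}, and the substitution of cubes for balls via the Vitali-family clause) implicit. Your identification of the boundary-layer/local-averaging step as the only place requiring a short argument matches the paper's treatment.
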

It remains to use the particular structure \eqref{lagdm} of our problem to see how this yields  the lower bound in the $\Gamma$-convergence stated in  Theorem~\ref{dalmasoth}.

\subsection{Processing the lower bound}
In this section we assume the hypotheses of Theorem~\ref{dalmasoth} and let, as in Proposition~\ref{beforeconvex}, 
$$\Po = \lim_{n \to +\infty} \Pen,\quad \Pe = \dashint_G \delta_{\(x,\theta_\xe\f,\theta_\xe\ue\)}.$$

We introduce the following notation: for any function $u:\mr^n\to\mr$ and any $R>0$ we let 
\begin{equation}\label{ur} u_R(x) = \frac1R u(R x), \quad u_{R,y}(x)= \frac1Ru(y+Rx).\end{equation}
We will use this notation either to let $R$ go to $+\infty$ (blow down), or $R\to 0$ (blow-up, in which case we will use lower case $r$ instead of $R$).

We recall the following $L^{p^*}$ differentiability property  of functions in  $W^{1,p}_\loc$ (see \cite{ziemer} or \cite{eg}):  
\begin{pro} Any $u\in W^{1,p}_\loc$ is such that for almost every $y\in\mr^n$ and as $r\to 0$, 
\begin{equation}\label{difflp} \psi(u,y,r):= \dashint_{B(0,1)}\left|u_{r,y}(x) - \nab u(y)\cdot x\right|^{p^*}\quad\longrightarrow\quad 0,\end{equation}
where $p^* = np/(n-p).$
\end{pro}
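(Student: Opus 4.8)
The plan is to deduce \eqref{difflp} from the Sobolev--Poincar\'e inequality together with the Lebesgue differentiation theorem; this is the classical $L^{p^*}$-differentiability property of Sobolev functions recalled in \cite{ziemer,eg}. Since the functions appearing in $\psi$ are elements of $X$, hence defined modulo constants, it suffices to prove that for a.e.\ $y\in\mr^n$, with $u(y)$ the precise (Lebesgue) value of $u$ and $v^y(x):=u(x)-u(y)-\nab u(y)\cdot(x-y)$, one has
\begin{equation*}
\dashint_{B(y,r)}\big|v^y(x)\big|^{p^*}\,dx = o(r^{p^*})\qquad(r\to0).
\end{equation*}
Indeed, the change of variables $x=y+rz$ rewrites the left-hand side as $r^{p^*}\dashint_{B(0,1)}\big|u_{r,y}(z)-u(y)/r-\nab u(y)\cdot z\big|^{p^*}\,dz$, which is $r^{p^*}$ times $\psi(u,y,r)$ read modulo the additive constant $u(y)/r$ (immaterial in $X$).

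First I would fix a point $y$ that is simultaneously an $L^p$-Lebesgue point of $\nab u$, i.e.\ $\eta(r):=\big(\dashint_{B(y,r)}|\nab u-\nab u(y)|^p\big)^{1/p}\to0$, and a Lebesgue point of $u$; almost every $y$ qualifies, by applying the Lebesgue differentiation theorem to $|\nab u-c|^p$ for $c$ in a countable dense subset of $\mr^n$ and to $u$. Applying the Sobolev--Poincar\'e inequality on $B(y,r)$ to $v^y$, whose gradient is $\nab u-\nab u(y)$, and using the scale invariance of that inequality, I would obtain, with $(v^y)_r$ the average of $v^y$ over $B(y,r)$,
\begin{equation*}
\left(\dashint_{B(y,r)}\big|v^y-(v^y)_r\big|^{p^*}\right)^{1/p^*}\le C(n,p)\,r\,\eta(r).
\end{equation*}
Thus the $L^{p^*}$-oscillation of $v^y$ on $B(y,r)$ is $o(r)$, and by the triangle inequality in $L^{p^*}$ it remains only to show that the mean value $(v^y)_r=\dashint_{B(y,r)}u-u(y)$ is $o(r)$ as well.

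This last point is the sole step needing an extra argument (and is the standard technical core of the statement), since Sobolev--Poincar\'e controls oscillation but not mean values. I would handle it by telescoping over dyadic radii $r_k=2^{-k}$: H\"older's inequality and the estimate above give $|(v^y)_{r_k}-(v^y)_{r_{k+1}}|\le C(n,p)\,r_k\,\eta(r_k)$, and since $\eta(r_k)\to0$ the tail sum $\sum_{l\ge k}r_l\,\eta(r_l)$ is $o(r_k)$; combined with $(v^y)_{r_k}\to v^y(y)=0$ (which holds because $y$ is a Lebesgue point of $u$), this forces $(v^y)_{r_k}=o(r_k)$, and comparing averages over nested balls transfers the estimate to all radii $r\to0$. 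Feeding this back into the oscillation bound yields the displayed decay of $\dashint_{B(y,r)}|v^y|^{p^*}$, hence, after rescaling, $\psi(u,y,r)\to0$ for a.e.\ $y$. The only genuine care is thus in the dyadic summation controlling the mean value; everything else is a direct application of scaling in Sobolev--Poincar\'e and of the Lebesgue differentiation theorem.
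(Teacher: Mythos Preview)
The paper does not give a proof of this proposition; it merely recalls it as the classical $L^{p^*}$-differentiability property of Sobolev functions and cites \cite{ziemer} and \cite{eg}. Your argument is precisely the standard proof that one finds in those references: Sobolev--Poincar\'e controls the oscillation of $v^y$ on balls by $r\,\eta(r)$, a dyadic telescoping bound combined with $\eta(r)\to 0$ and the Lebesgue-point property of $u$ forces the mean value to be $o(r)$, and the two together give the desired decay. The reasoning is correct, including the observation that $\psi$ is insensitive to the additive constant $u(y)/r$ because $X$ is $\Lp$ modulo constants; the only step requiring any care is the geometric-series estimate $\sum_{l\ge k} r_l\,\eta(r_l)=o(r_k)$, which you handle properly.
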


A consequence of this is the following
\begin{pro} \label{limblowdown} Assume $P$ is a translation invariant probability measure on the space $X$ of $\Lp$ functions modulo constants such that $P$-almost every $u$ is in $W^{1,p}_\loc$ and 
\begin{equation}\label{sobolev}\int\(\dashint_{B(0,1)} |\nabla u|^p\)\,dP(u) < +\infty.\end{equation}
Then  there exists $R_n\to +\infty$ such that  $P$-almost every $u$ is such that $d(u_{R_n})\to 0$ as $n\to +\infty$, where $d(u)$ denotes the distance in $\Lp$ of $u$ to the set of linear maps.
\end{pro}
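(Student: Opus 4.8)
\textbf{Step 1 (reduction to a deterministic limit).} First I note that $d$ is $1$-Lipschitz on $X$ (being an infimum of the $1$-Lipschitz maps $v\mapsto d_X(v,\ell)$), that $(u,R)\mapsto u_R$ is continuous, and that $0\le d(u_R)\le d_X(u_R,\ell)\le d_p\le 1$; hence $(u,R)\mapsto d(u_R)$ is bounded and Borel measurable. The plan is to prove that $\int_X d(u_R)\,dP(u)\to0$ as $R\to+\infty$. Granting this, choosing $R_n\to+\infty$ with $\int d(u_{R_n})\,dP\le2^{-n}$ gives $\sum_n d(u_{R_n})<+\infty$ $P$-a.s.\ by monotone convergence, hence $d(u_{R_n})\to0$ $P$-a.s., which is the assertion.

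\textbf{Step 2 (identification of the limit).} Let $\mathcal I$ be the $\sigma$-algebra of $\theta$-invariant Borel subsets of $(X,P)$. Using \eqref{difflp} (or merely Lebesgue differentiation) together with Fubini's theorem and the translation invariance of $P$ — slicing $X\times\mr^n$ and using that $0$ is a point of approximate differentiability for $P$-a.e.\ $u$ — the vector $\nabla u(0)$ is defined $P$-a.s.; and by Fubini and stationarity the hypothesis $\int\dashint_{B(0,1)}|\nabla u|^p\,dP<+\infty$ says exactly that $\nabla u(0)\in L^p(X,P;\mr^n)$. Put $\bar N(u):=\mathbb{E}_P[\nabla u(0)\mid\mathcal I]$, a $\theta$-invariant $\mr^n$-valued map. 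I claim that, $P$-a.s., $u_R\to\bar N(u)\cdot x$ in $\Lp$ modulo constants as $R\to+\infty$. Since $\bar N(u)\cdot x$ is linear and invariant under the scaling $v\mapsto v_R$, this yields $d(u_R)\le d_X\big(u_R,\bar N(u)\cdot x\big)\to0$ $P$-a.s., and Step~1 then follows by dominated convergence.

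\textbf{Step 3 (proof of the claim).} I would subtract the linear part: set $w:=u-\bar N(u)\cdot x$, so that $\nabla w=\nabla u-\bar N(u)$ is a stationary potential field in $L^p$ with $\mathbb{E}_P[\nabla w(0)\mid\mathcal I]=0$, and $u\mapsto w$ is $\theta$-equivariant on $X$; it suffices to show $w_R\to0$ in $\Lp$ modulo constants, $P$-a.s. Fix $m\in\mn$ and normalise $\dashint_{B_m}w_R=0$. By Wiener's multiparameter ergodic theorem (see \cite{becker,krengel}, valid for the averaging families satisfying \eqref{hypsets}), $\dashint_{B_m}|\nabla w_R|^p=\dashint_{B_{Rm}}|\nabla w|^p\to\mathbb{E}_P[|\nabla w(0)|^p\mid\mathcal I](u)<+\infty$ $P$-a.s., so $\nabla w_R$ is bounded in $L^p(B_m)$ and, by Poincar\'e, $w_R$ is bounded in $W^{1,p}(B_m)$; extract $w_R\weak w_\infty$ in $W^{1,p}(B_m)$ and $w_R\to w_\infty$ in $L^p(B_m)$. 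For every rational cube $Q\subset B_m$ one has $\dashint_Q\nabla w_R=\dashint_{RQ}\nabla w\to\mathbb{E}_P[\nabla w(0)\mid\mathcal I]=0$ $P$-a.s.\ (Wiener again, $\{RQ\}_R$ admissible by \eqref{hypsets}), while $\dashint_Q\nabla w_R\to\dashint_Q\nabla w_\infty$ by weak $L^p$-convergence; hence $\nabla w_\infty\equiv0$, so $w_\infty\equiv0$. As the limit is independent of the subsequence, $w_R\to0$ in $L^p(B_m)$ for every $m$; the normalising constants for different $m$ differ by $o(1)$, so a single normalisation gives $w_R\to0$ in $\Lp$, i.e.\ in $X$.

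\textbf{Expected main obstacle.} The crux is the claim of Steps~2--3, and the point I expect to be delicate is that it is \emph{not} obtainable by a naive Poincar\'e--Wirtinger estimate bounding $u_R$ minus its best affine approximant by the $L^p$-oscillation of $\nabla u_R$: that oscillation does not vanish ($\nabla w_R$ tends to $0$ only weakly, not strongly), and the Poincar\'e constant over $B_{Rm}$ degenerates like $R$. The needed cancellations occur only at the level of the potential and are precisely what the ergodic theorem, tested against cubes, supplies. The remaining issues are routine: the $P$-a.s.\ definedness and $\theta$-invariance of $\bar N$ and the $\theta$-equivariance of $u\mapsto w$; the admissibility of the dilated (possibly off-centre) cubes $\{RQ\}_R$ as averaging families in the multiparameter ergodic theorem (condition \eqref{hypsets}, already used for Proposition~\ref{th1}); the diagonal extraction over $m$ and the reconciliation of the mod-constants normalisations; and the extraction of a deterministic sequence $R_n$ from $\int d(u_R)\,dP\to0$.
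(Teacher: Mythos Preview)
Your argument is correct and takes a genuinely different route from the paper's.

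The paper pushes $P$ forward under $u\mapsto u_R$, shows tightness of the resulting $P_R$, passes to a weak limit $P_\infty$, verifies that $P_\infty$ is again translation invariant, and then applies the $L^{p^*}$-differentiability statement \eqref{difflp} at the level of $P_\infty$: for $P_\infty$-a.e.\ $u$ and a.e.\ $y$ the blow-up $u_{r,y}$ is close to a linear map, translation invariance transfers this to $y=0$ in mean, yielding $\int d(u_r)\,dP_\infty\to0$ as $r\to0$; a diagonal extraction over $r$ and $R_n$ then produces the subsequence. No ergodic theorem enters and the limiting slope is never identified. You instead name the limit as $\bar N(u)\cdot x$ with $\bar N(u)=\mathbb E_P[\nabla u(0)\mid\mathcal I]$, subtract it, and annihilate the remainder via Poincar\'e, Rellich, and the ergodic theorem tested against cubes. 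This buys two things: you actually prove the stronger conclusion that $d(u_R)\to0$ $P$-a.s.\ along the \emph{full} family $R\to+\infty$ (so Step~1 and its subsequence extraction are superfluous once Step~3 is done), and your $\bar N(u)$ is precisely the $q(u)$ of \eqref{avgrad} that the paper reintroduces immediately after the proposition---so your proof absorbs the next step of the paper's argument as well. Your diagnosis of the main obstacle is on target: the cancellation lives in the potential, not the gradient, and is delivered by the ergodic theorem.

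One point to tighten: for an off-centre cube $Q$ the family $\{RQ\}_R$ need not satisfy the Vitali condition~(i) quoted before Proposition~\ref{th1} (the intersection of closures need not be $\{0\}$), so citing \eqref{hypsets} alone is not quite enough. Since $\nabla w(0)\in L^p(P)$ with $p>1$, the fix is routine: write $\indic_{RQ}$ by inclusion--exclusion as a signed sum of indicators of rectangles with a corner at the origin and fixed aspect ratio, for each of which the multiparameter ergodic theorem is classical; the linear combination then gives $\dashint_{RQ}\nabla w\to\mathbb E_P[\nabla w(0)\mid\mathcal I]=0$ $P$-a.s., as you need.
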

\begin{proof}Let $\{R_n\}$ be any sequence tending to $+\infty$, and let $P_{R_n}$ be the push-forward of $P$ by the map $u \mapsto u_{R_n}$. 
We may check that  $\{P_{R_n}\}_n$ is tight, the proof mimics that of  Proposition~\ref{tight} and uses \eqref{sobolev},   we omit it. The tightness implies that any subsequence has a convergent subsequence. Denote by $P_\infty$ the weak limit of a sequence $\{P_{R_n}\}_n$, such that $R_n\to +\infty$. 

First,  $P_\infty$ is translation-invariant, indeed for any bounded continuous function $\vp$ on $X$ and any $x\in\mr^n$  we have 
\begin{multline*}\int \vp( \theta_x u)\,dP_\infty( u) = 
\lim_R \int \vp( \theta_x u)\,dP_R(u) = \\
\lim_R \int \vp( \theta_x u_R)\,dP(u)=
\lim_R \int \vp\((\theta_{Rx}u)_{R}\)\,dP(u) .
\end{multline*}
using the fact that $P$ is translation-invariant, this is equal to 
$$\lim_R \int \vp( u_R)\,dP(u) = \int \vp(u)\,dP_\infty(u),$$
thus $P_\infty$ is translation-invariant.

Second, because of the $L^{p^*}$ differentiability property of functions in $W^{1,p}$, we have for $P_\infty$-a.e. $u$ that  for a.e. $y\in\mr^n$ that $\lim_{r \to 0} \psi(u,y, r) =0$, where $\psi$ is defined in \eqref{difflp}. Then, by Lebesgue's dominated convergence theorem, 
$$\lim_{r\to 0} \iint \min(\psi(u,y,r), 1) dP_\infty(u)\,dy  =\lim_{r\to 0} \iint \min(\psi(u,y,r), 1) dy\, dP_\infty(u) =0.$$
Using the translation-invariance of $P_\infty$, the inner integral on the left-hand side is independent of $y$ hence we deduce  
\begin{equation}\label{limpsi}\lim_{r\to 0} \int \min(\psi(u,0,r), 1) dP_\infty(u) = 0.\end{equation}

Now  for any integer $k$ and using Hölder's inequality we have 
$$\|u_r - \nab u(0)\|_{L^p(B(0,k)} \le |B_1|^{\frac1p} k^{\frac np - 1}\psi(u,0,kr)^{\frac1{p^*}}.$$
Therefore, in view of \eqref{dp}, 
$$\int d_p(u_r,\nab u(0))\,dP_\infty(u)  \le C\sum_k \(2^{-k}k^{\frac np - 1}\int \min\(\psi(u,0,kr)^{\frac1{p^*}},1\)\,dP_\infty(u)\).$$
Thus, in view of \eqref{limpsi}, we find that 
$$\lim_{r\to 0} \int d(u_r)\, dP_\infty(u) =0,$$
where $d(u)$ denotes the distance in $\Lp$ of $u$ to linear maps.

Finally, from the definition of $P_\infty$, it follows that 
$$\lim_{r\to 0} \lim_{n\to +\infty} \int d(u_{r R_n}) \, dP(u) =\lim_{r\to 0} \lim_{n\to +\infty} \int d(u_r) \, dP_{R_n}(u) =0,$$
and  by choosing $r_k = 1/k$ and for each  integer $k$ by choosing $n_k$ large enough, we deduce the existence of a sequence $\{R'_k\} = \{r_k R_{n_k}\}$ which tends to $+\infty$ and such that 
$$\lim_{k\to +\infty} \int d(u_{R'_k}) \, dP(u) =0.$$
Going to a subsequence, we find that for $P$-a.e. $u$ we have $\lim_{k\to +\infty} d(u_{R'_k}) = 0$. 
\end{proof}

Applying Proposition~\ref{limblowdown}  to $d\Po(x,f,u)$, or rather to its marginal $dQ^\om(u)$ with respect to $u$, we deduce that there exists a sequence $\{R\}$ tending to $+\infty$ such that for $\Po$-almost every $u$, the blow-down maps $u_R$ get $\Lp$ closer and closer to a linear map as $R\to+\infty$.

The rest of this section is devoted to the proof of Theorem~\ref{dalmasoth}.

\subsubsection*{Step 1: asymptotic linearity} We start by showing that $\Po$-almost surely, the profiles $u$ are asymptotically linear in a suitable sense. 

Since
$$\int f(u,y)\,d\Po(x,f,u)<+\infty,$$
and since 
$$f(u,y)\ge c_0\dashint_{B(y,1)}|\nab u(z)|^p\,dz,$$
we deduce that 
\begin{equation}\label{borne}\int\(\dashint_{B(0,1)} |\nab u(z)|^p\,dz\)\,d\Po(x,f,u) < +\infty.\end{equation}
It then follows from Proposition~\ref{limblowdown}  that there then exists a sequence $\{R\}$ tending to $+\infty$ such that the distance in $\Lp$ of $u_R$ to the set of linear maps tends to $0$ as $R\to +\infty$ along the sequence. Thus for $P$-almost every $u$ and every $R$ in the sequence there exists $q_{u,R}\in\mr^n$ such that $\|u_R - q_{u,R}\cdot x\|_{L^p(Q_1)}$ tends to $0$ as $R\to +\infty$, where we identified the vector $q_{u,R}$ with the linear map $x\mapsto q_{u,R}\cdot x$.

On the other hand the ergodic theorem implies that for $\Po$-almost every $u$, 
\begin{equation}\label{avgrad}\text{$\displaystyle \lim_{R\to+\infty} \dashint_{Q_R} \nab u(y) \,dy$ exists and is finite.}\end{equation}
If we denote $q(u)$ this limit, then we must have $q(u) = \lim_{R\to +\infty} q_{u,R}$ along the sequence $\{R\}$ above.

It follows that   there  exists a sequence $\{R\}$ such that for almost every $u$, 
\begin{equation}\label{asymplin}\text{$\|u_R - q(u)\|_{L^p(Q_1)}$ tends to $0$ as $R\to +\infty$},\quad q(u) = \lim_{R\to +\infty}\dashint_{Q_R} \nab u.\end{equation}
Note that the ergodic theorem also implies that for a.e. $u$ we have 
\begin{equation}\label{limerg}\lim_{R\to +\infty}\dashint_{Q_R} |\nab u|^p,\quad\text{exists}.\end{equation}

\subsubsection*{Step 2: Relaxation of \eqref{step1}} Let us define for any $q\in\mr^n$ and any Lagrangian $\gdm(y,q)$ which is a positive function, convex in $q$ and measurable in $y$ such that $ c_0 |q|^p \le \gdm(y,q) \le  C_0 (1+|q|)^p$, the quantity
\begin{equation}\label{mruf}m_R(q,\gdm) := \min_{\substack{v:Q_R\to\mr\\ \text{$v(y) = q\cdot y$ on $\partial Q_R$}}} \dashint_{Q_R} \gdm(y, \nabla v(y)) dy.\end{equation}
We know that $\Po$-almost every functional $g$ is of the form \eqref{fo} for some Lagrangian $\gdm$. We will now abuse notation by identifying $g$ with $\gdm$. With this identification, \eqref{step1}  may be rewritten
$$\liminf_{n\to +\infty} F_{\ep_n}^\om(v_{\ep_n}) \ge \int\(\limsup_{R\to+\infty} \dashint_{Q_R} \dashint _{B(0,1)} \gdm(y,\nab u(y))\,dy\) \,d\Po(x,\gdm,u),$$
which easily yields (we omit details)
\begin{equation}\label{step2}
\liminf_{n\to +\infty} F_{\ep_n}^\om(v_{\ep_n}) \ge \int\(\limsup_{R\to+\infty} \dashint_{Q_R} \gdm(y,\nab u(y))\,dy\) \,d\Po(x,\gdm,u).
\end{equation}
The goal of this step is to prove that  this inequality implies
\begin{equation}\label{step3}
\liminf_{n\to +\infty} F_{\ep_n}^\om(v_{\ep_n})\ge \int\(\limsup_{R\to+\infty} m_R(q(u), \gdm)\) \,d\Po(x,\gdm,u), 
\end{equation}
where $q(u)$ is defined in \eqref{asymplin}.

To prove this, let $u$ be such that  \eqref{asymplin}, \eqref{limerg} hold for some sequence $\{R\}$ tending to $+\infty$, which is true  for a.e. $u$. The limits $R\to +\infty$ below will always be taken along this sequence. Choose  $\delta\in (0,1)$ and let $R>1$. Let us define a function $v_\delta$ on $Q_R$ such that $v_\delta(y) = q(u) \cdot y$ on $\partial Q_R$ and which is sufficiently close to $u$. Then $v_\delta$ can be used as a test function in the definition of $m_R(q(u), \gdm)$ to bound  $m_R(q(u), \gdm)$ from above in terms of the integral over $Q_R$ of $\gdm(y,\nab u(y))$. The function $v_\delta$ is defined to be such that
\begin{equation}\label{vdelta} v_\delta(y) = q(u)\cdot y + \chi_\delta(\frac yR) (u(y) - q(u)\cdot y),\end{equation}
where $\chi_\delta$ is a cutoff function independent of $R$ defined on $Q_1$, takes values between $0$ and $1$, is equal to $0$ on the boundary, and equal to $1$ on $Q_{1-\delta}$. We also choose $\chi_\delta$ so that $|\nab v_\delta|\le 2/\delta$. 

We then compute 
$$\int_{Q_R} \gdm(y,\nab v_\delta(y))\,dy \le \int_{Q_{R(1-\delta)}} \gdm(y,\nab u(y))\,dy + \int_{Q_R\sm Q_{R(1-\delta)}} \gdm(y,\nab v_\delta(y))\,dy.$$
From the growth condition on $\gdm$ and computing $\nab v_\delta$ we deduce using standard arguments that 
\begin{equation}\label{euv}\int_{Q_R} \gdm(y,\nab v_\delta(y))\,dy \le \int_{Q_R} \gdm(y,\nab u(y))\,dy +\text{remainder},\end{equation}
where
\begin{equation}\label{remainder}\text{remainder} \le C \(\delta |Q_R|+ \delta {q(u)}^p |Q_R|+ R^{-p}\|u-q(u)\|_{L^p(Q_R)}^p + \int_{Q_R\sm Q_{R(1-\delta)}} |\nab u|^p\).\end{equation}
From the ergodic theorem, the average of $|\nab u|^p$ on $Q_R$ has a limit as $R\to +\infty$ for  it follows that as $R\to +\infty$
$$\int_{Q_R\sm Q_{R(1-\delta)}} |\nab u|^p \quad \sim\quad \delta \int_{Q_R} |\nab u|^p.$$
Moreover, using \eqref{asymplin}, as $R\to +\infty$
$$R^{-p}\|u(y)-q(u)\cdot y\|_{L^p(Q_R)}^p = R^{n}\|u_R(z)-q(u)\cdot z\|_{L^p(Q_1)}^p = o(R^n).$$
Plugging this information into \eqref{remainder}, \eqref{euv}, dividing by $|Q_R|$ and letting $R\to +\infty$ we find 
\begin{equation}\label{step4}\liminf_{R\to +\infty}m_R(q(u),\gdm) \le \limsup_{R\to +\infty} \dashint_{Q_R} \gdm(y,\nab u(y))\,dy + C\delta \(1+\lim_{R\to+\infty} \dashint_{Q_R} |\nab u(y)|^p\,dy\),\end{equation}
where we have taken into account the fact that $v_\delta$ is a legitimate test function in the definition of $m_R(q(u),\gdm)$ and where the limits no longer need to be along the sequence $\{R\}$ because of the  $\limsup$ and $\liminf$.

It remains to integrate \eqref{step4} with respect to $\Po$ and to let $\delta\to 0$ to find, in view of \eqref{borne} that 
\begin{equation}\label{step5}\liminf_{n\to +\infty} F_{\ep_n}^\om(v_{\ep_n}) \ge \int\(\liminf_{R\to+\infty} m_R(q(u), \gdm)\) \,d\Po(x,\gdm,u).\end{equation}

\subsubsection*{Step 3: Separation of variables}  Now we show why in \eqref{step5}, integration with respect to the variables $(x,u)$ and integration with respect to $\gdm$ separate. Because of the invariance of $\Po$ as stated in Proposition~\ref{inv1}, using the ergodic theorem we may replace the integrand 
$I(q(u),\gdm):=\liminf_{R\to+\infty} m_R(q(u), \gdm)$ in \eqref{step5} by 
$$\tilde I(q(u),\gdm) := \lim_{R\to +\infty} \dashint_{B_R} I(q(\theta_y u), \theta_y \gdm)\,dy.$$
But it is clear that $q(\theta_y u) = q(u)$ for any $y$, therefore, 
$$\tilde I(q(u),\gdm) = \lim_{R\to +\infty} \dashint_{B_R} I(q(u), \theta_y \gdm)\,dy,$$
and it follows from the ergodicity of the action that for $P^\omega$-almost every $(u,\gdm)$ it holds that 
$$\tilde I(q(u),\gdm) = \int I(q(u), \overline \gdm)\,d\Qo(\overline \gdm),$$
where $\Qo$ is the marginal of $\Po$ with respect to the variable $\gdm$.  Thus $\tilde I(q(u), \gdm)$ is independent of $\gdm$. Inserting this relation into
 \eqref{step5}, we deduce that, denoting $\Po(x,u)$ the marginal of $\Po$ with respect to the variables $(x,u)$, 
\begin{equation}\label{step6}\liminf_{n\to +\infty} F_{\ep_n}^\om(v_{\ep_n}) \ge \int\(\int\(\liminf_{R\to+\infty} m_R(q(u), \gdm)\)\,d\Qo(\gdm)\) \,d\Po(x,u).\end{equation}
Applying the sub-additive ergodic theorem as in \cite{dmm}, $\Qo$-almost every $\gdm$ is such that $m_R(q(u), \gdm)$ has a limit as $R\to +\infty$, therefore  we may replace the $\liminf$ in \eqref{step6} by a $\lim$, yielding
\begin{equation}\label{step7}\liminf_{n\to +\infty} F_{\ep_n}^\om(v_{\ep_n})\ge \int\(\int\(\lim_{R\to+\infty} m_R(q(u), \gdm)\)\,d\Qo(\gdm)\) \,d\Po(x,u).\end{equation}

\subsubsection*{Step 4: Convexity} It is now time to make use of the convexity assumption. First we use Fubini's Theorem to write
\begin{multline}\label{fubcon}\iint\(\lim_{R\to+\infty} m_R(q(u), \gdm)\)\,d\Qo(\gdm) \,d\Po(x,u) \\
= \iint\(\lim_{R\to+\infty} m_R(q(u), \gdm)\)\,d\Po(x,u)\,d\Qo(\gdm).\end{multline}
Denote by $\{\Pox\}_x$ the disintegration of $\Po$ with respect to the variable $x$ (see \cite{jiri}, \cite{schwartz} or \cite{ags}) so that $d\Po(x,u) = d\Pox(u)\,d\lambda(x)$ where $\lambda$ is the normalized Lebesgue measure on $G$. Then,   because $\gdm$ is convex, the map $q\to \lim_{R\to+\infty} m_R(q(u), \gdm)$ is convex as well hence we have 
\begin{equation}\label{qx}\int\(\lim_{R\to+\infty} m_R(q(u), \gdm)\)\,d\Pox(u)\ge \lim_{R\to+\infty} m_R(q_x, \gdm),\quad\text{where}\quad q_x = \int q(u)\,d\Pox(u).\end{equation}
Replacing in \eqref{fubcon}, \eqref{step7} we find after applying Fubini's Theorem
$$\liminf_{n\to +\infty} F_{\ep_n}^\om(v_{\ep_n})\ge \dashint_G \(\int\lim_{R\to+\infty} m_R(q_x, \gdm)\,d\Qo(\gdm)\)\,dx,$$
which proves \eqref{lbdm} and Theorem~\ref{dalmasoth} provided we show that for a.e. $x$ we have $q_x = \nab v(x)$.

\subsubsection*{Step 5: Conclusion} To prove that $q_x = \nab v(x)$  a.e. we go back to the definition of $q_x$ in \eqref{qx} and note that since $\Pox$ is translation-invariant and in view of the definition of $q(u)$ in \eqref{asymplin}, the ergodic theorem implies that 
$$ q_x = \int \dashint_{Q_1}\nab u(y)\,dy\,d\Pox(u).$$
Then, for any smooth vector field$\vp(x)$ we have 
$$\dashint_G \vp(x)\cdot q_x\,dx = \dashint_G\int\dashint_{Q_1} \vp(x)\cdot\nab u(y)\,dy\,d\Pox(u)\,dx = \int\dashint_{Q_1} \vp(x)\cdot\nab  u(y)\,dy\,d\Po(x,u).$$
Recalling \eqref{po}, we deduce that
$$\dashint_G \vp(x)\cdot q_x\,dx = \lim_{\ep\to 0}\dashint_G\dashint_{Q_\ep(x)} \vp(x)\cdot\nab v_\ep(y)\,dy\,dx.$$
Indeed,  $P_{\ep_n}$ converges weakly to $P$ and both are supported on a bounded subset of  $W^{1,p}_\loc$ (up to a set of arbitrarily small measure). On the other hand, the restriction  of  $f:u\to\dashint_{Q_1}\nab u$ to such a bounded set is bounded and continuous on $\Lp$, since a sequence $u_n$ of that set that converges in $\Lp$ also converges weakly in $W^{1,p}_\loc$.  We then deduce that $\int f \, dP_{\ep_n} $ converges to $\int f\, dP$ hence the result. 

Passing to the limit in the right-hand side we find 
$$\dashint_G \vp(x)\cdot q_x\,dx = \dashint_G\vp(x)\cdot\nab v(x)\,dx,$$
where $v$ is the weak limit of $\{v_\ep\}_\ep$. Since this is true for any smooth vector field $\vp$ we have $q_x = \nabla v(x)$ a.e. in $G$.

This concludes the proof of Theorem~\ref{dalmasoth}.

\begin{remark} Note that the measure $\Po$ contains the information on the profile of $u$ and all its derivatives, so in the framework of convex integral functionals considered in this section, one could probably deduce from $\Po$ a type of  gradient Young measure at the $\ep $ scale, as defined and characterized in \cite{kp}. Using their characterization  may provide another way of recovering the lower bound of \cite{dalmasopaper}. This is however complicated by the fact that \cite{dalmasopaper} impose  an affine Dirichlet boundary condition rather than  one on the average of the gradient on large cubes. 

\end{remark}

\section{Application to the two-scale problem of Alberti-Müller}\label{nonconvex}
In this section, we are interested in the functional 
\begin{equation}\label{eam} \eam(v) = \int_0^1 \ep^4 {v''(x)}^2+\frac1{\ep^2} W(v'(x)) + \frac1{\ep^2} m(x) a\(\om,\xe\) v^2(x)\,dx,\end{equation} defined over $H^2([0,1], \mr)$.

This corresponds to  a generalization of the functional $\ep^{-2/3}I^\ep(v)$ defined in \cite{am}, but our $\ep$ corresponds to their $\ep^{1/3}$, and their $a(x)$ is replaced by our $m(x) a(\om, \xe)$, i.e. a randomly oscillating weight at the scale $\ep$ (that is at the scale $\ep^{1/3}$ in the notation of \cite{am}). Here $\om$ is as usual a random parameter belonging to a probability space $\Omega$.

We wish to identify to main order the infimum of $\eam$ on $H^2$ when $\ep\to 0$. We will find out that under suitable assumptions it is a deterministic quantity that can be expressed in terms of a family of sharp-interface problems on the whole real line.

\subsection{$\eam$ in terms of local averages}

We now recast the minimization of $\eam$ in our framework and give precise assumptions.

We let $X = \lp$ and, for any $u\in X$, any $y\in\mr$, any $\ep>0$ and any $\om$ we let $I_y = (y-1/2,y+1/2)$  and 
\begin{equation}\label{fam}\f(u,y) = \begin{cases} +\infty & \text{if $u\notin H^2(I_y)$}\\\displaystyle
\int_{I_y} \ep^2 {u''(t)}^2 +\frac1{\ep^2} W(u'(t)) + m(x) a(\om,t) u^2(t)\,dt & \text{if $u\in H^2(I_y)$.}\end{cases}\end{equation}
Here we have assumed that 
\begin{itemize}
\item[i)] $W(x) = (1-x^2)^2$, although other choices are possible.
\item[ii)]  $x\mapsto m(x)$ is measurable, and $\alpha\le m\le \beta$ for some positive constants $\alpha$ and $\beta$.
\item[iii)] $a$ is measurable and  $a(\om+y,z) = a(\om, y+z)$, for some measure preserving action $(\om, y)\to\om+y$ of $\mr$ on $\Om$. Moreover $1\le a \le 2$
\end{itemize}

\begin{pro} Assume i), ii), and iii) above. Then, as $\ep\to 0$, 
\begin{equation}\label{local}\min_{H^2([0,1])}\eam = \min_{u\in H^2_\loc(\mr)} \E(u) + o(1),\end{equation}
where $\E$ is defined as in \eqref{energy} by 
\begin{equation}\label{enam} \E(u) = \int_0^1 \f\(u,\xe\),dx.\end{equation}
\end{pro}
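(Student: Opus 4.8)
The statement is an asymptotic identification of $\inf\eam$ over $H^2([0,1])$ with $\inf\E$ over $H^2_\loc(\mr)$, the bridge being the rescaling $v(x)=\ep\,u(x/\ep)$, equivalently $u(y)=\ep^{-1}v(\ep y)$. I would prove the two inequalities separately, in each direction transporting a near-minimizer of one functional to an admissible competitor for the other and controlling the resulting error. As a preliminary, note that $\limsup_{\ep\to0}\inf_{H^2([0,1])}\eam<\infty$ by testing with an $\ep$-periodic sawtooth (slopes $\pm1$ away from $O(\ep^3)$ transition layers, amplitude $O(\ep)$), and likewise $\limsup_{\ep\to0}\inf_{H^2_\loc}\E<\infty$ via the corresponding $O(1)$-periodic profile. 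Any $v$ with $\eam(v)\le C$ then satisfies $\|\ep^2v''\|_{L^2}^2+\ep^{-2}\|W(v')\|_{L^1}+\ep^{-2}\|v\|_{L^2}^2\le C$ (using $\alpha\le m$, $1\le a$), hence $\|v\|_{L^2}\le C\ep$ and $\|v'\|_{L^4([0,1])}\le C$ (from $(v')^4\le 2+2W(v')$). Finally, a one-dimensional insertion argument gives, for any $v$ with $\eam(v)\le\inf\eam+\ep$, the sharp bound $\|v\|_{L^\infty([0,1])}\le C\ep$: if $|v|$ reached $M$, then $|v|\ge M/2$ on an interval of length $\gtrsim M$ (since $\int|v'|\lesssim|J|$ on intervals of length $\ge\ep^2$), so the $v^2$-term alone costs $\gtrsim M^3/\ep^2$ there, while grafting the optimal periodic profile on that interval costs $O(M)+O(\ep)$, forcing $M^3/\ep^2\lesssim M+\ep$, i.e. $M\le C\ep$. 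The analogous bounds hold for competitors $u$ of $\E$ on the only portion $I:=(-1/2,1/\ep+1/2)$ of $\mr$ that $\E$ actually sees.

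Next I set up the dictionary and a boundary-collar surgery. Under $v(x)=\ep\,u(x/\ep)$ the pointwise Lagrangians match exactly: $\ep^4(v'')^2+\ep^{-2}W(v')+\ep^{-2}m(x)a(\om,x/\ep)v^2$ at $x$ equals $\ep^2(u'')^2+\ep^{-2}W(u')+m(x)a(\om,y)u^2$ at $y=x/\ep$. To remove endpoint effects I first replace the near-minimizer by one that is well behaved near $\partial[0,1]$: choosing (by pigeonhole over $\sim1/\ep$ disjoint $\ep$-windows) a cut-point at scale $O(\ep)$ where the interface energy and $|v|$ are $O(\ep)$, I graft the optimal periodic profile on a mesoscopic collar $[0,\delta_\ep]\cup[1-\delta_\ep,1]$ with $\ep\ll\delta_\ep\ll1$; this changes $\eam$ by $O(\delta_\ep)=o(1)$ and makes the energy density $O(1)$ and $|v|=O(\ep)$ on the collar. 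Then $u(y):=\ep^{-1}v(\ep y)$, defined on $[0,1/\ep]$, is extended across each half-unit end-window of $I$ by an optimal transition (cost $O(\ep)$) followed by a linear piece, and arbitrarily to $H^2_\loc(\mr)$ outside $I$, which does not affect $\E$. The reverse direction, $u\mapsto v(x)=\ep\,u(x/\ep)$ on $[0,1]$ for a near-minimizer $u$ of $\E$ (using the analogous surgery for $\E$ near $y=0$ and $y=1/\ep$), is symmetric.

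The comparison itself is the Fubini computation of \eqref{4}--\eqref{6}. Writing $\E(u)=\int_0^1\bigl(\int_{I_{x/\ep}}[\ep^2(u'')^2+\ep^{-2}W(u')]\,dt\bigr)dx+\int_0^1 m(x)\bigl(\int_{I_{x/\ep}}a(\om,t)u^2(t)\,dt\bigr)dx$ and swapping integrations, the first integral equals $\ep\int_\mr[\ep^2(u'')^2+\ep^{-2}W(u')](t)\,w(t)\,dt$ with $w(t)=|[0,1/\ep]\cap(t-\tfrac12,t+\tfrac12)|\in[0,1]$, $w\equiv1$ on $[\tfrac12,\tfrac1\ep-\tfrac12]$; reverting the change of variables its bulk part is exactly the interface part of $\eam(v)$, and the remainder, supported on the two end-windows, is $O(\ep\cdot1)=o(1)$ by the collar surgery. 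For the $m$-term the same manipulation reduces the discrepancy with $\ep^{-2}\int_0^1 m\,a(\om,\cdot/\ep)\,v^2$ to an end-window term $O\bigl(\ep\int_{\rm ends}u^2\bigr)=O(\ep^{-1}\|v\|_{L^\infty}^2)=o(1)$ plus a bulk term bounded by $C\ep^{-2}\|v\|_{L^\infty}^2\,\|\bar m_\ep-m\|_{L^1(0,1)}$, where $\bar m_\ep=m*\tfrac1\ep\indic_{(-\ep/2,\ep/2)}\to m$ in $L^1$; since $\|v\|_{L^\infty}\le C\ep$ this is $O(\|\bar m_\ep-m\|_{L^1})=o(1)$. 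Hence $\E(u)=\eam(v)+o(1)$ uniformly over the class used, which gives both inequalities and therefore \eqref{local}.

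The main obstacle is precisely this last error analysis: the problem carries a mesoscale ($v$ oscillating at scale $\ep$ with amplitude $\ep$) that interacts with the $\ep$-scale average of the merely measurable weight $m$, so one must first establish the sharp $L^\infty$-decay $\|v\|_\infty\le C\ep$ of near-minimizers (the insertion argument in the first paragraph) and then carefully excise the endpoint layers via the collar construction; once those two points are in place, the remaining identifications are the routine Fubini bookkeeping of \eqref{4}--\eqref{6} applied to the explicit form \eqref{fam} of $\f$.
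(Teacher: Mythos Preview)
Your approach is substantially more careful than the paper's. The paper's proof is three lines: expand $\E(u)$ via \eqref{fam}, change variables to $v(x)=\ep u(x/\ep)$, apply Fubini to obtain $\E(u)=\int_\mr\chi_\ep(y)[\cdots]\,dy$ with $\chi_\ep=\ep^{-1}\indic_{[-\ep/2,\ep/2]}*\indic_{[0,1]}$, and conclude from the sandwich $\indic_{[\ep/2,1-\ep/2]}\le\chi_\ep\le\indic_{[-\ep/2,1+\ep/2]}$. In fact the paper's displayed post-Fubini formula still carries the symbol $m(x)$ after $x$ has been integrated out; the correct weight on the $v^2$-term is the mollification $\bar m_\ep$, and the sandwich on $\chi_\ep$ alone does \emph{not} control the bulk discrepancy $\ep^{-2}\int(\bar m_\ep-m)\,a\,v^2$ when $m$ is merely measurable. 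You are right that this is the genuine obstacle, and your route through an $L^\infty$ bound on near-minimizers is a natural way to close it; your collar surgery and Fubini bookkeeping are fine.

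That said, your one-line insertion argument for $\|v\|_{L^\infty}\le C\ep$ has a gap. You graft the optimal periodic profile on the interval $J$ (length $\gtrsim M$) where $|v|\ge M/2$ and claim the graft costs $O(M)+O(\ep)$. But at $\partial J$ one has $|v|\sim M/2$ while the graft has amplitude $O(\ep)$; bridging this mismatch by a slope-$\pm1$ ramp of length $\sim M$ already contributes $\gtrsim\ep^{-2}\int_0^{cM}t^2\,dt\sim M^3/\ep^2$ to the $v^2$-term---the same order as the claimed saving---so no contradiction follows. (The bound you can read off directly from $\|v\|_{L^2}\le C\ep$ and the plateau is only $M\le C\ep^{2/3}$, which leaves $\ep^{-2}\|v\|_\infty^2\,\|\bar m_\ep-m\|_{L^1}$ unbounded.) One repair is to graft instead on the connected component $[a,b]$ of $\{v\ne0\}$ containing the maximum, so that $v(a)=v(b)=0$ and no ramp is needed; the sawtooth then costs $O(b-a)$, and a separate comparison shows $b-a=O(M)$ for a near-minimizer (a long interval with $v$ of fixed sign forces excess energy relative to the centered sawtooth), after which your inequality $M^3/\ep^2\lesssim M+\ep$ goes through. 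Alternatively, observe that for this proposition you only need \emph{one} near-minimizer with the $L^\infty$ bound, not all of them, which gives you the freedom to work with the exact minimizer and argue via a reflection as in the paper's later Lemma~\ref{uM}.
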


\begin{proof} In view of \eqref{fam} we rewrite \eqref{enam} as 
$$\E(u) = \int_{x=0}^1\int_{t = \xe-\hal}^{\xe+\hal} \ep^2 {u''(t)}^2 +\frac1{\ep^2} W(u'(t)) + m(x) a(\om,t) u^2(t)\,dt\,dx.$$
Changing variables in the inner integral we find, letting $v(\ep t) = \ep u(t)$, 
$$\E(u) = \int_{x=0}^1\dashint_{y = x-\frac\ep2}^{x+\frac\ep2} \ep^4 {v''(y)}^2 +\frac1{\ep^2} W(v'(y)) + \frac1{\ep^2} m(x) a(\om,y/\ep) v^2(y)\,dy\,dx.$$
Then using Fubini's theorem we find that
$$ \E(u) = \int_\mr \chi_\ep(y)\(\ep^4 {v''(y)}^2 +\frac1{\ep^2} W(v'(y)) + \frac1{\ep^2} m(x) a(\om,y/\ep) v^2(y)\)\,dy,$$
where $\chi_\ep = \frac1\ep\indic_{[-\frac\ep2,\frac\ep2]}* \indic_{[0,1]}$.

Using the fact that $\indic_{[\frac\ep2,1-\frac\ep2]}\le \chi_\ep\le \indic_{[-\frac\ep2,1+\frac\ep2]},$ we easily deduce \eqref{local}.
\end{proof} 
\subsection{Verification of the hypotheses}

We check the hypotheses necessary to apply our framework.

\begin{lem}The action $\theta$ is continuous with respect to $y$ and uniformly continuous with respect to $x$ relatively to $y\in K$, for any bounded $K\subset\mr$.\end{lem}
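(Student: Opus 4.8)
The plan is to follow the pattern of the analogous lemma in Section~\ref{convex}; the argument is in fact simpler here, since $X=\lp$ is equipped with the genuine $L^2_\loc$ metric $d_X$ of the form \eqref{dp} with $p=2$ (no quotient by constants), and the action is just $\theta_y u=u(\cdot+y)$.

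First I would establish continuity in $y$. Fixing $u\in X$ and a sequence $y_n\to y$ in $\mr$, I would observe that for each fixed $R>0$ all the translated balls $B(0,R)+y_n$ sit inside one compact interval $J$, on which $u\in L^2$; extending $u|_J$ by zero and invoking continuity of translations in $L^2(\mr)$ gives $\|u(\cdot+y_n)-u(\cdot+y)\|_{L^2(B(0,R))}\to 0$. Since $R$ is arbitrary, the definition of $d_X$ then yields $d_X(\theta_{y_n}u,\theta_y u)\to 0$, which is continuity in $y$.

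Next I would prove the equicontinuity of the family $\{\theta_y\}_{y\in K}$ on $X$ (uniform continuity in $u$ relative to $y\in K$), for $K\subset B(0,M)$ bounded. The starting point is the translation-covariance obtained from the change of variables $w=z+y$:
\begin{align*}
d_X(\theta_y u,\theta_y v)&=\sum_{k\ge 1}2^{-k}\min\big(\|u-v\|_{L^2(B(y,k))},1\big)\\
&\le\sum_{k\ge 1}2^{-k}\min\big(\|u-v\|_{L^2(B(0,k+M))},1\big),
\end{align*}
the inequality using $B(y,k)\subset B(0,k+M)$ for $y\in K$. Given $\varepsilon>0$ I would pick $K_0$ with $\sum_{k>K_0}2^{-k}<\varepsilon/2$, so that the last sum is at most $\|u-v\|_{L^2(B(0,K_0+M))}+\varepsilon/2$, uniformly in $y\in K$. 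Since $B(0,K_0+M)$ is a fixed ball, the bound $2^{-(K_0+M)}\min(\|u-v\|_{L^2(B(0,K_0+M))},1)\le d_X(u,v)$ shows $\|u-v\|_{L^2(B(0,K_0+M))}$ is controlled by $d_X(u,v)$; hence $d_X(u,v)$ small forces $\sup_{y\in K}d_X(\theta_y u,\theta_y v)<\varepsilon$, which is the claim.

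The argument is entirely routine, so I do not expect a real obstacle. The only two points requiring mild care are keeping the radius of the $L^2$-ball uniform in $y$ (handled by the inclusion $B(y,k)\subset B(0,k+M)$) and truncating the series defining $d_X$ before estimating, so that the summation index $k$ does not get conflated with the growing ball radius $k+M$.
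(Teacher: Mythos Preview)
Your proposal is correct and follows essentially the same approach as the paper: both arguments reduce to the elementary inclusion $B(y,k)\subset B(0,k+M)$ for $|y|\le M$, so that $L^2$-closeness of $u$ and $v$ on the larger fixed ball controls $\|\theta_y u-\theta_y v\|_{L^2(B(0,k))}$ uniformly in $y\in K$. The only cosmetic difference is that the paper phrases the uniform-continuity part sequentially (if $u_n\to u$ in $\lp$ and $y_n\in(-R,R)$, then $\theta_{y_n}u_n-\theta_{y_n}u\to 0$ in $L^2(-R',R')$ for every $R'$), whereas you work directly with the series defining $d_X$ in $\varepsilon$--$\delta$ form; a minor caveat is that in your last inequality $K_0+M$ should be replaced by the next integer so it is an actual index in the sum, but this is harmless.
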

\begin{proof} If $y_n$ converges to $y$, $u\in X$ and $R>0$  then $u\in L^2(y-R,y+R)$ thus $\theta_{y_n} u\to\theta_{y} u$ in $L^2(-R+1,R-1)$. Since this is true for any $R$, we have $\theta_{y_n} u\to \theta_y u$ in $\lp$.

Now we prove the uniform continuity in $u$. So assume $R>0$ and let $\{y_n\}$ be a sequence in $(-R,R)$. Then if $u_n\to u$ in $\lp$, we have for any $R'>0$ that $u_n\to u$ in $L^2(-(R+R'),R+R')$ therefore
$\theta_{y_n} u_n - \theta_{y_n} u$ tends to $0$ in $L^2(-R',R')$. Since this is true for any $R'$, we obtain the convergence of $\theta_{y_n} u_n - \theta_{y_n} u$ to $0$ in $\lp$ and uniform continuity.
\end{proof}

The stationarity of the functionals is obvious. From the definition of $\theta_y f$ in \eqref{act}, we have, if $u \in H^2(I_z)$
\begin{multline*}\theta_y\f(u,z) = \int_{I_{y+z}} \ep^2 {u''(t-y)}^2 +\frac1{\ep^2} W(u'(t-y)) + m(x) a(\om,t) u^2(t-y)\,dt \\= \int_{I_z} \ep^2 {u''(t)}^2 +\frac1{\ep^2} W(u'(t)) + m(x) a(\om,t+y) u^2(t)\,dt,\end{multline*}
and since $a(\om,t+y) = a(\om+y,t)$, the right-hand side is equal to $\fy(u,z)$.

We also have 
\begin{lem}The functionals $\{\f\}$ are lower semicontinuous, and bounded below by a lower semicontinuous coercive functional $f_0$.\end{lem}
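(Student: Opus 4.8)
I would follow the same scheme as the corresponding lemma in the convex case of Section~\ref{convex}, the only genuinely new point being that the potential $W$ is not convex. This nonconvexity turns out to be harmless for the lower semicontinuity of $\f$ itself, because the term $\ep^{2}\int(u'')^{2}$ forces $C^{1}$-convergence along energy-bounded sequences; but it does matter in the choice of the coercive minorant $f_{0}$, where one must replace $W$ by its convex envelope.

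\emph{Lower semicontinuity of $\f$.} Fix $\ep,\om,x$, let $u_{n}\to u$ in $\lp$ and $y_{n}\to y$, and set $\ell:=\liminf_{n}\f(u_{n},y_{n})$; we may assume $\ell<+\infty$ and, after extraction, that $\f(u_{n},y_{n})\to\ell$. Choose open intervals $J_{k}\nearrow I_{y}$ with $\overline{J_{k}}\subset I_{y}$. For $k$ fixed and $n$ large, $J_{k}\subset I_{y_{n}}$, so by nonnegativity of the integrand $\int_{J_{k}}\bigl(\ep^{2}(u_{n}'')^{2}+\ep^{-2}W(u_{n}')+m(x)\,a(\om,\cdot)\,u_{n}^{2}\bigr)\le\f(u_{n},y_{n})$ stays bounded. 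This bounds $u_{n}''$ in $L^{2}(J_{k})$ and, via the elementary inequality $W(t)=(1-t^{2})^{2}\ge\tfrac12 t^{4}-1$, bounds $u_{n}'$ in $L^{4}(J_{k})$; together with the $\lp$-bound on $u_{n}$ and the interpolation inequality $\|u'\|_{L^{2}(J_{k})}\le C\bigl(\|u''\|_{L^{2}(J_{k})}+\|u\|_{L^{2}(J_{k})}\bigr)$, this gives a bound on $u_{n}$ in $H^{2}(J_{k})$. A subsequence then converges to $u$ weakly in $H^{2}(J_{k})$ and strongly in $C^{1}(\overline{J_{k}})$. Since $\int_{J_{k}}(u'')^{2}$ is weakly lower semicontinuous while $W(u_{n}')\to W(u')$ and $u_{n}^{2}\to u^{2}$ uniformly on $\overline{J_{k}}$, we obtain $\int_{J_{k}}\bigl(\ep^{2}(u'')^{2}+\ep^{-2}W(u')+m(x)\,a(\om,\cdot)\,u^{2}\bigr)\le\ell$. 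Letting $k\to+\infty$ by monotone convergence yields $u\in H^{2}(I_{y})$ and $\f(u,y)\le\ell$; a diagonal argument makes the successive extractions consistent.

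\emph{The coercive minorant.} Since we only care about $\ep\to 0$ we may assume $\ep\le 1$, so $\ep^{-2}\ge 1$; combined with $\alpha\le m$, $1\le a$ and $\ep^{2}(u'')^{2}\ge 0$ this gives $\f(u,y)\ge\int_{I_{y}}\bigl(W(u')+\alpha u^{2}\bigr)$ for $u\in H^{2}(I_{y})$ (and $\f=+\infty$ otherwise). Let $W^{**}$ be the convex envelope of $W$, namely $W^{**}(t)=0$ for $|t|\le 1$ and $W^{**}(t)=(1-t^{2})^{2}$ for $|t|\ge 1$; it is convex, nonnegative, and satisfies $W^{**}\le W$ and $W^{**}(t)\ge\tfrac12 t^{4}-1$. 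I would then set
$$f_{0}(u,y)=\begin{cases}\displaystyle\int_{I_{y}}W^{**}\bigl(u'(t)\bigr)+\alpha\,u^{2}(t)\,dt & \text{if }u\in W^{1,4}(I_{y}),\\ +\infty & \text{otherwise.}\end{cases}$$
Because $u\in H^{2}(I_{y})$ implies $u'\in C^{0}(\overline{I_{y}})\subset L^{4}(I_{y})$, we get $f_{0}\le\f$. Lower semicontinuity of $f_{0}$ is proved exactly as for $\f$ but now without the second-derivative term: from the $L^{4}(J_{k})$-bound on $u_{n}'$ and the $\lp$-bound on $u_{n}$ one gets $u_{n}$ bounded in $W^{1,4}(J_{k})$, hence $u_{n}'\weak u'$ in $L^{4}(J_{k})$ and $u_{n}\to u$ uniformly on $\overline{J_{k}}$; the term $\int_{J_{k}}W^{**}(u_{n}')$ is weakly lower semicontinuous precisely because $W^{**}$ is convex and nonnegative, while $\int_{J_{k}}\alpha u_{n}^{2}\to\int_{J_{k}}\alpha u^{2}$, so $k\to+\infty$ concludes. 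Finally $f_{0}$ is coercive: if $\dashint_{B_{R}}f_{0}(u,\cdot)\le C(R)$ for all $R$, then by Fubini and $W^{**}(t)\ge\tfrac12 t^{4}-1$ the function $u$ is bounded in $W^{1,4}(J)$ on every bounded interval $J$ (the $L^{4}$-bound on $u'$ controls the oscillation, the $L^{2}$-bound on $u$ the mean); since $W^{1,4}(J)$ embeds compactly into $L^{2}(J)$, a diagonal argument gives $\lp$-precompactness of the sublevel set, and its lower semicontinuity shows it is closed, hence compact.

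\textbf{Main obstacle.} The only real subtlety is the nonconvexity of $W$ in the construction of $f_{0}$: with $W$ itself in place of $W^{**}$, $f_{0}$ would fail to be lower semicontinuous, since rapid oscillations of $u_{n}'$ between $\pm 1$ satisfy $u_{n}'\weak 0$ while $\int W(u_{n}')\to 0<\int W(0)$. Passing to the convex envelope repairs this while preserving both $f_{0}\le\f$ (as $W^{**}\le W$) and coercivity (as $W^{**}$ still has quartic growth). No convexification is needed for $\f$ itself, thanks to the singular perturbation $\ep^{2}\int(u'')^{2}$.
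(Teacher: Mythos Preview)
Your argument for the lower semicontinuity of $\f$ is essentially identical to the paper's: bound in $H^{2}(J_{k})$ from the energy, compact embedding to pass the nonconvex terms, weak lower semicontinuity for $\int(u'')^{2}$, then $J_{k}\nearrow I_{y}$. (Your interpolation step is not needed, since the $W$-term already gives an $L^{4}$-bound on $u_{n}'$; but it does no harm.)

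Where you genuinely diverge from the paper is in the choice of $f_{0}$. The paper takes
\[
f_{0}(u,y)=\begin{cases}+\infty&\text{if }u\notin H^{2}(I_{y}),\\[2pt]\displaystyle\alpha\int_{I_{y}}W(u')+u^{2}&\text{if }u\in H^{2}(I_{y}),\end{cases}
\]
and asserts that its lower semicontinuity ``is proven as above''. But ``above'' the $H^{2}$-compactness came from the $\ep^{2}\int(u'')^{2}$ term, which is absent from $f_{0}$; and indeed the paper's $f_{0}$ is \emph{not} lower semicontinuous on $\lp$, for exactly the reason you identify: smoothed sawtooths $u_{n}\in H^{2}$ with $u_{n}'\approx\pm1$ converge to $0$ in $\lp$ while $f_{0}(u_{n},y)\to 0<\alpha=f_{0}(0,y)$. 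Your replacement of $W$ by $W^{**}$ (and of the $H^{2}$ domain by $W^{1,4}$) repairs this cleanly: convexity of $W^{**}$ restores weak lower semicontinuity of $\int W^{**}(u')$, the quartic growth $W^{**}(t)\ge\tfrac12 t^{4}-1$ preserves coercivity, and $W^{**}\le W$ together with $H^{2}(I_{y})\subset W^{1,4}(I_{y})$ keeps $f_{0}\le\f$. So your version is the correct one; the paper's claim about its $f_{0}$ has a small gap which your convexification fixes.
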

\begin{proof} We begin with the lower semicontinuity. Assume $u_n\to u$ in $\lp$ and $y_n\to y$. If $\liminf_n \f(u_n,y_n) = +\infty$ there is nothing to prove. Otherwise, we consider a subsequence (not relabeled) which realizes the $\liminf$, hence satisfies $\f(u_n,y_n)\to \ell\in\mr_+$. Then any interval $I$ such that $\overline I\subset I_y$ is included in $I_{y_n}$ if $n$ is large enough hence 
$$\limsup_n \int_I \lag(u_n, t)\,dt \le \limsup_n \int_{I_{y_n}} \lag(u_n, t)\,dt= \limsup_n \f(u_n,y_n) = \ell,$$
where $\lag(u,t)$ is the integrand in the integral defining $\f(u,y)$.
It follows that $\{u_n\}$ is bounded in $H^2(I)$ for any such $I$ and then that a subsequence converges weakly in $H^2(I)$ and strongly in $H^1(I)$ by compact embedding, to $u$. Moreover,
$$\int_I e_\ep^{\omega, x}(u,t)\, dt \le \liminf_n \int_{I} \lag(u_n, t)\,dt \le \ell.$$
It follows by taking a sequence $I_k\nearrow I_y$ that $u\in H^2(I_y)$ and that $\f(u,y)\le \ell$.

As a coercive lower semicontinuous functional bounding from below every $\f$ we choose 
$$ f_0(u,y) = \begin{cases} +\infty & \text{if $u\notin H^2(I_y)$}\\\displaystyle
\alpha \int_{I_y}  W(u'(t)) + u^2(t)\,dt & \text{if $u\in H^2(I_y)$.}\end{cases}$$
It is clear that $f_0$ bounds $\f$ from below. The lower semicontinuity of $f_0$ is proven as above. 

To see that $f_0$ is coercive, we assume that $\{C_R\}_{R>0}$ are arbitrary positive numbers and that $\{u_n\}_n$ is such that $\dashint_{B_R} f_0(u_n) \le C_R$ for every $R$. 
It is then straightforward to check that for any $R>0$, $\{u_n\}$ is bounded in $H^1(B_R)$, hence that a subsequence converges in $L^2(B_R)$. Using a diagonal argument we deduce the existence of a subsequence converging in $\lp$, which proves that the set of $u$'s satisfying $\dashint_{B_R} f_0(u_n) \le C_R$ for every $R$ is relatively compact, and the coercivity of $f_0$.
\end{proof}

Now we prove the less trivial two remaining properties: the (M) property, and the uniform measurability.

\begin{pro} The family $\{\f\}$ satisfies the (M) property.\end{pro}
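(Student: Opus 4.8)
The plan is to adapt the argument used to verify the (M) property for the convex family $\F_0$, the key device there being a cut-and-paste that replaces a competitor $(v,z)$ by one agreeing with the reference point $u_0$ away from the interval $I_z$ on which the functional is supported. Here this is in fact simpler, since $X=\lp$ is not a quotient by constants and both $\f$ and $f_0$ depend only on the restriction of $v$ to $I_z$, so no optimization over an additive constant is needed. Concretely, I fix $\lambda,M>0$ and $(u_0,y_0)\in X\times\mr$, set $g(v,z):=\f(v,z)+\lambda\(d_X(u_0,v)+|y_0-z|\)$, and for any $(v,z)$ introduce $\tilde v:=v$ on $I_z$ and $\tilde v:=u_0$ on $\mr\sm I_z$. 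Then $\f(\tilde v,z)=\f(v,z)$ and $u_0-\tilde v$ is supported in $I_z$, so $\|u_0-\tilde v\|_{L^2(B(0,k))}\le\|u_0-v\|_{L^2(B(0,k))}$ for every $k$ and hence $d_X(u_0,\tilde v)\le d_X(u_0,v)$; thus $g(\tilde v,z)\le g(v,z)$ and the infimum defining $R_\lambda\f(u_0,y_0)$ is unchanged if restricted to competitors with $v=u_0$ a.e. outside $I_z$.

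Assuming $R_\lambda\f(u_0,y_0)<M$, a minimizing sequence among such competitors eventually satisfies $g(v_n,z_n)<M$, which forces $|z_n-y_0|\le M/\lambda$ and $f_0(v_n,z_n)\le\f(v_n,z_n)\le M$. Hence the infimum coincides with the one over
\[
K:=\{(v,z)\in X\times\mr \ :\ v=u_0 \text{ a.e. outside } I_z,\ |z-y_0|\le M/\lambda,\ f_0(v,z)\le M\},
\]
which depends only on $M,u_0,y_0,\lambda$ and not on $\ep$, $\om$ or $x$, as required by Definition~\ref{defiM}. The remaining, and main, task is to show that $K$ is compact in $X\times\mr$.

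For that I would use that $f_0(v,z)\le M$ gives $v\in H^2(I_z)$ with $\int_{I_z}W(v')+v^2\le M/\alpha$; since $W(t)=(1-t^2)^2\ge t^2-4$ for all $t$, this yields an $H^1(I_z)$-bound on $v$ depending only on $M$ and $\alpha$ (there is no bound on $v''$, but none is needed, as we only seek compactness in $\lp$). Given $(v_n,z_n)\in K$, I extract $z_n\to z_*$, set $\hat v_n(t):=v_n(t+z_n)$ so that $\hat v_n$ is bounded in $H^1(I_0)$ with $I_0=(-\tfrac12,\tfrac12)$ and $\hat v_n=u_0(\cdot+z_n)$ outside $I_0$, and combine Rellich's compact embedding $H^1(I_0)\hookrightarrow L^2(I_0)$ with the continuity of translations on $\lp$ to obtain $\hat v_n\to\hat v$ in $\lp$, where $\hat v$ agrees with $u_0(\cdot+z_*)$ outside $I_0$; translating back gives $v_n\to v_*:=\hat v(\cdot-z_*)$ in $\lp$, with $v_*=u_0$ a.e. outside $I_{z_*}$. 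Finally the lower semicontinuity of $f_0$ established above gives $f_0(v_*,z_*)\le\liminf_n f_0(v_n,z_n)\le M$, so $(v_*,z_*)\in K$, proving compactness and hence the (M) property. I expect this compactness step to be the only delicate point — specifically the fact that the degenerate potential $W$ still controls $v'$ in $L^2$ (it is flat only near the wells $\pm1$ and superquadratic at infinity), together with the bookkeeping needed to pass to the limit across the moving interval $I_{z_n}$ via this shift-and-glue device.
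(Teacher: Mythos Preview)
Your proof is correct and follows essentially the same route as the paper's: the same cut-and-paste reduction to competitors equal to $u_0$ off $I_z$, the same compact set $K$ (the paper uses $|y-y_0|\le 2M/\lambda$ but this is cosmetic), and the same compactness argument via the $H^1(I_z)$-bound coming from $f_0$ and a shift to a fixed interval followed by Rellich. Your write-up is in fact slightly more explicit than the paper's in two places---you spell out why $W(t)\ge t^2-4$ suffices to control $v'$ in $L^2$, and you make the shift-and-glue bookkeeping for the moving interval $I_{z_n}$ explicit---but the underlying argument is identical.
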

\begin{proof} Assume $\lambda$, $M$ are positive and that $(u_0,y_0)\in \lp\times\mr$. For any $(u,y)\in\lp\times \mr$ we let 
$$v =  \begin{cases} u  & \text{on  $I_y$, }\\
 u_0 & \text{elsewhere.}\end{cases}$$
 Then 
 $$F^\lambda(u,y) := \f(u,y) + \lambda \(d_\lp(u,u_0) + |y-y_0|\) \ge  \f(v,y) + \lambda \(d_\lp(v,u_0) + |y-y_0|\).$$
 This implies that 
 $$R_\lambda \f(u_0,y_0) = \inf_{\lp\times\mr} F^\lambda = \inf_{(u,y)\in\lp\times\mr} F^\lambda(v,y).$$
 Then, $R^\lambda\f\le M$ implies that the last infimum may be taken over the set $K$ of $v$'s and $y$'s such that 
 $$|y-y_0|\le \frac{2M}\lambda,\quad f_0(v,y)\le M,\quad \text{$v=u_0$ outside $I_y$}.$$
 This set is compact: assume $\{y_n\}$ and $\{v_n\}$ satisfy the above bounds, then after extracting a subsequence we have $y_n\to y$. The bound $f_0(v_n,y_n)\le M$ implies that the norm of $v_n$ in $H^1(I_{y_n})$ is bounded, hence the norm of $\theta_{y-y_n} v_n$ is bounded in $H^1(I_y)$. Extracting again, we find that $\theta_{y-y_n} v_n$ converges weakly in $H^1(I_y)$, hence in $L^2(I_y)$. Since  $\theta_{y-y_n} v-n= \theta_{y-y_n}u_0$ outside $I_y$, we obtain the convergence of $\theta_{y-y_n} v_n$ to some $v$ in $\lp$. Since $y_n-y \to 0$  we deduce that $v_n\to v$ in $\lp$. From the lower semicontinuity of $f_0$ it is clear that $(v,y)\in K$, which is therefore compact. This proves property (M) since $K$ is independent of $\ep, x,\om$. 
 \end{proof}
 
\begin{pro} The family $\{\f\}$ is uniformly measurable with respect to $x$.\end{pro}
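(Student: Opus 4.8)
The plan is to exploit the fact that the integrand $\f$ of \eqref{fam} depends on the slow variable $x$ \emph{only} through the scalar weight $m(x)$ multiplying the term $\int_{I_z}a(\om,t)u^2(t)\,dt$; in particular $f_\ep^{\om,x+h}$ is not a genuine $\theta$-translate of $\f$, so it suffices to estimate $\sup_\om d(\f,f_\ep^{\om,x+h})$ directly. First I would record the pointwise identity, valid for $u\in H^2(I_z)$,
$$\f(u,z)-f_\ep^{\om,x+h}(u,z)=\big(m(x)-m(x+h)\big)\int_{I_z}a(\om,t)u^2(t)\,dt,$$
both sides being $+\infty$ when $u\notin H^2(I_z)$. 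Since $1\le a\le2$ and the coercive minorant satisfies $f_0(u,z)\ge\alpha\int_{I_z}u^2$ while $f_0\le\f$ and $f_0\le f_\ep^{\om,x+h}$, this gives $0\le\int_{I_z}a(\om,t)u^2(t)\,dt\le\frac2\alpha\min\big(\f(u,z),f_\ep^{\om,x+h}(u,z)\big)$, hence the \emph{multiplicative} estimate
$$\big|\f(u,z)-f_\ep^{\om,x+h}(u,z)\big|\le\frac{2\eta}\alpha\,\min\big(\f(u,z),f_\ep^{\om,x+h}(u,z)\big),\qquad \eta:=|m(x)-m(x+h)|,$$
for all $(u,z)$ (trivially when both sides are infinite).

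The next step is to propagate this to the Yosida regularizations and then to $d$. For fixed $(u_0,y_0)$, adding the nonnegative $g(v,z)=\lambda\big(d_X(u_0,v)+|y_0-z|\big)$ to both functionals, the estimate gives $\f(v,z)+g(v,z)\le(1+\tfrac{2\eta}\alpha)\big(f_\ep^{\om,x+h}(v,z)+g(v,z)\big)$ and the symmetric inequality; taking infima over $(v,z)$ yields
$$\big|R_\lambda\f(u_0,y_0)-R_\lambda f_\ep^{\om,x+h}(u_0,y_0)\big|\le\frac{2\eta}\alpha\,\min\big(R_\lambda\f(u_0,y_0),R_\lambda f_\ep^{\om,x+h}(u_0,y_0)\big).$$
Since $\big|\tfrac a{1+a}-\tfrac b{1+b}\big|=\tfrac{|a-b|}{(1+a)(1+b)}$ and $\min(a,b)\le(1+a)(1+b)$ for $a,b\ge0$, every term of the series defining $d$ in the proof of Proposition~\ref{compact} is then bounded by $\tfrac{2\eta}\alpha$, and $\sum_{\lambda,k\in\mn^*}2^{-\lambda-k}=1$ gives
$$\sup_\om d\big(\f,f_\ep^{\om,x+h}\big)\le\frac2\alpha\,|m(x)-m(x+h)|,$$
a bound that involves neither $\ep$ nor $\om$.

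Finally, for any $\delta>0$ one has the inclusion $\{x\in G:\sup_\om d(\f,f_\ep^{\om,x+h})>\delta\}\subseteq\{x\in G:|m(x)-m(x+h)|>\alpha\delta/2\}$, whose measure is at most $\tfrac2{\alpha\delta}\|m(\cdot+h)-m\|_{L^1(G)}$ by Chebyshev's inequality; since $m$ is bounded and measurable, hence in $L^1_{\loc}(\mr)$, translation is continuous in $L^1(G)$, so this tends to $0$ as $h\to0$, uniformly in $\ep$ (indeed with no $\ep$-dependence), which is \eqref{um}. I expect the only delicate point to be the propagation step: because $R_\lambda f$ can be large and $f$ itself takes the value $+\infty$, an additive perturbation bound would be useless and it is essential that the comparison of the functionals be multiplicative; the remaining ingredients are routine.
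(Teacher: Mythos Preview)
Your proof is correct and takes a genuinely different route from the paper's. The paper argues by contraposition: if $d(\f,f_\ep^{\om,x+h})>\delta$, it unwinds the definition of $d$ to locate some $(u,z)$ with $|\f(u,z)-f_\ep^{\om,x+h}(u,z)|>\eta$ and $\min(\f,f_\ep^{\om,x+h})(u,z)\le 1/\eta$; the latter bound is converted, via the one-dimensional Sobolev embedding $H^1\hookrightarrow L^\infty$, into a uniform bound $|u|\le M$ on $I_z$, yielding the additive estimate $|\f(u,z)-f_\ep^{\om,x+h}(u,z)|\le 2M^2\,|m(x+h)-m(x)|$ and hence an inclusion of the bad set into a set whose measure tends to~$0$.

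By contrast, you never invoke the embedding: you replace the additive bound by the multiplicative one $|\f-f_\ep^{\om,x+h}|\le \tfrac{2\eta}{\alpha}\min(\f,f_\ep^{\om,x+h})$, which propagates verbatim to the Yosida transforms and then, via the elementary identity $|\tfrac{a}{1+a}-\tfrac{b}{1+b}|=\tfrac{|a-b|}{(1+a)(1+b)}$, to the metric $d$ itself, giving the explicit quantitative bound $d(\f,f_\ep^{\om,x+h})\le \tfrac{2}{\alpha}|m(x)-m(x+h)|$ uniformly in $\om$ and $\ep$. This is cleaner, dimension-independent, and yields a stronger conclusion than needed; the paper's route, while more pedestrian, makes the role of the level-set structure of the metric more transparent. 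Both ultimately conclude via the $L^1$-continuity of translation applied to $m$.
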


\begin{proof} First we recall how the distance on the set of functionals $\F$ is defined in Proposition~\ref{compact}, following \cite{dalmaso}. We choose a countable dense subset $\{(u_k,y_k)\}_k \subset \lp\times\mr$, an increasing sequence $\lambda_k$ of positive numbers tending to $+\infty$, and we let for any $f,g\in \F$
$$ d(f,g) = \sum_{i,j\in\mn} \frac1{2^{i+j}} \left|\frac{R_{\lambda_i} f(u_j,y_j)}{1+ R_{\lambda_i} f(u_j,y_j)} - \frac{R_{\lambda_i} g(u_j,y_j)}{1+ R_{\lambda_i} g(u_j,y_j)} \right|.$$

Then, given $\delta>0$, there exists $k\in \mn$ such that $d(f,g)>\delta$ implies that there exists $i,j\le k$ such that 
$$
|R_{\lambda_i} f(u_j,y_j) - R_{\lambda_i} g(u_j,y_j)|>\delta/k,\quad |R_{\lambda_i} f(u_j,y_j)| \le k,\quad |R_{\lambda_i} g(u_j,y_j)|\le k.$$
Assuming without loss of generality that $R_{\lambda_i} f(u_j,y_j) > R_{\lambda_i} g(u_j,y_j)+\delta/k$ and from the definition of $R_\lambda$ in \eqref{yosida}, there exists $(u,y)\in\lp\times\mr$ such that 
\begin{multline*} g(u,y)+\lambda_i \(d_\lp(u,u_j)+|y-y_j|\) \le R_{\lambda_i} g(u_j,y_j)+\frac\delta{2k}\le R_{\lambda_i} f(u_j,y_j) - \frac\delta{2k} \\ \le f(u,y)+\lambda_i \(d_\lp(u,u_j)+|y-y_j|\) - \frac\delta{2k},\end{multline*}
so that in particular 
$$g(u,y) < f(u,y) - \frac\delta{2k},\quad g(u,y)\le k + \frac\delta{2k}, \quad |y|\le \frac1{\lambda_0} \(k + \frac\delta{2k}\) .$$

It follows from the above that   the set $A_{\ep}^h = \{x\in G\mid \exists \om,\, d(\f, f_\ep^{\om, x+h}) >\delta\}$ is included in 
\begin{multline*}B_{\ep}^h= \Big\{x\in G\mid   \text{$\exists (u,z) $, $\exists \om$ s.t. $|\f(u,z) -  f_\ep^{\om, x+h}(u,z)|> \eta$, }\\
\text{$|z|\le \frac1\eta$, and either $|f_\ep^{\om, x+h}(u,z)|<\frac1\eta$ or $|\f(u,z)|<\frac1\eta$.}
\Big\},\end{multline*}
where $\eta>0$ depends only on $\delta$. It remains to prove that $|B_{\ep}^h|$ tends to $0$ as $h\to 0$ uniformly with respect to $\ep$. For this we first note that from \eqref{trans} and the fact that $0<\alpha\le m\le \beta$ and $1\le a\le 2$, it is not difficult to deduce 
$$\text{$|f_\ep^{\om, x+h}(u,z)|<\frac1\eta$ or $|\f(u,z)|<\frac1\eta$}\quad \implies\quad \|(u,z)\|_{H^1(I_z)}\le M,$$
where $M$ is independent  of $\ep$, $y$, which in turn implies an $L^\infty$ bound by (a possibly different) $M$. Assuming this bound, we compute 
\begin{multline*}
|\f(u,z) - f_\ep^{\om, x+h}(u,z)| \le M^2  \int_{I_z} |m(x+h) - m(x)|a(\om, t+y)\,dt\\ \le 2M^2\int_{I_z} |m(x+h) - m(x)|\,dt.\end{multline*}
It follows that 
$$B_{\ep}^h\subset \left\{x\in G\mid \text{$\exists z\in[-1/\eta,\eta]$ s.t.  $\int_{I_z} |m(x+h) - m(x)|\,dt > \frac{\eta}{2M^2}$.}\right\}.$$
The measure of this set tends to $0$ as $h$ tends to $0$, and it is independent of $\ep$. This proves that $|A_{\ep}^h|\to 0$ as $h\to 0$ uniformly with respect to $\ep$, hence the uniform measurability  of $\{\f\}$.
\end{proof}

\begin{pro} We have $\f\overset{\Gamma}{\to}\fb$ uniformly w.r.t. $x,\om$, where 
\begin{equation}\label{bv} \fb(u,y) = \begin{cases} +\infty & \text{if $u'\notin \bv(I_y,\pm 1)$}\\\displaystyle
A_0\|u'\|_{\bv(I_y)} + \int_{I_y} m(x) a(\om,t) u^2(t)\,dt & \text{if $u'\in \bv(I_y,\pm 1)$.}\end{cases}\end{equation}
Here $ \bv(I_y,\pm 1)$ is the space of functions of bounded variation with values in $\{-1,+1\}$, and $A_0 = 2\int_{-1}^1\sqrt W$. 
\end{pro}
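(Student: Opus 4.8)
The plan is to verify the two sequential conditions of $\Gamma$-convergence with respect to the couple $(u,y)$ --- the $\liminf$ inequality and the existence of recovery sequences --- while tracking the uniformity in $x,\om$. Since $\F$ is compact and metrized by $d$ (Proposition~\ref{compact}), it suffices to show that for every sequence $\ep_k\to0$ and every choice of parameters $x_k,\om_k$ one has $d(f_{\ep_k}^{\om_k,x_k},\underline f^{\om_k,x_k})\to0$ along a subsequence; by the usual subsequence argument this yields $\sup_{x,\om}d(\f,\fb)\to0$, which is the meaning of uniform $\Gamma$-convergence. The key observation is that $x$ and $\om$ enter $\f$ and $\fb$ only through the weight $m(x)a(\om,t)$, which by hypotheses ii) and iii) takes values in the fixed interval $[\alpha,2\beta]$; so I would pass to a further subsequence along which $m(x_k)a(\om_k,\cdot)\weak b$ weakly-$*$ in $L^\infty_\loc(\mr)$ for some $b$ with $\alpha\le b\le2\beta$, and reduce the problem to showing that \emph{both} $f_{\ep_k}^{\om_k,x_k}$ and $\underline f^{\om_k,x_k}$ $\Gamma$-converge to the functional $f^b$ obtained from \eqref{bv} by replacing $m(x)a(\om,t)$ with $b(t)$.

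The heart of the argument is the classical Modica--Mortola analysis applied to $w:=u'$: the singular part of $\f$ is exactly $\int_{I_y}\ep^2(w')^2+\ep^{-2}W(w)\,dt$, the standard Modica--Mortola functional for $w$ with small parameter $\ep^2$, whose $\Gamma$-limit forces $w\in\{-1,1\}$ a.e. and charges each transition layer by $A_0=2\int_{-1}^1\sqrt{W}$. For the $\liminf$ inequality I would take $(u_k,y_k)\to(u,y)$ in $\lp\times\mr$ with $\sup_k f_{\ep_k}^{\om_k,x_k}(u_k,y_k)<\infty$, fix an interval $I\Subset I_y$ (so $I\subset I_{y_k}$ for $k$ large), use the energy bound to get $\{u_k\}$ bounded in $H^2(I)$, hence $u_k\to u$ in $H^1(I)$ and $w_k:=u_k'\to u'$ in $L^2(I)$, and then invoke the pointwise bound $\ep_k^2(w_k')^2+\ep_k^{-2}W(w_k)\ge2|w_k'|\sqrt{W(w_k)}=2\big|(\Phi\circ w_k)'\big|$ with $\Phi(s):=\int_0^s\sqrt{W}$; the resulting control on the total variation of $\Phi\circ w_k$ together with lower semicontinuity of the variation gives $u'\in\bv(I,\pm1)$ and $\liminf_k\int_I\ep_k^2(u_k'')^2+\ep_k^{-2}W(u_k')\,dt\ge A_0\|u'\|_{\bv(I)}$, while $u_k^2\to u^2$ in $L^1(I)$ and $m(x_k)a(\om_k,\cdot)\weak b$ give $\int_I m(x_k)a(\om_k,t)u_k^2\,dt\to\int_I b\,u^2$. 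Summing and letting $I\nearrow I_y$ (jumps of $u'$ on $\partial I_y$ do not contribute, $I_y$ being open) yields $\liminf_k f_{\ep_k}^{\om_k,x_k}(u_k,y_k)\ge f^b(u,y)$; the same computation with the first term omitted handles $\underline f^{\om_k,x_k}$.

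For the recovery part I would fix $(u,y)$ with $f^b(u,y)<\infty$, so $u'\in\bv(I_y,\pm1)$, and after a standard diagonal approximation assume $u'$ has finitely many jumps in $I_y$. Taking $y_k=y$, I would let $u_k$ equal $u$ outside $I_y$ and, inside $I_y$, replace $u'$ near each jump by the rescaled one-dimensional optimal transition profile --- the truncated solution of $w'=\ep_k^{-2}\sqrt{W(w)}$, of width $O(\ep_k^2)$. Then $u_k\in H^2(I_y)$, $\|u_k-u\|_{L^\infty(I_y)}=O(\ep_k^2)$ so $u_k\to u$ in $\lp$, the classical layer computation gives $\int_{I_y}\ep_k^2(u_k'')^2+\ep_k^{-2}W(u_k')\,dt\to A_0\|u'\|_{\bv(I_y)}$, and $\int_{I_y}m(x_k)a(\om_k,t)u_k^2\,dt\to\int_{I_y}b\,u^2$ because $u_k^2\to u^2$ uniformly while the weights converge weakly-$*$; hence $\limsup_k f_{\ep_k}^{\om_k,x_k}(u_k,y_k)\le f^b(u,y)$. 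For $\underline f^{\om_k,x_k}$ the choice $u_k=u$, $y_k=y$ already works. Together with the previous paragraph this gives $f_{\ep_k}^{\om_k,x_k}\tog f^b$ and $\underline f^{\om_k,x_k}\tog f^b$, so $d(f_{\ep_k}^{\om_k,x_k},\underline f^{\om_k,x_k})\to0$, which is what was needed.

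I expect the main obstacle not to be any individual estimate --- each is classical one-dimensional Modica--Mortola material --- but rather the bookkeeping that makes the uniformity in $(x,\om)$ genuine: the device of extracting a weak-$*$ limit $b$ of the uniformly bounded weights and verifying that the diffuse ($\ep$-level) functionals and the sharp-interface functionals $\Gamma$-converge to the \emph{same} $b$-weighted functional is what makes the compactness reduction close. A secondary point needing care is that $\Gamma$-convergence here is with respect to the couple $(u,y)$, so the integration window $I_y$ itself varies; this will be absorbed in the $\liminf$ by exhausting $I_y$ from inside by intervals $I\Subset I_y$, and in the recovery by simply keeping $y=y_k$ fixed.
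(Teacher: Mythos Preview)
Your argument is correct, but it takes a different and somewhat more elaborate route than the paper's. The paper exploits the additive structure directly: it writes $\f=g_\ep+h^{\om,x}$, where $g_\ep(u,y)=\int_{I_y}\ep^2(u'')^2+\ep^{-2}W(u')$ is the Modica--Mortola part and $h^{\om,x}(u,y)=\int_{I_y}m(x)a(\om,t)u^2(t)\,dt$ is the weighted $L^2$ part. The point is that $g_\ep$ does not depend on $(x,\om)$ at all, so its $\Gamma$-convergence to $g(u,y)=A_0\|u'\|_{\bv(I_y)}$ is trivially uniform in $(x,\om)$; and $h^{\om,x}$ does not depend on $\ep$ and is \emph{continuous} on $\lp\times\mr$ (with a modulus of continuity governed only by the uniform bound $\alpha\le m(x)a(\om,\cdot)\le 2\beta$). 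Adding a continuous perturbation preserves $\Gamma$-convergence, and the equicontinuity of the family $\{h^{\om,x}\}$ makes this uniform in $(x,\om)$. This yields the result with essentially no further work once Modica--Mortola is quoted.

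Your approach --- extracting a weak-$*$ limit $b$ of the weights $m(x_k)a(\om_k,\cdot)$ and showing that both $f_{\ep_k}^{\om_k,x_k}$ and $\underline f^{\om_k,x_k}$ $\Gamma$-converge to the same $b$-weighted sharp-interface functional --- is also valid and is in fact more robust: it would survive a situation where the weight depended (mildly) on $\ep$ as well, which the paper's split would not handle. The cost is that you have to redo the Modica--Mortola lower and upper bounds by hand and track the interaction with the varying weight, whereas the paper isolates the weight as a continuous additive correction and invokes the classical result as a black box. Your treatment of the moving window $I_{y_k}$ via exhaustion from inside, and your handling of the product limit $\int m(x_k)a(\om_k,t)u_k^2\to\int b\,u^2$ (strong $L^1$ against weak-$*$ $L^\infty$), are both fine.
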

\begin{proof}
As in \cite{am}, we will use the following well known result of Modica-Mortola \cite{mm}: 
$$ F_\ep(v) :=  \int_I \ep^2 {v'}^2 + \frac1{\ep^2} W(v)\quad \xrightarrow{\Gamma}\begin{cases} + \infty & \text{if $v'\notin \bv(I, \pm1)$}\\
 A_0 \|v\|_{\bv(I)} & \text{if $v'\in  \bv(I, \pm 1)$}\end{cases},$$
for any open interval $I$, on the space $L^1(I)$.

It is straightforward to deduce that, on $\lp$,  
$$g_\ep(u,y) := \begin{cases} +\infty & \text{if $u\notin H^2(I_y)$}\\\displaystyle
\int_{I_y} \ep^2 {u''(t)}^2 +\frac1{\ep^2} W(u'(t))\,dt & \text{if $u\in H^2(I_y)$}\end{cases}$$
$\Gamma$-converges to 
$$g(u,y) =  \begin{cases} +\infty & \text{if $u'\notin \bv(I_y,\pm 1)$}\\\displaystyle
A_0\|u'\|_{\bv(I_y)} & \text{if $u'\in \bv(I_y,\pm 1)$.}\end{cases}$$
Indeed, assume $(\ue, y_\ep)$ converges to $(u,y)$ and that $\lim_\ep g_\ep(\ue,y_\ep) = \ell\in\mr$. Then for any interval $I\Subset I_y$ the sequence $\{\ue\}$ is bounded in $H^2(I)$, hence converges after extraction in $W^{1,1}(I)$ by compact embedding, thus the derivatives of $\ue$ converge in $L^1(I)$. Then the result of Modica-Mortola implies that $\liminf g_\ep(u_\ep,y_\ep)\ge A_0\|u'\|_{\bv(I_y)}$, and since this is true for any $I\Subset I_y$ we obtain the lower bound part of the desired $\Gamma$-convergence statement. The upper-bound part is straightforward. 

We deduce that $g_\ep$ $\Gamma$-converges to $g$ on $\lp$, and since $g_\ep$ is independent of $x,\om$ the convergence is uniform w.r.t. these variables.

Now, for any $(u,y)\in\lp$,  let 
$$ h^{\om,x}(u,y) = \int_{I_y} m(x) a(\om,t)\,dt.$$
this defines  a functional which is continuous, hence lower semicontinuous, and  independent of $\ep$. Hence $h^{\om,x}$ $\Gamma$-converges to itself  uniformly with respect to $x,\om$. 

Finally, we conclude that $\f = g_\ep + h^{\om,x}$ $\Gamma$-converges as $\ep\to 0$ to $\fb = g + h^{\om,x}$ uniformly with respect to $x$, $\om$.
\end{proof}

\subsection{Lower and upper bounds}
From the results of the preceding section, the abstract framework can be applied and yields
\begin{pro}\label{lowam} Define $\f$ as in \eqref{fam}, with assumptions i), ii) and iii) there satisfied, and define $\Eo$ by \eqref{enam}. Then, assuming the action $(\om,y)\to \om+y$ is ergodic, for almost every $\om_0$, the following holds. 

Assume that $\{\ue\}_\ep$ is a family  in $\lp$ such that $\Eo(\ue) < C$. Then 
\begin{equation}\label{lbam}\liminf_{\ep\to 0} \Eo(\ue) \ge \int_G \alpha_{m(x)}\,dx,\end{equation}
where $\alpha_m$ is defined as \begin{equation}\label{alpham}\alpha_m =  \inf_{\substack{u\in\lp\\u'\in\bv_\loc(\mr,\pm 1)}}\(\limsup_{R\to+\infty} \frac1R \(A_0 \|u'\|_{\bv(-R/2,R/2)}+m \int_{-R/2}^{R/2} a(\om,t) u^2(t)\,dt\)\),\end{equation}where the r.h.s is a.e. independent of $\omega$.
\end{pro}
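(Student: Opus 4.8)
The plan is to apply the abstract lower bound of Theorem~\ref{th2} and then to identify the resulting ``infinite cell problem'' with $\alpha_{m(x)}$ of \eqref{alpham}. In the previous subsection all the hypotheses of Theorem~\ref{th2} were verified for the family $\{\f\}$ of \eqref{fam}: $\theta$ has the required continuity, $\{\f\}$ is stationary, lower semicontinuous, bounded below by the coercive lower semicontinuous functional $f_0$, has the (M) property, is uniformly measurable with respect to $x$, and $\f\overset{\Gamma}{\to}\fb$ uniformly in $x,\om$, where $\fb$ is the integrand \eqref{bv}. Since $\{\Eo(\ue)\}_\ep$ is bounded and the action is ergodic, Theorem~\ref{th2} applies and yields, for a.e.\ $\om_0$,
\begin{equation}\label{lowam:apply}
\liminf_{\ep\to 0}\Eo(\ue)\ge\dashint_G\int_\Om \Lambda(\om,x)\,d\om\,dx,\qquad \Lambda(\om,x):=\inf_{u\in\lp}\Bigl(\limsup_{R\to+\infty}\dashint_{B_R}\fb(u,y)\,dy\Bigr),
\end{equation}
where $\fb$ in the right-hand side is the integrand \eqref{bv} with $\om$ and $x$ held fixed. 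It then remains to show $\Lambda(\om,x)=\alpha_{m(x)}^\om$, where $\alpha_m^\om$ is the right-hand side of \eqref{alpham} with its $\om$-dependence displayed, and to remove that dependence.

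To compute $\Lambda(\om,x)$ I would first note that a competitor $u\in\lp$ with $\limsup_R\dashint_{B_R}\fb(u,y)\,dy<+\infty$ has $\fb(u,y)<+\infty$ for a.e.\ $y$, hence $u'\in\bv(I_y,\pm1)$ for a.e.\ $y$, so $u'\in\bv_\loc(\mr,\pm1)$; conversely every such $u$ is admissible. For such $u$, Fubini's theorem applied to the $I_y$-averages gives
\[\int_{-R}^R\|u'\|_{\bv(I_y)}\,dy=\int_\mr\bigl|\{y\in(-R,R):|y-s|<\tfrac12\}\bigr|\,d|Du'|(s),\]
and similarly for $\int_{-R}^R\int_{I_y}m(x)a(\om,t)u^2(t)\,dt\,dy$. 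The kernel in $s$ (resp.\ $t$) is $1$ on $(-R+\tfrac12,R-\tfrac12)$, vanishes outside $(-R-\tfrac12,R+\tfrac12)$, and lies in $[0,1]$ in between; hence, for the positive measures $|Du'|$ and $m(x)a(\om,t)u^2(t)\,dt$, each average is sandwiched between its value over $(-R-\tfrac12,R+\tfrac12)$ and over $(-R+\tfrac12,R-\tfrac12)$. Dividing by $|B_R|=2R$ and passing to $\limsup_R$, the boundary layers of bounded width are asymptotically negligible, since $\limsup_R\mu((-R+c,R+c))/(2R)=\limsup_R\mu((-R,R))/(2R)$ for every fixed $c$ and every positive measure $\mu$. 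Therefore
\[\limsup_{R\to+\infty}\dashint_{B_R}\fb(u,y)\,dy=\limsup_{R\to+\infty}\frac1{2R}\Bigl(A_0\|u'\|_{\bv(-R,R)}+m(x)\int_{-R}^R a(\om,t)u^2(t)\,dt\Bigr),\]
and the change of variable $S=2R$ matches the right-hand side with the bracket in \eqref{alpham}; taking the infimum over $u$ gives $\Lambda(\om,x)=\alpha_{m(x)}^\om$. One also records along the way the (routine) measurability of $(\om,x)\mapsto\alpha_{m(x)}^\om$, obtained by restricting the infimum in \eqref{alpham} to a countable dense family of competitors, e.g.\ piecewise-affine $\pm1$-slope profiles with rational break-points and rational value at the origin.

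It remains to show that, for each $m\in[\alpha,\beta]$, the map $\om\mapsto\alpha_m^\om$ equals a.e.\ a deterministic constant $\alpha_m$. By assumption iii), $a(\om+z,\cdot)=a(\om,\cdot+z)$, so the substitution $t\mapsto t+z$ (equivalently, replacing the competitor $u$ by $\theta_z u$) shows that $\alpha_m^{\om+z}$ is $\alpha_m^\om$ with the averaging window $(-R/2,R/2)$ shifted by the fixed amount $z$; as above this does not change the limit superior defining $\alpha_m^\om$, so $\alpha_m^{\om+z}=\alpha_m^\om$ for all $z\in\mr$. With ergodicity and the measurability just noted, $\alpha_m^\om$ is then a.e.\ constant, say $\alpha_m$, so $\int_\Om\alpha_{m(x)}^\om\,d\om=\alpha_{m(x)}$ for a.e.\ $x$. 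Inserting this into \eqref{lowam:apply} and using $|G|=|(0,1)|=1$ (so $\dashint_G=\int_G$) gives
\[\liminf_{\ep\to 0}\Eo(\ue)\ge\dashint_G\int_\Om\alpha_{m(x)}^\om\,d\om\,dx=\dashint_G\alpha_{m(x)}\,dx=\int_G\alpha_{m(x)}\,dx,\]
which is \eqref{lbam}. The step I expect to be the main obstacle is the identification $\Lambda(\om,x)=\alpha_{m(x)}^\om$: one must show that the local $I_y$-averaging built into $\fb$ does not alter the value of the infinite-cell problem, and the accompanying boundary-layer and $\limsup$ bookkeeping, together with the measurability of $\om\mapsto\alpha_m^\om$, is where care is needed.
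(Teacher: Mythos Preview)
Your proposal is correct and follows the same route as the paper: apply Theorem~\ref{th2} (together with Proposition~\ref{th1}) to the family $\{\f\}$ and then identify the resulting cell quantity with $\alpha_{m(x)}$. The paper's own proof is two lines, simply stating that the abstract lower bound, ``in view of \eqref{bv}, is precisely \eqref{lbam}.''

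Your write-up supplies the details the paper leaves implicit: the Fubini/boundary-layer computation showing that the $I_y$-averaging built into $\fb$ disappears in the $\limsup$, the explicit ergodicity argument yielding $\omega$-independence of $\alpha_m^\om$, and the measurability of $(\om,x)\mapsto\alpha_{m(x)}^\om$. One cosmetic difference: the paper invokes the freedom (granted by Proposition~\ref{th1}) to use the Vitali family $(-R/2,R/2)$, so that the factor $1/R$ in \eqref{alpham} appears directly, whereas you use $B_R=(-R,R)$ and then substitute $S=2R$; both are fine.
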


\begin{proof} From Proposition~\ref{th1} and Theorem~\ref{th2} we have for almost every $\om_0$ 
$$\liminf_{\ep\to 0} \Eo(\ue) \ge \int_0^1\int\( \inf_{u\in\lp}\(\limsup_{R\to+\infty} \dashint_{-R/2}^{R/2} \fb(u,y)\,dy\)\)\,d\om\,dx,$$
which in view of \eqref{bv} is precisely \eqref{lbam}.
\end{proof}

In the remainder of the paper we complement this with an upper bound to obtain

\begin{theo} \label{minam} With the assumptions and notations of the previous theorem, we have for almost every $\om$ the following expansion for the minimum of the energy $\eam$ defined in \eqref{eam} as $\ep\to 0$:
\begin{equation}\label{fin}\min_{H^2([0,1])}\eam = \int_0^1 \alpha_{m(x)}\,dx + o(1).\end{equation}
\end{theo}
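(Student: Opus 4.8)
The plan is to combine the lower bound already at hand --- Proposition~\ref{lowam}, applied together with the reduction \eqref{local} --- with a matching upper bound obtained by an explicit construction of quasi-minimizers. Fix once and for all an $\om$ in the full-measure set on which Proposition~\ref{lowam} holds (with $\om_0=\om$) and on which the infimum in \eqref{alpham} realizes the deterministic value $\alpha_m$ for every $m$ in a countable set $D\subset[\alpha,\beta]$ specified below; since the function $\om\mapsto\inf_u\limsup_R(\dots)$ defining $\alpha_m$ is exactly invariant under all the translations $\om\mapsto\om+y$, the near-optimal profiles we build below may be transplanted into windows arbitrarily far from the origin without any loss. For the lower bound, \eqref{local} gives $\min_{H^2([0,1])}\eam=\min_{u}\E(u)+o(1)$, so it suffices to bound $\min_u\E(u)$ below; picking $\ue$ with $\E(\ue)\le\min_u\E(u)+\ep$ and using the upper bound below to ensure $\sup_\ep\E(\ue)<\infty$, Proposition~\ref{lowam} yields $\liminf_{\ep\to0}\E(\ue)\ge\int_0^1\alpha_{m(x)}\,dx$, hence $\liminf_{\ep\to0}\min_{H^2([0,1])}\eam\ge\int_0^1\alpha_{m(x)}\,dx$.

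The bulk of the work is thus the upper bound: exhibiting $\{\ue\}\subset\lp$ with $\limsup_{\ep\to0}\E(\ue)\le\int_0^1\alpha_{m(x)}\,dx$. Fix $\delta>0$. A short computation (using $A_0\|u'\|_\bv\ge0$ and $1\le a\le2$ in \eqref{alpham}) shows $\alpha_{m_2}\le\alpha_{m_1}m_2/m_1$ for $\alpha\le m_1\le m_2\le\beta$, so $m\mapsto\alpha_m$ is Lipschitz on $[\alpha,\beta]$; let $D$ be a finite $\delta$-net of $[\alpha,\beta]$. For each $m\in D$ choose in \eqref{alpham} a profile $u^m\in\lp$, $(u^m)'\in\bv_\loc(\mr,\pm1)$, with $\limsup_R\frac1R\bigl(A_0\|(u^m)'\|_{\bv(-R/2,R/2)}+m\int_{-R/2}^{R/2}a(\om,t)(u^m)^2\bigr)\le\alpha_m+\delta$ and $\|u^m\|_\infty\le H$ for some $H=H(\delta)$ (a tall excursion being heavily penalized by the $\int a(u^m)^2$ term, truncating costs no more than $\delta$ of energy density). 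Then replace $m$ by a step function $\tilde m=\sum_{i=1}^{N_\delta}m_i\1_{J_i}$, with $\{J_i\}$ a partition of $[0,1]$ into finitely many intervals of fixed length, $m_i\in D$, and $\|m-\tilde m\|_{L^1([0,1])}\le\delta/H^2$: this makes $|\int_0^1\alpha_{\tilde m}-\int_0^1\alpha_m|\le C\delta$ and, on any competitor with sup-norm $\le H$, changes $\E$ by at most $2H^2\|m-\tilde m\|_{L^1}\le2\delta$.

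Now define $U_\ep$ on the $t$-range $[0,1/\ep]$ of \eqref{fam} by placing on each window $[x_{i-1}/\ep,x_i/\ep]$ associated with $J_i=[x_{i-1},x_i]$ a suitably translated copy of $u^{m_i}$, interposing between consecutive windows a connecting segment of bounded $t$-length whose derivative stays in $\{-1,+1\}$, and finally smoothing every kink --- the $O(\ep^{-1})$ kinks of the profiles and the $O(N_\delta)$ ones created at the junctions --- at the Modica--Mortola scale $\ep^2$, so that $\ue:=U_\ep\in\lp\cap H^2_\loc$. Plugging $\ue$ into $\E$, running the Fubini manipulation already used for \eqref{local}, and using the near-optimality of each $u^{m_i}$ over the windows $[x_{i-1}/\ep,x_i/\ep]$ (whose lengths $|J_i|/\ep\to\infty$), one gets
\[
\E(\ue)\ \le\ \sum_{i=1}^{N_\delta}|J_i|\,(\alpha_{m_i}+2\delta)+o_\ep(1)\ =\ \int_0^1\alpha_{\tilde m(x)}\,dx+2\delta+o_\ep(1)\ \le\ \int_0^1\alpha_{m(x)}\,dx+C'\delta+o_\ep(1),
\]
the $o_\ep(1)$ gathering the boundary layers near $\partial[0,1]$ and near the $N_\delta$ jumps of $\tilde m$, the energy of the $O(N_\delta)$ connecting segments, and the smoothing excess --- each $O(\ep N_\delta)$ or exponentially small for $N_\delta$ fixed. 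Letting $\ep\to0$, then $\delta\to0$, gives $\limsup_{\ep\to0}\min_u\E(u)\le\int_0^1\alpha_{m(x)}\,dx$; together with the lower bound and \eqref{local} this is \eqref{fin}.

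The main obstacle I expect is the upper-bound construction, specifically the patching of the near-optimal profiles across the $N_\delta$ interfaces of $\tilde m$: one must connect a near-optimal sawtooth for $m_i$ to one for $m_{i+1}$ while keeping $U'_\ep$ valued in $\{-1,+1\}$ away from the $O(\ep^2)$ transition layers --- otherwise the term $\ep^{-2}\int W(U'_\ep)$ over even a bounded $t$-interval is already of order $\ep^{-1}$ and destroys the estimate --- and with the connecting region costing only $O(\ep)$ in energy. A secondary technical point is the uniformity with respect to $\om$, handled as above by fixing a good $\om$ and exploiting the strict translation invariance of the functional defining $\alpha_m$, so that the chosen profiles remain near-optimal in every far-off window.
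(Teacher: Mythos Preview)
Your overall strategy matches the paper's: cite Proposition~\ref{lowam} for the lower bound and construct a recovery sequence for the upper bound by piecing together near-optimal sawtooth profiles over a partition of $[0,1]$, then smoothing the kinks at the Modica--Mortola scale. The difference, and the real gap, is in how you handle the randomness.

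You fix $\om$ once, choose for each $m\in D$ a single profile $u^m$ near-optimal in \eqref{alpham}, and then claim that ``the strict translation invariance of the functional defining $\alpha_m$'' lets you transplant $u^m$ into any far-off window without loss. This is where the argument breaks. The functional in \eqref{alpham} is \emph{not} translation-invariant in $u$: if you place a shifted copy $u^m(\cdot-c)$ on a window centered at $c$, the energy you compute there involves $\int a(\om,t)\,u^m(t-c)^2\,dt=\int a(\om+c,s)\,u^m(s)^2\,ds$, i.e.\ the profile $u^m$ tested against the \emph{different} environment $a(\om+c,\cdot)$. Ergodicity tells you only that $\alpha_m$ is the same deterministic number for a.e.\ $\om'$; it does \emph{not} say that the particular near-minimizer $u^m$ you chose for $\om$ is also near-minimizing for $\om+c$. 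In general it will not be, since the optimal sawtooth adapts to the local structure of $a$. So the key inequality $\E(\ue)\le\sum_i|J_i|(\alpha_{m_i}+2\delta)+o_\ep(1)$ is unjustified as written.

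The paper resolves this with a genuinely different mechanism: rather than fixing one global profile per $m$, it (i) works with \emph{finite-interval minimizers} of $f^{\om}_{R,m}$ defined in \eqref{form}, (ii) uses an Egorov-type step to find a set $\Om_\delta\subset\Om$ of measure $>1-\delta/k$ on which the convergence $\inf_u f^{\om}_{R,m_i}\to\alpha_{m_i}$ is uniform in $\om$, and (iii) uses the ergodic theorem to show that for a.e.\ $\om$ and small $\ep$ one can perturb the window centers to $\tilde x_i$ with $\om+\tilde x_i/\ep\in\Om_\delta$. Only then does one obtain, on each window, a minimizer whose energy is provably $\le\alpha_{m_i}+\delta/k$. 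A secondary point you gloss over---a lower bound on the spacing between successive kinks, needed so that the $\ep^2$-smoothings do not overlap and so that the $\int a\,u^2$ term is unaffected---is handled in the paper via the minimality of these finite-interval minimizers (Lemma~\ref{uM} and Step~7 of the proof); for your non-minimizing profiles $u^m$ no such bound is available, though that particular issue could be repaired by a pruning argument.
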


\begin{proof} Only the upper bound needs to be proved. This is done by constructing a test-function for the sharp-interface energy. Let  
\begin{equation}\label{X} X = \{v:\mr\to \mr\mid v'\in \bv_\loc(\mr,\pm1)\}\end{equation}
be the space of so-called saw-tooth functions on $\mr$. We prove below that 
for any $\delta>0$ and almost every $\om\in\Om$, there exists for any $\ep>0$ some $\ve\in X$ such that 
\begin{equation}\label{lowdelta} \FH(\ve):=\ep A_0 \|\ve'\|_{\bv([0,1])} + \frac1{\ep^2}\int_0^1 m(x) a\(\om,\frac x\ep\) \ve^2(x)\,dx \le \int_0^1 \alpha_{m(x)}\,dx +\delta.\end{equation}
Moreover, one can choose $\ve\in X$ such that the spacing between two successive jumps  in the derivative of $\ve$ is bounded below by $M \ep$ for some $M>0$ which is independent of $\ep$.

Before proving this fact, we note that it is then straightforward  to derive a corresponding upper-bound for the soft-interface energy $\eam$: If $\ve'$ experiences a jump from $-1$ to $+1$ at $x_0$, say, then we let, for $y\in[-1/\ep,1/\ep]$, 
$$\tve(x_0+\ep^3 y) = \int_0^y\tanh(t/\sqrt 2)\, dt,$$
and glue these transitions so that $\tve'$ is almost constant on the remainder of $[0,1]$. We omit details but it is straightforward to check  that, as $\ep\to 0$, 
$$ \eam(\tve)\le \FH(\ve) + o(1).$$
Therefore for any $\delta>0$ and almost every $\om\in\Om$, there exists for any $\ep>0$ small enough some $\tve$ such that 
$$\eam(\tve) \le \int_0^1 \alpha_{m(x)}\,dx +\delta,$$
proving that \eqref{fin} holds for a.e. $\om$.

It remains to construct for a.e. $\om$, any $\delta>0$ and any $\ep>0$ small enough some $\ve\in X$ satisfying \eqref{lowdelta} and such that the spacing between two successive jumps  in the derivative of $\ve$ is bounded below by $M\ep$ for some $M>0$ which is independent of $\ep$.\medskip

{\em Step 1:}  As a first step,  given  $\delta>0$ we may choose  $\eta>0$ small enough so that $\FH(\ve)$ and $\int_0^1 \alpha_{m(x)}\,dx$ change by at most $\delta/3$ if we replace $m(x)$  by a function  $\mt(x)$, such that  $\|\mt - m\|_\infty < \eta $. This is clearly possible, we omit the proof of this fact. Then we choose an integer $k$ large enough so that the oscillation of $m$ on an interval of size $2/k$ is at most $\eta$, and we let $m_i = m(x_i)$, where $x_i = (i-1)/k + 1/2k$ for any $1\le i \le k$. \medskip

{\em Step 2:} Given $\om$, $R$, $m$ and $u\in X$ we define 
\begin{equation}\label{form}\form(u) = \frac1R \(A_0 \|u'\|_{\bv(-R/2,R/2)}+m \int_{-R/2}^{R/2} a(\om,t) u^2(t)\,dt\).\end{equation}
Using  \eqref{alpham}, for any $m$ and almost every $\om$ 
$$\limsup_{R\to+\infty} \(\inf_{u\in X}  \form(u) \) \le \inf_{u\in X} \(\limsup_{R\to+\infty}  \form(u) \)\le  \alpha_m.$$
Then for any $m$ there exists a set of $\om$'s of measure arbitrarily close to $1$ such that the limit in the left-hand side is uniform w.r.t to $\om$ belonging to this set. Applying this property to $m_1,\dots m_k$ there exists $\Om_\delta\subset \Om$ such that 
\begin{equation}\label{omdelta} \text{$\displaystyle \limsup_{R\to+\infty} \(\inf_{u\in X}  \formi(u) \)$ is uniform w.r.t $\om\in \Om_\delta$ for any $i$},\quad |\Om_\delta|> 1 - \frac\delta{k}.\end{equation}\medskip

{\em Step 3:} Next we use the ergodicity of the action $(\om,x)\to \om +x$ to find that for a.e. $\om$ it holds that 
$$\lim_{R\to +\infty} \frac{\left|\{y\in[0,R]\mid \om+y\in\Om_\delta\}\right|}R = |\Om_\delta|.$$
Applying this to $R = 1/\ep$ we find that for a.e. $\om$, if $\ep>0$ is small enough then for $i=1,\dots,k$ there exists $\txi$ such that
\begin{equation}\label{txi}\om + \frac\txi\ep\in\Om_\delta,\quad |\txi-x_i|<\frac\delta k.\end{equation}\medskip

{\em Step 4:} The building blocks of our construction are now available. For almost every $\om$, given $\ep>0$ small enough we have points $\txi$ satisfying \eqref{txi}. Then applying \eqref{omdelta} we may take $\ep>0$ smaller if necessary such that the minimizer $u_{\ep,i}$ of $f^{\om+\frac\txi\ep}_{\frac1{k\ep},i}(u)$ is such that 
$$f^{\om+\frac\txi\ep}_{\frac1{k\ep},i}(u_{\ep,i})\le \alpha_{m_i} + \frac\delta k.$$
The last step is to glue these pieces together to get a test function. This requires the following lemma and corollary.

\begin{lem}\label{uM} Assume $u$ is a minimizer of $\form$ on $X$, with $R>1$ and $0<\alpha\le m\le \beta.$ then there is a constant depending only on $\alpha$, $\beta$ such that $|u|\le M$ on $[-R/2,R/2]$.
\end{lem}
\begin{proof} Let $t_0\in[-R,R]$ be a point where $|u|$ achieves its maximum $A$, which we assume to be at least $1$.  We assume $u(t_0)$ to be positive and let $[s,s']$ be the connected component of $t_0$ in the set $\{u>A-1\}$. Then we define $\tilde u$ to be equal to $u$ outside $[s,s']$ and equal to $(A-1) - (u+1-A)_+$ on $[s,s'].$ It is straightforward to check that the $\bv$ norm of $\tilde u$ is no greater than that of $u$ plus $2$, and that 
$$\int_s^{s'} m a(\om,t){\tilde u(t)}^2 \,dt \le \int_s^{s'} m a(\om,t){u(t)}^2 \,dt - \alpha\frac{A-1}2,$$
where we have used the fact that $a(\om,t)\ge 1$.

The minimality of $u$ then implies that $A$ is bounded by a constant depending only on $\alpha$.
\end{proof}

\begin{coro} Let $X_R$ be the set of $u\in X$ such that $u(\pm R/2) = 0$. Then assuming $0<\alpha\le m\le \beta$ and $R>1$ we have $\inf_{u\in X_R}\form(u)\le \inf_{u\in X} \form(u) + C/R$, where $C$ depends only on $\alpha,\beta$.
\end{coro}

\begin{proof} The conclusion is trivial if $R<M(\alpha,\beta)$ since in this case the function $u(x) = |x| - R/2$ provides a bound for $\inf_{u\in X_R}\form(u)$. Thus we may assume $R>M$ for any $M$ depending only on $\alpha,\beta$.

Let $u$ be a minimizer of $\form(u)$ on $X$.  We modify $u$ so that $u(\pm R/2) = 0$, and we choose to focus on $u(-R/2) = 0$ alone. The idea is to replace it by an affine function of slope  $1$ which vanishes on the endpoint, for points which are on the left of the crossing between the graph of $u$ and that of the affine function.  
If $u(-R/2) = 0$, then we are done. Otherwise let $$f(t) = \frac{|u(-R/2+t)|}t.$$
Then $f(t)$ tends to $+\infty$ as $t\to 0^+$ and since from Lemma \ref{uM}, $|u|$ is bounded by $M$ we have $f(M)\le 1$. Therefore there exists $t\in[0,M]$ such that $f(t) = 1$ and we may modify $u$ by letting $u(-R/2+ s) = s u(-R/2+t)/t$ if $s \in [0, t]$ (and leaving $u$ unchanged otherwise)  to obtain a function whose derivative belongs to $\{\pm 1\}$ and is zero at $-R/2$. Moreover, since the modification happens on an interval of length bounded by $M$,  one may easily check that we have increased $\form(u)$ by at most a constant depending only on $\alpha,\beta$ in this process, which proves the corollary.
\end{proof}\medskip 

{\em Step 5:}  We may now glue together the functions $u_{\ep,i}$ of the preceding step, or rather we replace $u_{\ep,i}$ by the minimizer $\tuei$ of $f^{\om+\frac\txi\ep}_{\frac1{k\ep},i}$ on $X_{\frac1{k\ep}}$.  Using the corollary above  we have 
$$f^{\om+\frac\txi\ep}_{\frac1{k\ep},i}(\tuei)\le \alpha_{m_i} + \frac\delta k + k\ep M.$$
These functions are equal to zero for $t = \pm 1/2k\ep$ therefore we may define  a test map $\ve\in X$ as follows.  On each of the intervals $[\txi - 1/2k, \txi +1/2k]$ we let 
\begin{equation}\label{bb}\ve(x) =\ep  \tuei\(\frac{x- \txi}\ep\).\end{equation}
The resulting function is defined on $[\tilde x_1 - 1/2k,\tilde x_k + 1/2k]$, which may differ slightly from $[0,1]$ because $\txi - x_i$ may differ by an amount $\delta/k$. If a piece of $[0,1]$ is missing, its size is at most $\delta/k$ and we may define $\ve$ to be a standard sawtooth function of period $\ep$ there.\medskip

{\em Step 6:}  We may now estimate the energy of $\ve$, or rather 
$$\Et(\ve) := \ep A_0 \|v'\|_{\bv([0,1])} + \frac1{\ep^2}\int_0^1 \tilde m(x) a\(\om,\frac x\ep\) v^2(x)\,dx.$$
Indeed from Step~1, it suffices to prove \eqref{lowdelta} to show that 
\begin{equation}\label{lowtilde} \Et(\ve)\le \int_0^1 \alpha_{\tilde m(x)}\,dx + 2\delta.\end{equation}
There are several terms which add up in $\Et(\ve)$. First there is the energy of each of the pieces $\tuei$, for $i = 1\dots k$. Changing variables $t = (x-\tilde x_i)/\ep$ we have 
\begin{multline*}\ep A_0 \|{\ve}'\|_{\bv([\txi - \frac1{2k},\txi + \frac1{2k})]} + \frac1{\ep^2}\int_{\txi - 1/2k}^{\txi + 1/2k} \tilde m(x) a\(\om,\frac x\ep\) {\ve}^2(x)\,dx
 = \ep A_0 \|{\tuei}'\|_{\bv([ -\frac1{2k\ep}, \frac1{2k\ep}])}  + \\ +\ep \int_{- 1/2k\ep}^{ 1/2k\ep} m_i  a\(\om+\frac\txi\ep,t\) {\tuei(t)}^2\,dt = \frac1k f^{\om+\frac\txi\ep}_{\frac1{k\ep},i}(\tuei)\le\frac1k  \alpha_{m_i} + \frac\delta{k^2} + \ep M.\end{multline*}
 Then there are the jumps in the derivative at each of the point $\txi\pm1/2k$, which account for a term bounded by $2 \ep A_0 k$. Finally there is the energy of the standard sawtooth function of period $\ep$ on $[0,1]\setminus [\tilde x_1 - 1/2k,\tilde x_k + 1/2k]$, which is bounded by $C\delta/k$.
 
 We deduce that 
 $$\Et(\ve)\le \frac1k  \sum_{i=1}^k \alpha_{m_i} + \frac\delta{k} + k \ep M + C\frac\delta k,$$
 which proves \eqref{lowtilde} and then \eqref{lowdelta}.\medskip
 
 {\em Step 7:}  It remains to prove that the space between successive jumps in the derivative of $\ve$ are bounded below by  $\eta\ep$ for some $\eta>0$. In view of our construction, this amounts  to show that this holds for our building blocks defined by \eqref{bb}, and then to prove that the spacing between two successive jumps in the the derivative of a minimizer of $\form$ is bounded below by a constant $\eta>0$ independent of $R>1$, $m$ satisfying $0<\alpha\le m\le \beta$, and $\om$. 

Let $u$ be such a minimizer (with the continuous line graph). Let $x_0$, $x_0+\eta$ be  two consecutive jumps in the derivative of $u$, assuming $u'=+1$ on $(x_0,x_0+\eta)$. We define the competitor  $\tu$ (with the dashed-line graph) to be equal to $u$ for $x<x_0$ and to be equal to $u(x+\eta) - \eta$ for $x>x_0$. Clearly, if $x_1$ is the jump following $x_0+\eta$ (or $x_1 = R$ if $x_0+\eta$ is the last jump) then $\tu(x) = u(x)$ for $x>x_1$. Moreover,  using Lemma~\ref{uM} we find that $x_1 - x_0 \le 2M$ and that  
$$\int_{x_0}^{x_1} m a(\om,t) \tu^2(t)\,dt \le \int_{x_0}^{x_1} m a(\om,t) u^2(t)\,dt + C M^2 \eta.$$
Since $\tu$ has one less jump than $u$ we deduce from the minimality of $u$ that 
$$0\le R(\form(\tu) - \form(u))\le C M^2 \eta - A_0,$$
from which we find a lower bound for $\eta$ as desired.

This finishes the proof that $\ve$ satisfies \eqref{lowtilde} and has minimal spacing between jumps bounded below by $\eta\ep$, and the proof of the theorem.
\end{proof}
{\bf Acknowledgments:} The authors wish to thank the referee for his/her careful reading of the manuscript and comments. They also wish to thank V.Bergelson and E.Lesigne for pointing them to references \cite{nz} and \cite{krengel}. The work of L.B. and E.S. was partially supported by NSF grants DMS-1405769 and DMS-1106666.

\end{document}